\providecommand{\U}[1]{\protect\rule{.1in}{.1in}}
\providecommand{\U}[1]{\protect\rule{.1in}{.1in}}
\providecommand{\U}[1]{\protect\rule{.1in}{.1in}}
\providecommand{\U}[1]{\protect\rule{.1in}{.1in}}
\providecommand{\U}[1]{\protect\rule{.1in}{.1in}}
\providecommand{\U}[1]{\protect\rule{.1in}{.1in}}
\providecommand{\U}[1]{\protect\rule{.1in}{.1in}}
\newtheorem{theorem}{Theorem}[section]
\newtheorem{algorithm}[theorem]{Algorithm}
\newtheorem{corollary}[theorem]{Corollary}
\newtheorem{definition}[theorem]{Definition}
\newtheorem{example}[theorem]{Example}
\newtheorem{lemma}[theorem]{Lemma}
\newtheorem{proposition}[theorem]{Proposition}
\newtheorem{remark}[theorem]{Remark}
\newtheorem{remarks}[theorem]{Remarks}
\newcommand{\casetwoother}[3]{\left\{ \begin{array}{ll} #1 &\mbox{if $#2$} \\[1mm] #3 &\mbox{otherwise}\,. \end{array} \right.}
\newcommand{\casethree}[6]{\left\{ \begin{array}{ll} #1 &\mbox{if $#2$} \\[1mm] #3 &\mbox{if $#4$} \\[1mm]#5 &\mbox{if $#6$}\,. \end{array} \right.}
\newlength{\cellsize}
\newcommand\tableau[1]{
\vcenter{
\let\\=\cr
\baselineskip=-16000pt
\lineskiplimit=16000pt
\lineskip=0pt
\halign{&\tableaucell{##}\cr#1\crcr}}}
\newcommand{\tableaucell}[1]{{%
\def \arg{#1}\def \void{}%

\ifx \void \arg
\vbox to \cellsize{\vfil \hrule width \cellsize height 0pt}%
\else
\unitlength=\cellsize
\begin{picture}(1,1)
\put(0,0){\makebox(1,1){$#1$}}
\put(0,0){\line(1,0){1}}
\put(0,1){\line(1,0){1}}
\put(0,0){\line(0,1){1}}
\put(1,0){\line(0,1){1}}
\end{picture}%
\fi}}
\begin{document}
\title[Combinatorics of generalized exponents]{Combinatorics of generalized exponents}
\date{July, 2017}
\author{C\'edric Lecouvey and Cristian Lenart}
\subjclass[2010]{05E10, 17B10}

\begin{abstract}
We give a purely combinatorial proof of the positivity of the stabilized forms
of the generalized exponents associated to each classical root system.\ In
finite type $A_{n-1}$, we rederive the description of the generalized
exponents in terms of crystal graphs without using the combinatorics of
semistandard tableaux or the charge statistic. In finite type $C_{n}$, we
obtain a combinatorial description of the generalized exponents based on the so-called 
distinguished vertices in crystals of type $A_{2n-1}$, which we also connect
to symplectic King tableaux. This gives a combinatorial proof of the positivity
of Lusztig $t$-analogues associated to zero weight spaces in the irreducible
representations of symplectic Lie algebras. We also present three applications of our combinatorial formula, and discuss some implications to relating two type $C$ branching rules. Our methods are expected to extend
to the orthogonal types.
\end{abstract}

\maketitle

\section{Introduction}

Let $\mathfrak{g}$ be a simple Lie algebra over $\mathbb{C}$ of rank $n$ and
$G$ its corresponding Lie group.\ The group $G$ acts on the symmetric algebra
$S(\mathfrak{g})$ of $\mathfrak{g}$, and it was proved by Kostant\ \cite{kost}
that $S(\mathfrak{g})$ factors as $S(\mathfrak{g})=H(\mathfrak{g}%
)\otimes S(\mathfrak{g})^{G}$, where $H(\mathfrak{g})$ is the harmonic part of
 $S(\mathfrak{g)}$. The generalized exponents of
$\mathfrak{g}$, as defined by Kostant \cite{kost}, are the polynomials appearing as the coefficients in the
expansion of the graded character of $H(\mathfrak{g)}$ in the basis of the
Weyl characters.\ It was shown by Hesselink \cite{hes} that these polynomials
coincide, in fact, with the Lusztig $t$-analogues of zero weight multiplicities
in the irreducible finite-dimensional representations of $\mathfrak{g}$. In
particular, they have non-negative integer coefficients, because they are affine
Kazhdan-Lusztig polynomials (see \cite{NR}). Note that the zero weight Lusztig $t$-analogues are the most complex ones. 

For $\mathfrak{g}=\mathfrak{sl}_{n}$, the generalized exponents admit a nice
combinatorial description in terms of the Lascoux-Sch\"{u}tzenberger charge
statistic on semistandard tableaux of zero weight \cite{LSc1}. This
statistic is defined via the cyclage operation on tableaux, which is based on the Schensted
insertion scheme. This combinatorial description extends, in fact, to any
Lusztig $t$-analogue of type $A_{n-1}$, that is possibly associated to a
nonzero weight (also called Kostka polynomials).\ So we have a purely
combinatorial proof of the positivity of their coefficients. It was also
established in \cite{NY} that the Lusztig $t$-analogues in type $A_{n-1}$ are
one-dimensional sums, i.e., some graded multiplicities related to finite-dimensional
representations of quantum groups of affine type $A_{n-1}^{(1)}$. Another
interpretation of the charge statistic in terms of crystals of type $A_{n-1}$
was given later by Lascoux, Leclerc and Thibon in \cite{LLT}.

Despite many efforts during the last three decades, no general combinatorial
proof of the positivity of the Lusztig $t$-analogues is known beyond type $A$.
Nevertheless such proofs have been obtained in some particular cases.\ Notably,
a combinatorial description of the generalized exponents associated to small
representations was given in \cite{Ion} and \cite{Ion2} for any root system.
In \cite{LecShi}, it was established that some Lusztig $t$-analogues of
classical types equal one-dimensional sums for affine quantum groups, which 
generalizes the result of \cite{NY}. Nevertheless, the two families of polynomials
do not coincide beyond type $A$. In \cite{lec} and \cite{Lec2}, charge
statistics based on cyclage on Kashiwara-Nakashima tableaux were defined for classical
types, yielding the desired positivity for particular Lusztig $t$%
-analogues. It is worth mentioning that, in type $C_{n}$, a version of the mentioned 
statistic \cite{lec} permits conjecturally to describe all the Lusztig $t$-analogues in this case.

In \cite{bry}, Brylinsky obtained an algebraic proof of the positivity of any
Lusztig $t$-analogue based on the filtration by a central idempotent of
$\mathfrak{g}$.\ For classical types, this filtration stabilizes
\cite{Hanlon,Lec4}, which yields stabilized versions of these
polynomials. They are formal series in the variable $t$ which, in many respects, 
are more tractable as their finite rank counterparts.

\bigskip

The goal of this paper is twofold. First we give a combinatorial description
of the stabilized version of the generalized exponents and a proof of their
positivity by using the combinatorics of type $A_{+\infty}$ crystal graphs.
This can be regarded as a generalization of results in \cite{LLT} for the
weight zero, and in fact we were able to rederive the latter without any reference to
the charge statistic or the combinatorics of semistandard tableaux. Our
description is in terms of the so-called distinguished vertices in crystal of type
$A_{+\infty}$, but we show that these vertices are in natural bijection with
some generalizations of symplectic King tableaux, which makes the link with stable
Lusztig $t$-analogue more natural. Next, we provide a complete combinatorial
proof of the positivity of the generalized exponents in the non-stable $C_{n}$
case.\ Observe there that the non-stable case is much more involved than the
stable one, essentially because we need a combinatorial description of the non-Levi 
branching from $\mathfrak{gl}_{2n}$ to $\mathfrak{sp}_{2n}$, which is
complicated in general.\ Here we use in a crucial way recent duality results
by Kwon \cite{kwoces,kwoldk} giving a crystal interpretation of the
previous branching and a combinatorial model relevant to its study. We also rely on the complex combinatorics of the bijections realizing the symmetries of type $A$ Littlewood-Richardson coefficients: the combinatorial $R$-matrix and the conjugation symmetry map; both have many different realizations in the literature. We
strongly expect to extend our approach to orthogonal types as soon as all the
results of \cite{kwoldk} will be available for the non-Levi orthogonal branchings.

\bigskip

The paper is organized as follows. In Section~2, we recall the definition of
the generalized exponents and show that, for classical types, they satisfy 
important relations in the ring of formal series in $t$ deduced from Cauchy
and Littlewood identities. In Section~3, we briefly rederive the combinatorial
description of the generalized exponents in type $A_{n-1}$ obtained in
\cite{LLT} without using the results of \cite{LSc1} on the charge. Section~4
is devoted to the combinatorial description of the stabilized form of the
generalized exponents in terms of distinguished tableaux, which we define and study
here. Our approach also permits to extend our results to multivariable versions of the generalized exponents as done in \cite{LLT} for type $A$. In Sections 5 and 6, we give the promised combinatorial description of
the generalized exponents in type $C_{n}$ by using distinguished tableaux
adapted to the finite rank $n$. In Section~5 we do this based on the type $C_{n}$ branching rule due to Sundaram \cite{Sun}, whereas in Section~6 we use Kwon's branching rule; the latter leads to a more explicit description, including one in terms of the symplectic King
tableaux \cite{King}. In Section~7, we give three applications of the description in Section~6: (1) analyzing the growth of the generalized exponents of type $C_n$ with respect to the rank $n$; (2) proving a conjecture related to the construction of the type $C_{n}$ charge in \cite{lec}; (3) determining the smallest power of $t$ in a generalized exponent (note that the largest one is well-known).  The third result turns out to be quite subtle, and it illustrates the combinatorial complexity
of these polynomials. Finally, in Section~8 we raise a question about the possible relationship between the Sundaram and Kwon branching rules.

\bigskip

\noindent\textbf{Acknowledgments:} Both authors thank the RiP program of the
Institut Henri Poincar\'{e} for its invitation to Paris in July 2017, when
this work was completed. The second author was also partially supported by the NSF grant DMS--1362627. We are grateful to Olga Azenhas, for her detailed comments on which Section~\ref{lrcs} is based, to Jae-Hoon Kwon and Sheila Sundaram for valuable input, as well as to Bogdan Ion for asking the question which led to our third application in Section~7. 

\section{Generalized exponents}

\subsection{Background}

Let $\mathfrak{g}_{n}$ be a simple Lie algebra over $\mathbb{C}$ of rank $n$
with triangular decomposition
\[
\mathfrak{g}_{n}\mathfrak{=}\bigoplus\limits_{\alpha\in R_{+}}\mathfrak{g}%
_{\alpha}\oplus\mathfrak{h}\oplus\bigoplus\limits_{\alpha\in R_{+}%
}\mathfrak{g}_{-\alpha}\,,%
\]
so that $\mathfrak{h}$ is the Cartan subalgebra of $\mathfrak{g}_{n}$ and
$R_{+}$ its set of positive roots. The root system $R=R_{+}\sqcup(-R_{+})$ of
$\mathfrak{g}_{n}$ is realized in a real Euclidean space $E$ with inner
product $(\cdot,\cdot)$.\ For any $\alpha\in R,$ we write $\alpha^{\vee}%
=\frac{2\alpha}{(\alpha,\alpha)}$ for its coroot. Let $S\subset R_{+}$ be the
subset of simple roots and $Q_{+}$ the cone generated by $S$. The set $P$ of
integral weights for $\mathfrak{g}_{n}$ satisfies $(\beta,\alpha^{\vee}%
)\in\mathbb{Z}$ for any $\beta\in P$ and $\alpha\in R$. We write
$P_{+}=\{\beta\in P\mid(\beta,\alpha^{\vee})\geq0$ for any $\alpha\in S\}$ for
the cone of dominant weights of $\mathfrak{g}_{n}$, and denote by $\omega
_{1},\ldots,\omega_{n}$ its fundamental weights. Let $W$ be the Weyl group of
$\mathfrak{g}_{n}$ generated by the reflections $s_{\alpha}$ with $\alpha\in
S$, and write $\ell$ for the corresponding length function.

The graded character of the symmetric algebra $S(\mathfrak{g}_{n})$ of
$\mathfrak{g}_{n}$ is defined by%
\[
\mathrm{char}_{t}(S(\mathfrak{g}_{n}))=\prod_{\delta\text{ weight of
}\mathfrak{g}_{n}}\frac{1}{1-te^{\delta}}=\frac{1}{(1-t)^{n}}\prod_{\alpha\in
R}\frac{1}{1-te^{\alpha}}\,.
\]
By a classical theorem due to Kostant, the graded character of the harmonic
part of the symmetric algebra $S(\mathfrak{g}_{n})$ satisfies%
\[
\mathrm{char}_{t}(H(\mathfrak{g}_{n}))=\frac{\prod_{i=1}^{n}(1-t^{d_{i}}%
)}{(1-t)^{n}}\prod_{\alpha\in R}\frac{1}{1-te^{\alpha}}=\prod_{i=1}%
^{n}(1-t^{d_{i}})\mathrm{char}_{t}(S(\mathfrak{g}_{n}))\,,
\]
where we have $d_{i}=m_{i}+1$, for $i=1,\ldots,n$, and $m_{1},\ldots,m_{n}$
are the (classical) exponents of $\mathfrak{g}_{n}$. On the other hand, it is known (see
\cite{hes}) that $\mathrm{char}_{t}(H(\mathfrak{g}_{n}))$ coincides with the
Hall-Littlewood polynomial $Q_{0}^{\prime}$, namely we have
\begin{equation*}
\mathrm{char}_{t}(H(\mathfrak{g}_{n}))=Q_{0}^{\prime}=W_{0}(t)\prod_{\alpha\in
R}\frac{1}{1-te^{\alpha}}=\sum_{\lambda\in P_{+}}K_{\lambda,0}^{\mathfrak{g}%
_{n}}(t)s_{\lambda}^{\mathfrak{g}_{n}}\,,%
\end{equation*}
where
\[
W_{0}(t)=\sum_{w\in W}t^{\ell(w)}\,,%
\]
and $s_{\lambda}^{\mathfrak{g}_{n}}$ is the Weyl character associated to the
finite-dimensional irreducible representation $V(\lambda)$ of $\mathfrak{g}%
_{n}$ with highest weight $\lambda$. In particular, we have the identity%
\[
W_{0}(t)=\prod_{i=1}^{n}\frac{1-t^{d_{i}}}{1-t}\,.
\]

The polynomials $K_{\lambda,0}^{\mathfrak{g}_{n}}(t)$ are the generalized 
exponents of $\mathfrak{g}_{n}$, and they coincide with the Lusztig
$t$-analogues associated to the zero weight subspaces in the representations
$V(\lambda)$. We thus have
\[
K_{\lambda,0}^{\mathfrak{g}_{n}}(t)=\sum_{w\in W}(-1)^{\ell(w)}\emph{P}%
_{t}(w(\lambda+\rho)-\rho)\,,
\]
where $\rho$ is half the sum of the positive roots, and $\emph{P}_{t}$ is the
$t$-Kostant partition function defined by
\[
\prod_{\alpha\in R_{+}}\frac{1}{1-te^{\alpha}}=\sum_{\beta\in Q_{+}}%
\emph{P}_{t}(\beta)e^{\beta}\,.
\]
The classical exponents $m_1,\ldots,m_n$ correspond to the adjoint representation of $\mathfrak{g}_{n}$, namely we have
\[K_{\widetilde{\alpha},0}^{\mathfrak{g}_{n}}(t)=\sum_{i=1}^n t^{m_i}\,,\]
where $\widetilde{\alpha}$ is the highest root in $R_+$. 

\subsection{Classical types}

Recall the values of the classical exponents in types $A-D$: %
\[%
\begin{array}
[c]{cc}%
\text{type }X & \text{exponents}\\
A_{n-1} & 1,2,\ldots,n-1\\
B_{n} & 1,3,\ldots,2n-1\\
C_{n} & 1,3,\ldots,2n-1\\
D_{n} & 1,3,\ldots,2n-3\text{ and }n-1.
\end{array}
\]

In classical types, $\mathrm{char}_{t}(S(\mathfrak{g}_{n}))$ is easy to
compute. Let $\mathcal{P}_{n}$ be the set of partitions with at most $n$ parts, 
and $\mathcal{P}$ the set of all partitions. The rank of the partition $\gamma$ is defined as the sum of its parts, and is denoted by 
$\left\vert \gamma\right\vert$.

In type $A_{n-1}$, we start from the Cauchy identity%
\[
\prod_{1\leq i,j\leq n}\frac{1}{1-tx_{i}y_{j}}=\sum_{\gamma\in\mathcal{P}_{n}%
}t^{\left\vert \gamma\right\vert }s_{\gamma}(x)s_{\gamma}(y)\footnote{Here
$s_{\nu}(x)$ stands for the ordinary Schur function in the variables
$x_{1},\ldots,x_{n}$.}.
\]
By setting $y_{i}=\frac{1}{x_{i}}$ for any $i=1,\ldots,n$, and by considering the
images of the symmetric polynomials in $R^{A_{n-1}}=\mathrm{Sym}[x_{1}%
,\ldots,x_{n}]/(x_{1}\cdots x_{n}-1)$, we get
\begin{align}
\mathrm{char}_{t}(S(\mathfrak{sl}_{n}))&=(1-t)\sum_{\gamma\in\mathcal{P}_{n}%
}t^{\left\vert \gamma\right\vert }s_{\gamma}(x)s_{\gamma}(x^{-1}%
)=(1-t)\sum_{\gamma\in\mathcal{P}_{n}}t^{\left\vert \gamma\right\vert
}s_{\gamma}s_{\gamma^{\ast}}=\label{charSA}\\
&=(1-t)\sum_{\gamma\in\mathcal{P}_{n}}t^{\left\vert \gamma\right\vert }%
\sum_{\lambda\in\mathcal{P}_{n-1}}c_{\gamma,\gamma^{\ast}}^{\lambda}%
s_{\lambda}(x)\,.\nonumber
\end{align}
Here $\gamma^{\ast}=-w_{\circ}(\gamma)$, where $w_{\circ}$ is the permutation of
maximal length in $S_{n}$, and we use the same notation for a symmetric
polynomial and its image in $R^{A_{n-1}}$.\ Recall also that the partitions of
$\mathcal{P}_{n-1}$ are in one-to-one correspondence with the dominant weights
of $\mathfrak{sl}_{n}.$ More precisely, we associate to the dominant weight
$a_{1}\omega_{1}+\cdots+a_{n-1}\omega_{n-1}$ the partition $\lambda=(1^{a_{1}},\ldots,(n-1)^{a_{n-1}})'$, where $\mu'$ denotes the conjugate of $\mu$. It is also worth mentioning here that two partitions
in $\mathcal{P}_{n}$ whose conjugates have the same parts less than $n$ correspond to the
same dominant weight of $\mathfrak{sl}_{n}$. So the coefficients
$c_{\gamma,\gamma^{\ast}}^{\lambda}$ are not properly Littlewood-Richardson
coefficients, but only tensor multiplicities corresponding to the decomposition
of $V(\gamma)\otimes V(\gamma^{\ast})$ into irreducible components. Similarly
$s_{\lambda}(x)$ is not properly a Schur polynomial but belongs to $R^{A_{n-1}}$
(see Section~\ref{Sec_chcrys}).

\bigskip

For any positive integer $m$, define $\mathcal{P}_{m}^{(2)}$ as the set of
partitions of the form $2\kappa$ with $\kappa\in\mathcal{P}_{m}$, and
$\mathcal{P}_{m}^{(1,1)}$ as the subset of $\mathcal{P}_{m}$ containing the
partitions of the form $(2\kappa)^{\prime}$ with $\kappa\in\mathcal{P}$. 

In type $B_{n}$, we start from the Littlewood identity \cite{Li}
\[
\prod_{1\leq i<j\leq2n+1}\frac{1}{1-ty_{i}y_{j}}=\sum_{\nu\in\mathcal{P}%
_{2n+1}^{(1,1)}}t^{\left\vert \nu\right\vert /2}s_{\nu}(y)\,,
\]
and we specialize $y_{2n+1}=1,y_{2i-1}=x_{i}$, and $y_{2i}=\frac{1}{x_{i}}$, for
any $i=1,\ldots,n$.\ This gives
\[
\mathrm{char}_{t}(S(\mathfrak{so}_{2n+1}))=\sum_{\nu\in\mathcal{P}%
_{2n+1}^{(1,1)}}t^{\left\vert \nu\right\vert /2}\sum_{\lambda\in
\mathcal{P}_{n}}c_{\nu}^{\lambda}(\mathfrak{so}_{2n+1})s_{\lambda
}^{\mathfrak{so}_{2n+1}}\,,%
\]
where $c_{\nu}^{\lambda}(\mathfrak{so}_{2n+1})$ is the branching coefficient
corresponding to the restriction from $\mathfrak{gl}_{2n+1}$ to $\mathfrak{so}%
_{2n+1}$. Similarly, we can consider the identities%
\[
\prod_{1\leq i<j\leq2n}\frac{1}{1-ty_{i}y_{j}}=\sum_{\nu\in\mathcal{P}%
_{2n}^{(1,1)}}t^{\left\vert \nu\right\vert /2}s_{\nu}(y)\;\;\;\text{ and }\;\;\;%
\prod_{1\leq i\leq j\leq2n}\frac{1}{1-ty_{i}y_{j}}=\sum_{\nu\in\mathcal{P}%
_{2n}^{(2)}}t^{\left\vert \nu\right\vert /2}s_{\nu}(y)\,.
\]
They permit to write%
\begin{align*}
\mathrm{char}_{t}(S(\mathfrak{sp}_{2n})) &  =\sum_{\nu\in\mathcal{P}%
_{2n}^{(2)}}t^{\left\vert \nu\right\vert /2}\sum_{\lambda\in\mathcal{P}_{n}%
}c_{\nu}^{\lambda}(\mathfrak{sp}_{2n})s_{\lambda}^{\mathfrak{sp}_{2n}}\;\;\;\text{ and}\\
\mathrm{char}_{t}(S(\mathfrak{so}_{2n})) &  =\sum_{\nu\in\mathcal{P}%
_{2n}^{(1,1)}}t^{\left\vert \nu\right\vert /2}\sum_{\lambda\in\mathcal{P}_{n}%
}c_{\nu}^{\lambda}(O_{2n})s_{\lambda}^{O_{2n}}\,.
\end{align*}
Here we should, in fact, consider the character $s_{\lambda}^{O_{2n}}$ of
the $O(2n)$-module $V^{O(2n)}(\lambda)$ parametrized by the partition
$\lambda$.\ When $\lambda_{n}=0$, we have $s_{\lambda}^{O_{2n}}=s_{\lambda
}^{\mathfrak{so}_{2n}}$. Nevertheless, when $\lambda_{n}>0$, $V^{O(2n)}%
(\lambda)$ decomposes as the sum of two irreducible $SO(2n)$-modules whose
highest weights correspond via the Dynkin diagram involution $\iota$ flipping
the nodes $n-1$ and $n.$ For $1\leq i\leq n-1$, define $a_{i}$ as the number
of columns of height $i$ in $\lambda$, and $a_{n}=2\lambda_{n}+a_{n-1}$. We
then have
\begin{equation}
s_{\lambda}^{O_{2n}}=s_{\omega(\lambda)}^{\mathfrak{so}_{2n}}+s_{\iota
(\omega(\lambda))}^{\mathfrak{so}_{2n}}\label{CharO(2n))}\,,%
\end{equation}
where $\omega(\lambda)=\sum_{i=1}^{n}a_{i}\omega_{i}$.

Since we have%
\[
\mathrm{char}_{t}(H(\mathfrak{g}_{n}))=\frac{\prod_{i=1}^{n}(1-t^{d_{i}}%
)}{(1-t)^{n}}\prod_{\alpha\in R}\frac{1}{1-te^{\alpha}}=\prod_{i=1}%
^{n}(1-t^{d_{i}})\mathrm{char}_{t}(S(\mathfrak{g}_{n}))\,,
\]
we can write
\begin{equation}
\frac{1}{\prod_{i=1}^{n}(1-t^{d_{i}})}\mathrm{char}_{t}(H(\mathfrak{g}%
_{n}))=\sum_{\lambda\in P_{+}}\frac{K_{\lambda,0}(t)}{\prod_{i=1}%
^{n}(1-t^{d_{i}})}s_{\lambda}^{\mathfrak{g}_{n}}=\mathrm{char}_{t}%
(S(\mathfrak{g}_{n}))\,\text{.}\label{(H)}%
\end{equation}
So we get the following simple expressions for the formal series
$\frac{K_{\lambda,0}(t)}{\prod_{i=1}^{n}(1-t^{d_{i}})}$.

\begin{proposition}\label{br-rules}
We have the following identities.

\begin{enumerate}
\item In type $A_{n-1}$, for any $\lambda\in\mathcal{P}_{n-1}$, we have%
\begin{equation}
\frac{K_{\lambda,0}^{\mathfrak{sl}_{n}}(t)}{\prod_{i=1}^{n}(1-t^{i})}%
=\sum_{\gamma\in\mathcal{P}_{n}}t^{\left\vert \gamma\right\vert }%
c_{\gamma,\gamma^{\ast}}^{\lambda}\,.\footnote{The factor $(1-t)$ in
(\ref{charSA}) gives the missing \textquotedblleft$d_{i}=1$\textquotedblright%
\ in type $A_{n-1}.$}\label{relA}%
\end{equation}

\item In type $B_{n},$ for any $\lambda\in\mathcal{P}_{n}$, we have
\[
\frac{K_{\lambda,0}^{\mathfrak{so}_{2n+1}}(t)}{\prod_{i=1}^{n}(1-t^{2i})}%
=\sum_{\nu\in\mathcal{P}_{2n+1}^{(1,1)}}t^{\left\vert \nu\right\vert /2}%
c_{\nu}^{\lambda}(\mathfrak{so}_{2n+1})\,.\footnote{Here that the partition
$\lambda$ can have an odd rank.}%
\]

\item In type $C_{n}$, for any $\lambda\in\mathcal{P}_{n}$, we have
\[
\frac{K_{\lambda,0}^{\mathfrak{sp}_{2n}}(t)}{\prod_{i=1}^{n}(1-t^{2i})}%
=\sum_{\nu\in\mathcal{P}_{2n}^{(2)}}t^{\left\vert \nu\right\vert /2}c_{\nu
}^{\lambda}(\mathfrak{sp}_{2n})\,.
\]

\item In type $D_{n}$, for any $\lambda\in\mathcal{P}_{n}$, we have
\[
\frac{K_{\lambda,0}^{O(2n)}(t)}{(1-t^{n})\prod_{i=1}^{n-1}(1-t^{2i})}%
=\sum_{\nu\in\mathcal{P}_{2n}^{(1,1)}}t^{\left\vert \nu\right\vert /2}c_{\nu
}^{\lambda}(O_{2n})\,.
\]

\end{enumerate}
\end{proposition}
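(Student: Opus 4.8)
The plan is to prove all four identities at once by extracting coefficients of Weyl characters from the master relation (\ref{(H)}) and comparing with the type-specific expansions of $\mathrm{char}_t(S(\mathfrak{g}_n))$ computed above from the Cauchy and Littlewood identities. Indeed, (\ref{(H)}) asserts
\[
\mathrm{char}_t(S(\mathfrak{g}_n)) = \sum_{\lambda \in P_+} \frac{K_{\lambda,0}(t)}{\prod_{i=1}^n (1-t^{d_i})}\, s_\lambda^{\mathfrak{g}_n}\,,
\]
so once we have a second expansion of the same formal series in the (linearly independent) basis of Weyl characters $s_\lambda^{\mathfrak{g}_n}$, the coefficients of each $s_\lambda^{\mathfrak{g}_n}$ must agree. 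The four identities are then simply the statements that $\frac{K_{\lambda,0}(t)}{\prod_{i=1}^n (1-t^{d_i})}$ equals the corresponding branching (or tensor) generating function, after inserting the correct values of the $d_i$ read off from the table of classical exponents.

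First I would handle types $B$ and $C$, which are the cleanest: here the exponents $1,3,\ldots,2n-1$ give $d_i = m_i+1 = 2i$, so $\prod_{i=1}^n(1-t^{d_i}) = \prod_{i=1}^n(1-t^{2i})$, and the expansions displayed above for $\mathrm{char}_t(S(\mathfrak{so}_{2n+1}))$ and $\mathrm{char}_t(S(\mathfrak{sp}_{2n}))$ are already written in the basis $\{s_\lambda^{\mathfrak{g}_n}\}$. Matching the coefficient of $s_\lambda^{\mathfrak{g}_n}$ on the two sides yields parts (2) and (3) verbatim. Type $A_{n-1}$ requires one bookkeeping remark: the exponents $1,2,\ldots,n-1$ give $d_1,\ldots,d_{n-1} = 2,3,\ldots,n$, hence $\prod_{i=1}^{n-1}(1-t^{d_i}) = \prod_{i=2}^n(1-t^i)$, and the extra factor $(1-t)$ in (\ref{charSA}) supplies precisely the missing $(1-t^1)$, so that $\prod_{i=1}^n(1-t^i) = (1-t)\prod_{i=1}^{n-1}(1-t^{d_i})$. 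Comparing the coefficient of $s_\lambda(x)$ in (\ref{charSA}) with that in (\ref{(H)}) and cancelling the common factor $(1-t)$ then gives (\ref{relA}).

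The type $D_n$ case is the subtle one, and I expect it to be the main obstacle, because the natural Littlewood expansion is in terms of the $O(2n)$-characters $s_\lambda^{O_{2n}}$ rather than genuine $SO(2n)$ Weyl characters: by (\ref{CharO(2n))}) each $s_\lambda^{O_{2n}}$ with $\lambda_n>0$ is a sum of two distinct $\mathfrak{so}_{2n}$ characters. Thus the delicate point is that the quantities $K_{\lambda,0}^{O(2n)}(t)$ must be defined so that (\ref{(H)}) holds with respect to the $O(2n)$-character expansion, and one must verify that the $s_\lambda^{O_{2n}}$ remain linearly independent over $\mathbb{Z}[[t]]$ before invoking coefficient comparison. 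Granting this, the type $D$ exponents $1,3,\ldots,2n-3$ together with $n-1$ give $d_i = 2i$ for $1\le i\le n-1$ and $d_n = n$, so the normalizing product is $(1-t^n)\prod_{i=1}^{n-1}(1-t^{2i})$; matching the coefficient of $s_\lambda^{O_{2n}}$ produces part (4). Apart from this linear-independence check and the $O(2n)$-versus-$SO(2n)$ bookkeeping, the entire argument is a routine comparison of coefficients.
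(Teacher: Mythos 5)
Your treatment of parts (1)--(3) is exactly the paper's argument: expand $\mathrm{char}_t(S(\mathfrak{g}_n))$ once via Kostant's formula as in (\ref{(H)}), once via the specialized Cauchy/Littlewood identities recorded before the proposition, and compare coefficients in the basis of Weyl characters; even the type $A$ bookkeeping, where the factor $(1-t)$ from (\ref{charSA}) supplies the missing $d_i=1$ so that $\prod_{i=1}^{n}(1-t^{i})=(1-t)\prod_{i=1}^{n-1}(1-t^{d_{i}})$, is the paper's own remark (see the footnote to (\ref{relA})). For these three assertions your proposal is complete and coincides with the paper.

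For part (4), however, there is a genuine gap, and it sits exactly at the point you chose to ``grant''. The identity (\ref{(H)}) is an expansion in the irreducible $\mathfrak{so}_{2n}$-characters $s_{\omega}^{\mathfrak{so}_{2n}}$, with $\omega$ ranging over all dominant weights, whereas the Littlewood expansion is in the $O(2n)$-characters, and by (\ref{CharO(2n))}) one has $s_{\lambda}^{O_{2n}}=s_{\omega(\lambda)}^{\mathfrak{so}_{2n}}+s_{\iota(\omega(\lambda))}^{\mathfrak{so}_{2n}}$ when $\lambda_n>0$. The $\mathbb{Z}[[t]]$-span of the $s_{\lambda}^{O_{2n}}$ is a \emph{proper} subspace of the span of the $s_{\omega}^{\mathfrak{so}_{2n}}$, so one cannot simply ``define $K_{\lambda,0}^{O(2n)}(t)$ so that (\ref{(H)}) holds with respect to the $O(2n)$-character expansion'': a priori $\mathrm{char}_t(S(\mathfrak{so}_{2n}))$ need not lie in that subspace at all, and such a definition would in any case empty assertion (4) of its content, which is a statement about genuine Lusztig $t$-analogues of $\mathfrak{so}_{2n}$. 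What makes the regrouping legitimate is the missing fact
\[
K_{\omega,0}^{\mathfrak{so}_{2n}}(t)=K_{\iota(\omega),0}^{\mathfrak{so}_{2n}}(t)\,,
\]
where $\iota$ is the Dynkin diagram involution exchanging the nodes $n-1$ and $n$: since $\iota$ permutes the positive roots and fixes the weight $0$, it preserves the $t$-Kostant partition function and the alternating sum defining the $t$-analogue. With this symmetry, each pair $K_{\omega,0}^{\mathfrak{so}_{2n}}(t)s_{\omega}^{\mathfrak{so}_{2n}}+K_{\iota(\omega),0}^{\mathfrak{so}_{2n}}(t)s_{\iota(\omega)}^{\mathfrak{so}_{2n}}$ collapses to a single multiple of $s_{\lambda}^{O_{2n}}$, and this common coefficient is precisely the paper's definition of $K_{\lambda,0}^{O(2n)}(t)$; only then does your coefficient comparison go through. (By contrast, the linear-independence point you flag is immediate: distinct partitions $\lambda\in\mathcal{P}_n$ give $O(2n)$-characters supported on disjoint sets of irreducible $SO(2n)$-characters.)
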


For type $D_{n}$, the dominant weights appearing in (\ref{(H)}) are not
necessarily partitions, whereas this is the case in Assertion 4 of the previous
proposition. So here we have in fact to write
\begin{align*}
\sum_{\omega\in P_{+}}\frac{K_{\omega,0}^{\mathfrak{so}_{2n}}(t)}{\prod
_{i=1}^{n}(1-t^{d_{i}})}s_{\omega}^{\mathfrak{so}_{2n}}&=\sum_{\lambda\in
P_{+},\lambda\in\mathcal{P}_{n-1}}\frac{K_{\lambda,0}^{\mathfrak{so}_{2n}}%
(t)}{\prod_{i=1}^{n}(1-t^{d_{i}})}s_{\lambda}^{O_{2n}}+\\
&+\sum_{\omega\in
P_{+},\omega\notin\mathcal{P}_{n-1}}\frac{K_{\omega,0}^{\mathfrak{so}_{2n}%
}(t)s_{\omega}^{\mathfrak{so}_{2n}}+K_{\iota(\omega),0}^{\mathfrak{so}_{2n}%
}(t)s_{\iota(\omega)}^{\mathfrak{so}_{2n}}}{\prod_{i=1}^{n}(1-t^{d_{i}})}\\
&=\sum_{\lambda\in\mathcal{P}_{n}}\frac{K_{\lambda,0}^{O(2n)}(t)}{\prod
_{i=1}^{n}(1-t^{d_{i}})}s_{\lambda}^{O_{2n}}\,,%
\end{align*}
where
\[
K_{\lambda,0}^{O(2n)}(t)=K_{\omega(\lambda),0}^{\mathfrak{so}_{2n}%
}(t)=K_{\iota(\omega(\lambda)),0}^{\mathfrak{so}_{2n}}(t)
\]
for any partition $\lambda\in\mathcal{P}_{n}\setminus\mathcal{P}_{n-1}$ and
$\omega(\lambda)$ defined as in (\ref{CharO(2n))}).

\bigskip

The notation $K_{\lambda,0}^{\mathfrak{sl}_{n}}(t)$ is a little unusual in
type $A_{n-1}$, where the polynomials $K_{\lambda,0}^{\mathfrak{sl}_{n}}(t)$
coincide with the Kostka polynomials, which are usually labeled by pairs of
partitions with the same rank (that is, by using the weights of $\mathfrak{gl}%
_{n}$ rather than those of $\mathfrak{sl}_{n}$). When $K_{\lambda
,0}^{\mathfrak{sl}_{n}}(t)\neq0$, the rank of $\lambda$ should in particular
be a multiple of $n$. Also the sum in the right-hand side of Assertion 1 is in
fact infinite. Indeed, to the weight $\lambda$ correspond an infinite
number of partitions, since adding columns of height $n$ to a Young diagram
does not modify the corresponding weight of $\mathfrak{sl}_{n}$.

We have then by a theorem of Lascoux and Sch\"{u}tzenberger \cite{LSc1}
\[
K_{\lambda,0}^{\mathfrak{sl}_{n}}(t)=\sum_{T\in SST(\lambda)_{0}%
}t^{\mathrm{ch}_{n}(T)}\,,%
\]
where $SST(\lambda)_{0}$ is the set of semistandard tableaux labeled by
letters of $\{1<\cdots<n\}$ of weight $\mu=(a,\ldots,a)=0$ (i.e. each letter
$i$ appear $a$ times in $T$) where $a=\left\vert \lambda\right\vert/n$, and $\mathrm{ch}_{n}(T)$ is the charge statistic
evaluated on $T$.\ Recall that this charge statistic is defined by rather
involved combinatorial operation such as cyclage on tableaux.

\subsection{Stable versions}

\label{Section_Stable}When the ranks of the classical root systems considered
go to infinity, the previous relations simplify. In particular, for $n$ sufficiently large, we have
\[
c_{\nu}^{\lambda}(\mathfrak{so}_{2n+1})=\sum_{\delta\in\mathcal{P}}%
c_{\lambda,2\delta}^{\nu}\,,\;\;\;c_{\nu}^{\lambda}(\mathfrak{sp}_{2n}%
)=\sum_{\delta\in\mathcal{P}}c_{\lambda,(2\delta)^{\prime}}^{\nu}\,,\;\;\text{ and }\;\;c_{\nu}^{\lambda}(\mathfrak{so}_{2n})=\sum_{\delta\in\mathcal{P}}%
c_{\lambda,2\delta}^{\nu}\,.
\]
Observe that, for $\mathfrak{g}=\mathfrak{so}_{2n+1}$, this implies in
particular that $c_{\nu}^{\lambda}(\mathfrak{so}_{2n+1})=0$ when the
ranks of $\lambda$ and $\nu$ do not have the same parity, which is false in
general. Thus we get the relations%
\begin{align*}
\frac{K_{\lambda,0}^{B_\infty}(t)}{\prod_{i=1}^{\infty}(1-t^{2i})}&=\sum_{\nu
\in\mathcal{P}^{(1,1)}}\sum_{\delta\in\mathcal{P}^{(2)}}t^{\left\vert
\nu\right\vert /2}c_{\lambda,\delta}^{\nu}\;\;\;\text{ in type }B_{\infty}\text{
when }\left\vert \lambda\right\vert \text{ is even}\,,\\
\frac{K_{\lambda,0}^{C_\infty}(t)}{\prod_{i=1}^{\infty}(1-t^{2i})}&=\sum_{\nu
\in\mathcal{P}^{(2)}}\sum_{\delta\in\mathcal{P}^{(1,1)}}t^{\left\vert
\nu\right\vert /2}c_{\lambda,\delta}^{\nu}\;\;\;\text{ in type }C_{\infty}\,,\\
\frac{K_{\lambda,0}^{D_\infty}(t)}{\prod_{i=1}^{\infty}(1-t^{2i})}&=\sum_{\nu
\in\mathcal{P}^{(1,1)}}\sum_{\delta\in\mathcal{P}^{(2)}}t^{\left\vert
\nu\right\vert /2}c_{\lambda,\delta}^{\nu}\;\;\;\text{ in type }D_{\infty}\,.
\end{align*}
In particular, this gives
\begin{equation}
K_{\lambda,0}^{B_\infty}(t)=K_{\lambda,0}^{D_\infty}(t)\;\;\text{ and }\;\;K_{\lambda,0}%
^{B_\infty}(t)=K_{\lambda^{\prime},0}^{C_\infty}(t)\,.\label{StableRedC}%
\end{equation}
All these stabilized forms are in fact formal power series in $t$ equal to zero when
the rank of $\lambda$ is odd (see \cite{Lec4}). The previous identities permit
to restrict to the study of the stabilized formal series $K_{\lambda,0}^{C_\infty}(t)$ when
$\lambda$ runs over the set of partitions with even rank. We are going to see
that stabilized form of the generalized exponents are easier to handle than their finite rank
counterparts.\ Observe also that stabilized versions of Lusztig $t$-analogues
\cite{Lec4} exist in general (that is, for non-zero weights) in connection with 
the stabilization of the Brylinski filtration. Finally, in type $A$, the Kostka
polynomials $K_{\lambda,0}^{\mathfrak{sl}_{n}}(t)$ stabilize to zero when $n$
becomes greater than the rank of $\lambda$.

\section{Charge in type $A_{n-1}$ and crystal graphs}

\label{Sec_chcrys}We are now going to explain how the interpretation of the
charge for zero weight tableaux in terms of crystals obtained in \cite{LLT}
naturally emerges from (\ref{relA}), without any reference to cyclage. In
particular, we obtain a direct proof of the positivity of the polynomials
$K_{\lambda,0}^{A_{n-1}}(t)$; for simplicity, we drop the superscript $A_{n-1}$. 
We also refer to \cite{kash} for complements on Kashiwara crystal basis theory.

\noindent\textbf{Step 1 : }Observe that $c_{\gamma,\gamma^{\ast}}^{\lambda
}=c_{\kappa,\kappa^{\ast}}^{\lambda}$ for $\gamma,\kappa$ in $\mathcal{P}_{n}$
whose conjugates differ only by their parts equal to $n$.\ So by decomposing each $\kappa
\in\mathcal{P}_{n}$ as $\kappa=(\gamma,n^{m})$, we get
\[
\sum_{\kappa\in\mathcal{P}_{n}}t^{\left\vert \kappa\right\vert }%
c_{\kappa,\kappa^{\ast}}^{\lambda}=\sum_{\gamma\in\mathcal{P}_{n-1}%
}t^{\left\vert \kappa\right\vert }c_{\gamma,\gamma^{\ast}}^{\lambda}\sum
_{m=0}^{+\infty}(t^{n})^{m}=\frac{1}{1-t^{n}}\sum_{\gamma\in\mathcal{P}_{n-1}%
}t^{\left\vert \kappa\right\vert }c_{\gamma,\gamma^{\ast}}^{\lambda}\,.
\]
Therefore, (\ref{relA}) can be rewritten in the form
\begin{equation}
\frac{K_{\lambda,0}(t)}{\prod_{i=1}^{n-1}(1-t^{i})}=\sum_{\gamma\in
\mathcal{P}_{n-1}}t^{\left\vert \gamma\right\vert }c_{\gamma,\gamma^{\ast}%
}^{\lambda} \label{relAs}\,,%
\end{equation}
where now all the partitions are in one-to-one correspondence with weights of
$\mathfrak{sl}_{n}$.

\noindent\textbf{Step 2 : }Recall that $R^{A_{n-1}}$ is endowed with the
scalar product $\langle\cdot,\cdot\rangle$ defined by
\[
\langle f,g\rangle=[fa_{\rho}\overline{ga_{\rho}}]_{0}\,,%
\]
and we then have $\langle s_{\lambda},s_{\mu}\rangle=\delta_{\lambda,\mu}$. It
follows that the adjoint of the multiplication by $s_{\lambda}$ in
$R^{A_{n-1}}$ for this scalar product is the multiplication by $s_{\lambda
^{\ast}}$. This gives
\[
c_{\gamma,\gamma^{\ast}}^{\lambda}=\langle s_{\gamma}s_{\gamma^{\ast}%
},s_{\lambda}\rangle=\langle s_{\gamma},s_{\gamma}s_{\lambda}\rangle
=c_{\gamma,\lambda}^{\gamma}\,.
\]
\noindent\textbf{Step 3 : }For any $\lambda\in\mathcal{P}_{n-1},$ write
$B(\lambda)$ for the crystal graph of the irreducible $\mathfrak{sl}_{n}%
$-module of highest weight $\lambda$.\ Let $b_{\lambda}$ be the highest weight
vertex of $B(\lambda)$. For any vertex $b\in B(\lambda)$, set
\[
\boldsymbol{\varepsilon}(b)=\sum_{i=1}^{n-1}\varepsilon_{i}(b)\omega_{i}%
\in\mathcal{P}_{n-1}\text{.}%
\]
Also given $\kappa,\delta$ in $\mathcal{P}_{n-1}$, write $\kappa\leq\delta$
when $\delta-\kappa$ is a dominant weight. We know that
\[
c_{\gamma,\lambda}^{\gamma}=\mathrm{card}\left\{  b_{\gamma}\otimes b\in
B(\gamma)\otimes B(\lambda)\mid\mathrm{wt}(b)=0\;\;\text{ and }\;\;%
\boldsymbol{\varepsilon}(b)\leq\gamma\right\}  .
\]
So we have in fact $c_{\gamma,\lambda}^{\gamma}=\mathrm{card}(B_{\lambda
}(\gamma))$, where
\[
B_{\lambda}(\gamma)=\{b\in B(\lambda)\mid\mathrm{wt}(b)=0\;\;\text{ and }\;\;\boldsymbol{\varepsilon}(b)\leq\gamma\}\,.
\]
Now for any $b\in B(\lambda)_{0}$, that is, $b\in B(\lambda)$ such that
$\mathrm{wt}(b)=0$, set%
\[
S(b)=\{\gamma\in\mathcal{P}_{n-1}\mid\boldsymbol{\varepsilon}(b)\leq\gamma\}\,.
\]
We have in fact
\[
S(b)=\boldsymbol{\varepsilon}(b)+\mathcal{P}_{n-1}\,,%
\]
that is, $\gamma\in S(b)$ if and only if there exists $\kappa\in\mathcal{P}%
_{n-1}$ such that $\gamma=\boldsymbol{\varepsilon}(b)+\kappa$.

\noindent\textbf{Step 4 : }Write
\begin{align*}
\sum_{\gamma\in\mathcal{P}_{n-1}}t^{\left\vert \gamma\right\vert }%
c_{\gamma,\gamma^{\ast}}^{\lambda}&=\sum_{b\in B(\lambda)_{0}}\sum_{\gamma\in
S(b)}t^{\left\vert \gamma\right\vert }=\sum_{b\in B(\lambda)_{0}}\sum
_{\kappa\in\mathcal{P}_{n-1}}t^{\left\vert \kappa\right\vert +\left\vert
\boldsymbol{\varepsilon}(b)\right\vert }=\\
&=\sum_{b\in B(\lambda)_{0}}t^{\left\vert \boldsymbol{\varepsilon}(b\right\vert
}\sum_{\kappa\in\mathcal{P}_{n-1}}t^{\left\vert \kappa\right\vert }=\frac
{1}{\prod_{i=1}^{n-1}(1-t^{i})}\sum_{b\in B(\lambda)_{0}}t^{\left\vert
\boldsymbol{\varepsilon}(b\right\vert }\,.
\end{align*}
In view of (\ref{relAs}), this gives the following result.

\begin{theorem}\label{charge-a}
For any partition $\lambda\in\mathcal{P}_{n-1}$, we have
\[
K_{\lambda,0}(t)=\sum_{b\in B(\lambda)_{0}}t^{\left\vert
\boldsymbol{\varepsilon}(b)\right\vert }\,.
\]
\end{theorem}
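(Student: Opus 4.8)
The plan is to massage the branching-type generating function in Assertion~1 of Proposition~\ref{br-rules} into a sum over crystal vertices, the whole content being a clean cancellation of denominators. First I would reduce the index set from $\mathcal{P}_{n}$ to $\mathcal{P}_{n-1}$: since adjoining a column of height $n$ to $\gamma\in\mathcal{P}_{n}$ leaves the $\mathfrak{sl}_{n}$-weight, and hence the multiplicity $c_{\gamma,\gamma^{\ast}}^{\lambda}$, unchanged, writing $\kappa=(\gamma,n^{m})$ and summing the geometric series in $m$ extracts a factor $1/(1-t^{n})$. This factor cancels the $i=n$ term in the denominator of (\ref{relA}) and produces (\ref{relAs}), in which $\gamma$ now runs over weights of $\mathfrak{sl}_{n}$ in bijection with $\mathcal{P}_{n-1}$.

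Next I would rewrite the multiplicity using the self-adjointness structure of $R^{A_{n-1}}$. With respect to the scalar product making the $s_{\lambda}$ orthonormal, multiplication by $s_{\lambda}$ has adjoint multiplication by $s_{\lambda^{\ast}}$, so $c_{\gamma,\gamma^{\ast}}^{\lambda}=\langle s_{\gamma}s_{\gamma^{\ast}},s_{\lambda}\rangle=\langle s_{\gamma},s_{\gamma}s_{\lambda}\rangle=c_{\gamma,\lambda}^{\gamma}$. I would then interpret $c_{\gamma,\lambda}^{\gamma}$, the multiplicity of $V(\gamma)$ in $V(\gamma)\otimes V(\lambda)$, through the crystal tensor product rule: the vertex $b_{\gamma}\otimes b$ is a highest-weight vertex of $B(\gamma)\otimes B(\lambda)$ exactly when $\varepsilon_{i}(b)\leq\langle\gamma,\alpha_{i}^{\vee}\rangle$ for all $i$, and its highest weight is then $\gamma+\mathrm{wt}(b)$. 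Retaining the vertices of highest weight $\gamma$ forces $\mathrm{wt}(b)=0$, while the inequalities read off exactly as $\boldsymbol{\varepsilon}(b)\leq\gamma$. Hence $c_{\gamma,\lambda}^{\gamma}=\mathrm{card}\{b\in B(\lambda)\mid\mathrm{wt}(b)=0,\ \boldsymbol{\varepsilon}(b)\leq\gamma\}$.

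The decisive step is then to swap the order of summation in (\ref{relAs}). Summing over zero-weight vertices $b$ first, each such $b$ contributes to precisely those $\gamma$ with $\boldsymbol{\varepsilon}(b)\leq\gamma$, that is, to the shifted cone $S(b)=\boldsymbol{\varepsilon}(b)+\mathcal{P}_{n-1}$. Writing $\gamma=\boldsymbol{\varepsilon}(b)+\kappa$ factors the inner generating function as $t^{|\boldsymbol{\varepsilon}(b)|}\sum_{\kappa\in\mathcal{P}_{n-1}}t^{|\kappa|}=t^{|\boldsymbol{\varepsilon}(b)|}/\prod_{i=1}^{n-1}(1-t^{i})$, using the standard evaluation of the generating function for partitions with at most $n-1$ parts. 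Substituting into (\ref{relAs}) and cancelling the common factor $1/\prod_{i=1}^{n-1}(1-t^{i})$ on both sides yields $K_{\lambda,0}(t)=\sum_{b\in B(\lambda)_{0}}t^{|\boldsymbol{\varepsilon}(b)|}$.

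I expect the main obstacle to be the crystal counting formula of the second paragraph: translating the dominance condition $\boldsymbol{\varepsilon}(b)\leq\gamma$ into the highest-weight characterization for $B(\gamma)\otimes B(\lambda)$ is the one place where genuine representation-theoretic input (the crystal form of the Littlewood--Richardson rule) enters, and it is exactly what makes the subsequent cone factorization reproduce the denominator $\prod_{i=1}^{n-1}(1-t^{i})$ required for the cancellation. Everything else is bookkeeping: the reduction to $\mathcal{P}_{n-1}$ and the adjointness identity are short formal manipulations, and the summation swap, though it is the conceptual heart of the argument, becomes routine once the cone description $S(b)=\boldsymbol{\varepsilon}(b)+\mathcal{P}_{n-1}$ is in hand.
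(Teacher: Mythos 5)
Your proposal is correct and follows essentially the same route as the paper's own proof: the reduction of (\ref{relA}) to (\ref{relAs}) by summing the geometric series over columns of height $n$, the adjointness identity $c_{\gamma,\gamma^{\ast}}^{\lambda}=c_{\gamma,\lambda}^{\gamma}$, the crystal count of $c_{\gamma,\lambda}^{\gamma}$ by zero-weight vertices $b$ with $\boldsymbol{\varepsilon}(b)\leq\gamma$, and the final summation swap over the cone $S(b)=\boldsymbol{\varepsilon}(b)+\mathcal{P}_{n-1}$ are precisely Steps 1--4 of Section~3. The only cosmetic difference is that you spell out the tensor-product rule justifying the crystal count, which the paper simply quotes as known.
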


\begin{remark}{\rm 
Since $b\in B(\lambda)_{0}$, we have for any $i=1,\ldots,n-1$ that
$\varepsilon_{i}(b)=\varphi_{i}(b).$ Moreover
\[
\left\vert \boldsymbol{\varepsilon}(b)\right\vert =\sum_{i=1}^{n-1}%
i\varepsilon_{i}(b)\,,
\]
so the previous expression of $K_{\lambda,0}(t)$ is the same as that obtained
in \cite{LLT}. The interesting point is that it emerges directly from our
computations, and does not use the definition of the charge (as in \cite{LLT})
given by Lascoux and Sch\"{u}tzenberger in terms of cyclage of tableaux or
indices on letters of words.}
\end{remark}

\begin{remark}{\rm 
This also permits to recover the multivariable version, defined by
\[
\frac{K_{\lambda,0}(t_{1},\ldots,t_{n-1})}{\prod_{i=1}^{n-1}(1-t_{i})}%
=\sum_{\gamma\in\mathcal{P}_{n-1}}\prod_{i=1}^{n-1}t_{i}^{a_{i}(\gamma
)}c_{\gamma,\gamma^{\ast}}^{\lambda}\,,%
\]
where $\gamma=\sum_{i=1}^{n-1}a_{i}(\gamma)\omega_{i}$. Namely, we have
\[
K_{\lambda,0}(t_{1},\ldots,t_{n-1})=\sum_{b\in B(\lambda)_{0}}\prod
_{i=1}^{n-1}t_{i}^{\varepsilon_{i}(b)}\,.%
\]
}
\end{remark}

\section{Stabilized generalized exponents and crystal graphs of type
$A_{+\infty}$}

\label{Sec_Cinfinite}We are going to explain the way in which the formula%
\[
\frac{K_{\lambda,0}^{C_\infty}(t)}{\prod_{i=1}^{\infty}(1-t^{2i})}=\sum_{\nu
\in\mathcal{P}^{(2)}}\sum_{\delta\in\mathcal{P}^{(1,1)}}t^{\left\vert
\nu\right\vert /2}c_{\lambda,\delta}^{\nu}\;\;\;\text{ in type }C_{\infty}%
\]
can be obtained from the combinatorics of crystals of type $A_{+\infty}$, which leads to a combinatorial proof of the positivity of the stabilized generalized exponents (or stabilized Lusztig $t$-analogues). In
particular, this will provide a combinatorial description of $K_{\lambda,0}^{C_\infty}(t)$,
and thus a similar description of $K_{\lambda,0}^{B_\infty}(t)$ and
$K_{\lambda,0}^{D_\infty}(t)$, by (\ref{StableRedC}). Furthermore, this will give a flavor of
the methods we will employ in the non-stable type $C_{n}$ case.

\subsection{Crystal of type $A_{+\infty}$}

\label{sub_crystA}Recall that crystals of type $A_{+\infty}$ are those
associated to the infinite Dynkin diagram%
\[
\overset{1}{\circ}-\overset{2}{\circ}-\overset{3}{\circ}\cdot\cdot\cdot
\]
The partitions label the dominant weights of $\mathfrak{sl}_{+\infty}$. If we
denote by $(\omega_{i})_{\geq1}$ the sequence of fundamental weights of $\mathfrak{sl}%
_{+\infty}$, we have for any partition $\lambda\in\mathcal{P}$%
\[
\lambda=\sum_{i}a_{i}\omega_{i}\,,%
\]
where $a_{i}$ is the number of columns with height $i$ in the Young diagram of
$\lambda$.

To each partition $\lambda$ corresponds the crystal $B(\lambda)$ of the
irreducible infinite-dimensional representation of $\mathfrak{sl}_{+\infty}$
parametrized by $\lambda$. A classical model for $B(\lambda)$ is that of
semistandard tableaux of shape $\lambda$ on the infinite alphabet
$\mathbb{Z}_{>0}=\{1<2<3<\cdots\}$. Given $b\in B(\lambda)$, we define%
\[
\boldsymbol{\varepsilon}(b)=\sum_{i=1}^{+\infty}\varepsilon_{i}(b)\omega
_{i}\;\;\text{ and }\;\;\boldsymbol{\varphi}(b)=\sum_{i=1}^{+\infty}\varepsilon
_{i}(b)\omega_{i}\,,%
\]
where both sums are in fact finite. The weight of $b\in B(\lambda)$
then verifies $\mathrm{wt}(b)=\boldsymbol{\varphi}(b)-\boldsymbol{\varepsilon
}(b)$.

\subsection{Combinatorial preliminaries}

In the sequel we consider the order $\leq$ on $\mathcal{P}$ such that
$\lambda\leq\mu$ if and only if $\mu-\lambda\in P_{+}^{\infty}$, that is, 
$\mu-\lambda$ decomposes in the basis of the $\omega_{i}$'s with non-negative
integer coefficients.

The partitions in $\mathcal{P}^{(2)}$ (resp. in $\mathcal{P}^{(1,1)}$) are
those which can be tiled with horizontal (resp. vertical)
dominoes.\ Equivalently, a partition $\kappa$ belongs to $\mathcal{P}^{(2)}$
(resp. $\mathcal{P}^{(1,1)}$) if and only if the number of columns (resp.
rows) of fixed height (resp. length) is even. So
\[
\kappa\in\mathcal{P}^{(2)}\;\;\Longleftrightarrow\;\;\kappa=\sum_{i}2a_{i}\omega
_{i}\;\;\text{ and }\;\;\kappa\in\mathcal{P}^{(1,1)}\;\;\Longleftrightarrow\;\;\kappa=\sum
_{i}a_{i}\omega_{2i}\,.
\]
Set $\mathcal{P}^{\boxplus}=\mathcal{P}^{(2)}\cap\mathcal{P}^{(1,1)}$.\ It
follows that
\[
\kappa\in\mathcal{P}^{\boxplus}\;\;\Longleftrightarrow\;\;\kappa=\sum_{i}2a_{i}%
\omega_{2i}\,,%
\]
that is, $\lambda$ decomposes in terms of the fundamental weights $\omega_{2i}$ with
even coefficients. In the general case of a partition $\kappa\in\mathcal{P}$
written as
\[
\kappa=\sum_{i}a_{i}\omega_{i}\,,%
\]
we define%
\[
\kappa_{\boxplus}=\sum_{i}(a_{2i}-(a_{2i}\;\mathrm{mod}\;2))\omega_{2i}\;\;\text{ and }\;\;\kappa^{\boxplus}=\kappa-\kappa_{\boxplus}=\sum_{i}a_{2i+1}\omega_{2i+1}%
+\sum_{i}(a_{2i}\;\mathrm{mod}\;2)\omega_{2i}\,.
\]
So $\kappa_{\boxplus}$ and $\kappa^{\boxplus}$ are partitions and
$\kappa_{\boxplus}\in\mathcal{P}^{\boxplus}$.

\begin{example}{\rm 
Consider
\[
\kappa=%
\begin{tabular}
[c]{|l|l|llll}\hline
&  &  & \multicolumn{1}{|l}{} & \multicolumn{1}{|l}{} & \multicolumn{1}{|l|}{}%
\\\hline
&  &  & \multicolumn{1}{|l}{} & \multicolumn{1}{|l}{} & \multicolumn{1}{|l|}{}%
\\\hline
&  &  & \multicolumn{1}{|l}{} &  & \\\cline{1-3}
&  &  &  &  & \\\cline{1-2}
&  &  &  &  & \\\cline{1-2}
&  &  &  &  & \\\cline{1-2}%
\end{tabular}\,.
\]
Then
\[
\kappa_{_{\boxplus}}=%
\begin{tabular}
[c]{|l|l|ll}\hline
&  &  & \multicolumn{1}{|l|}{}\\\hline
&  &  & \multicolumn{1}{|l|}{}\\\hline
&  &  & \\\cline{1-2}
&  &  & \\\cline{1-2}
&  &  & \\\cline{1-2}
&  &  & \\\cline{1-2}%
\end{tabular}
\;\;\text{ and }\;\;\kappa^{_{\boxplus}}=%
\begin{tabular}
[c]{|l|l}\hline
& \multicolumn{1}{|l|}{}\\\hline
& \multicolumn{1}{|l|}{}\\\hline
& \\\cline{1-1}%
\end{tabular}
\,.
\]
}
\end{example}

We denote by $P_{(2)}^{\infty}$ and $P_{(1,1)}^{\infty}$ the sublattices of
$P=%
%TCIMACRO{\tbigoplus \limits_{i\geq1}}%
%BeginExpansion
{\textstyle\bigoplus\limits_{i\geq1}}
%EndExpansion
\mathbb{Z\omega}_{i}$ defined by
\[
P_{(2)}^{\infty}=%
%TCIMACRO{\tbigoplus \limits_{i\geq1}}%
%BeginExpansion
{\textstyle\bigoplus\limits_{i\geq1}}
%EndExpansion
2\mathbb{Z\omega}_{i}\;\;\text{ and }\;\;P_{(1,1)}^{\infty}=%
%TCIMACRO{\tbigoplus \limits_{i\geq1}}%
%BeginExpansion
{\textstyle\bigoplus\limits_{i\geq1}}
%EndExpansion
\mathbb{Z\omega}_{2i}\,.
\]
Observe that $P_{(2)}^{\infty}\cap\mathcal{P}=\mathcal{P}_{(2)}$ and
$P_{(1,1)}^{\infty}\cap\mathcal{P}=\mathcal{P}_{(1,1)}$. We have also%
\[
P_{\boxplus}^{\infty}=P_{(2)}^{\infty}\cap P_{(1,1)}^{\infty}=%
%TCIMACRO{\tbigoplus \limits_{i\geq1}}%
%BeginExpansion
{\textstyle\bigoplus\limits_{i\geq1}}
%EndExpansion
2\mathbb{Z\omega}_{2i}\;\;\text{ and }\;\;\mathcal{P}^{\boxplus}=\mathcal{P}\cap
P_{(2)}^{\infty}\cap P_{(1,1)}^{\infty}\text{.}%
\]
We define the order $\leq_{\boxplus}$ on $\mathcal{P}$ by
\[
\lambda\leq_{\boxplus}\mu\Longleftrightarrow\mu-\lambda\in\mathcal{P}%
^{\boxplus}\text{.}%
\]

\subsection{A combinatorial description of the series $K_{\lambda,0}^{C_\infty}(t)$}

\label{subsec_Cinfinite}

\begin{definition}\label{def6.2}
Consider a partition $\mu$. A vertex $b\in B(\lambda)$ is called $\mu
$-distinguished if there exists $(\nu,\delta)\in\mathcal{P}^{(2)}%
\times\mathcal{P}^{(1,1)}$ such that
\[
\boldsymbol{\varphi}(b)=\nu-\mu\;\;\text{ and }\;\;\boldsymbol{\varepsilon}%
(b)=\delta-\mu.
\]
\end{definition}

\begin{definition}\label{def6.3}
Let $D(\lambda)$ be the set of all vertices in $B(\lambda)$ which are $\mu
$-distinguished for at least a partition $\mu$.
\end{definition}

Clearly, if $b$ is $\mu$-distinguished, then $b$ is $(\mu+\kappa
)$-distinguished for any $\kappa\in\mathcal{P}^{\boxplus}$ (change
$(\nu,\delta)\in\mathcal{P}^{(2)}\times\mathcal{P}^{(1,1)}$ to $(\nu
+\kappa,\delta+\kappa)\in\mathcal{P}^{(2)}\times\mathcal{P}^{(1,1)}$). For any
$b\in D(\lambda)$, set
\[
S_{b}=\{\mu\in\mathcal{P}\mid b\text{ is }\mu\text{-distinguished}\}\,.
\]

\begin{lemma}\label{lem6.4}
The set $S_{b}$ has the form%
\[
S_{b}=\mu_{b}+\mathcal{P}^{\boxplus}\,,%
\]
and $\mu_{b}$ is minimal for $\leq_{\boxplus}$ such that $b$ is $\mu_{b}$-distinguished. Moreover,
for any $\mu\in S_{b}$, we have $\mu_{b}=\mu^{\boxplus}.$
\end{lemma}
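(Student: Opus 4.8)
The plan is to reduce everything to the coordinates of weights in the basis $(\omega_i)_{i\geq 1}$ and then to a coordinatewise parity analysis. First I would observe that, in Definition~\ref{def6.2}, the pair $(\nu,\delta)$ is forced: since $\boldsymbol{\varphi}(b)$, $\boldsymbol{\varepsilon}(b)$ and $\mu$ all have non-negative coordinates, $b$ is $\mu$-distinguished if and only if
\[
\boldsymbol{\varphi}(b)+\mu\in\mathcal{P}^{(2)}\quad\text{and}\quad\boldsymbol{\varepsilon}(b)+\mu\in\mathcal{P}^{(1,1)}\,.
\]
Writing $\boldsymbol{\varphi}(b)=\sum_i p_i\omega_i$, $\boldsymbol{\varepsilon}(b)=\sum_i e_i\omega_i$ and $\mu=\sum_i m_i\omega_i$ with all $p_i,e_i,m_i\in\mathbb{Z}_{\geq 0}$, and recalling the characterizations $\kappa\in\mathcal{P}^{(2)}$ iff all its coordinates are even, and $\kappa\in\mathcal{P}^{(1,1)}$ iff all its odd-indexed coordinates vanish, these two conditions become purely numerical.

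Next I would read off the constraints. The condition $\boldsymbol{\varepsilon}(b)+\mu\in\mathcal{P}^{(1,1)}$ says $e_i+m_i=0$ for every odd $i$; by non-negativity this forces $e_i=m_i=0$ for all odd $i$. The condition $\boldsymbol{\varphi}(b)+\mu\in\mathcal{P}^{(2)}$ says $p_i+m_i$ is even for every $i$; for odd $i$ (where $m_i=0$) this means $p_i$ is even, and for even $i$ it means $m_i\equiv p_i\pmod 2$. Thus, for $b\in D(\lambda)$, the set $S_b$ consists precisely of the $\mu=\sum_i m_i\omega_i$ with $m_i=0$ for odd $i$ and $m_{2i}\geq 0$, $m_{2i}\equiv p_{2i}\pmod 2$. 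Since each such $\mu$ lies in $\mathcal{P}^{(1,1)}$, its decomposition $\mu=\mu^{\boxplus}+\mu_{\boxplus}$ satisfies $\mu^{\boxplus}=\sum_i (m_{2i}\bmod 2)\,\omega_{2i}=\sum_i (p_{2i}\bmod 2)\,\omega_{2i}$, which depends only on $b$. Setting $\mu_b:=\sum_i (p_{2i}\bmod 2)\,\omega_{2i}$, this already proves the last assertion $\mu_b=\mu^{\boxplus}$ for every $\mu\in S_b$.

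Finally I would assemble the set equality and the minimality. Using the facts extracted above ($e_i=0$ and $p_i$ even for odd $i$), a direct check shows that $\mu_b$ itself satisfies the two membership conditions, so $\mu_b\in S_b$. For the inclusion $S_b\subseteq\mu_b+\mathcal{P}^{\boxplus}$, I write any $\mu\in S_b$ as $\mu=\mu^{\boxplus}+\mu_{\boxplus}=\mu_b+\mu_{\boxplus}$ with $\mu_{\boxplus}\in\mathcal{P}^{\boxplus}$; the reverse inclusion $\mu_b+\mathcal{P}^{\boxplus}\subseteq S_b$ is exactly the remark preceding the lemma (distinguishedness is preserved under adding an element of $\mathcal{P}^{\boxplus}$), applied to $\mu_b\in S_b$. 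Hence $S_b=\mu_b+\mathcal{P}^{\boxplus}$, and since $\mu-\mu_b\in\mathcal{P}^{\boxplus}$ for every $\mu\in S_b$, we get $\mu_b\leq_{\boxplus}\mu$; that is, $\mu_b$ is the minimum of $S_b$ for $\leq_{\boxplus}$, in particular minimal. The only real care in the argument is the parity bookkeeping that disentangles the ``all coordinates even'' condition coming from $\mathcal{P}^{(2)}$ from the ``odd-indexed coordinates vanish'' condition coming from $\mathcal{P}^{(1,1)}$; once these are separated, each step is routine.
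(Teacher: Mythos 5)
Your proof is correct, and it takes a genuinely more explicit route than the paper's. The paper reduces the lemma to showing that $S_b$ has a \emph{unique} minimal element for $\leq_{\boxplus}$: it first proves that any minimal $\mu\in S_b$ has coordinates in $\{0,1\}$ on the even-indexed fundamental weights (a descent argument: if $a_i\geq 2$ with $i$ even, subtract $2\omega_i$ simultaneously from $\mu$, $\nu$, $\delta$ and stay in $S_b$), and then compares two minimal elements modulo the lattices $P_{(2)}^{\infty}$ and $P_{(1,1)}^{\infty}$, whose intersection $P_{\boxplus}^{\infty}$ forces them to coincide. You instead solve the membership conditions outright in coordinates: nonemptiness of $S_b$ forces $\varepsilon_i(b)=0$ and $\varphi_i(b)$ even for odd $i$, and then $S_b$ is exactly the set of $\mu$ supported on even-indexed weights with $m_{2i}\equiv\varphi_{2i}(b)\pmod 2$; all three assertions of the lemma are read off directly from this description. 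What your approach buys is the explicit formula $\mu_b=\sum_i(\varphi_{2i}(b)\bmod 2)\,\omega_{2i}$ and, in effect, a proof of the paper's subsequent Proposition~\ref{Prop_Char_Distin} (the characterization of distinguished vertices and of $\mu_b$), which the paper establishes separately; it also makes the final claim $\mu_b=\mu^{\boxplus}$ transparent, whereas in the paper's proof this is left as an implicit consequence of the $\{0,1\}$ bound on the even coordinates of the minimal element. What the paper's argument buys is independence from the full explicit characterization: it isolates the abstract mechanism (descent plus congruence modulo $P_{\boxplus}^{\infty}$), which is the part that would survive in situations where the constraints cannot be solved so cleanly. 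Both proofs invoke the remark preceding the lemma for the inclusion $\mu_b+\mathcal{P}^{\boxplus}\subseteq S_b$, and both are ultimately parity bookkeeping, so the difference is one of organization---but yours is self-contained and carries strictly more information.
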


\begin{proof}
It suffices to show that $S_{b}$ contains a unique element $\mu_{b}$ minimal
for the order $\leq_{\boxplus}$, since each element $\mu$ of $S_{b}$ can then
be written in the form $\mu=\mu_{b}+\kappa$ with $\kappa$ in $\mathcal{P}%
^{\boxplus}$. So consider $\mu$ and $\mu^{\prime}$ two elements in $S_{b}$
minimal for $\leq_{\boxplus}$ in $S_{b}$.\ Write
\[
\mu=\sum_{i}a_{i}\omega_{i}\;\;\text{ and }\;\;\mu^{\prime}=\sum_{i}a_{i}^{\prime
}\omega_{i}\,.
\]
Since $\mu$ is minimal in $S_{b}$, we must have $a_{i}\in\{0,1\}$ for any even
$i$.\ Indeed, if $a_{i}\geq2$ for an even integer $i\geq1$, we could consider $\mu^{\flat}=\mu-2\omega_{i}\in
\mathcal{P}$. Since $\boldsymbol{\varphi}(b)=\nu-\mu\geq0$, we can consider $\nu^{\flat}=\nu-2\omega_{i}\in\mathcal{P}%
_{(2)}$. Similarly, we have $\boldsymbol{\varepsilon}(b)=\delta-\mu\geq0$, so 
$\delta^{\flat}=\delta-2\omega_{i}\in\mathcal{P}_{(1,1)}$ (as $i$ is even).
Finally we obtain a contradiction since%
\[
\boldsymbol{\varphi}(b)=\nu^{\flat}-\mu^{\flat}\;\;\text{ and }\;\;%
\boldsymbol{\varepsilon}(b)=\delta^{\flat}-\mu^{\flat}\,,%
\]
so $\mu^{\flat}<_{\boxplus}\mu$ belongs to $S_{b}$.

We prove similarly that $a_{i}^{\prime}\in\{0,1\}$ for any even $i$. Now we
can use that $\mu$ and $\mu^{\prime}$ belong to $S_{b}$, which implies that
\[
\mu=\mu^{\prime}\operatorname{mod}P_{(2)}^{+\infty}\;\;\text{ and }\;\;\mu=\mu
^{\prime}\operatorname{mod}P_{(1,1)}^{+\infty}\,.
\]
Since $P_{(2)}^{+\infty}\cap P_{(1,1)}^{+\infty}=P_{\boxplus}^{+\infty}%
=\sum_{i\text{ even}}2\mathbb{Z\omega}_{i}$, we get in fact
\[
\mu=\mu^{\prime}\operatorname{mod}P_{\boxplus}^{+\infty}\text{.}%
\]
This imposes that $a_{i}=a_{i}^{\prime}$ for any odd $i$ and $a_{i}%
=a_{i}^{\prime}\;\mathrm{mod}\;2$ for any even $i$. But we have seen that for any
even $i$, both $a_{i}$ and $a_{i}^{\prime}$ belong to $\{0,1\}$. So we obtain
finally that $a_{i}=a_{i}^{\prime}$ for any $i$ even also. This permits to
conclude that $\mu=\mu^{\prime}$ and $S_{b}$ admits a unique minimal element
for $\leq_{\boxplus}$.
\end{proof}

\bigskip
The following proposition makes more explicit the structure of the distinguished tableaux.
\begin{proposition}
\label{Prop_Char_Distin}Let $b$ be a vertex of $B(\lambda)$ with $\lambda
\in\mathcal{P}$. Then $b$ is distinguished if and only if

\begin{enumerate}
\item $\varepsilon_{i}(b)=0$ for any odd $i,$

\item $\varphi_{i}(b)$ is even for any odd $i.$
\end{enumerate}
Moreover, we then have $\mu_{b}=\sum_{i}(\varphi_{2i}(b)\;\mathrm{mod}\;2)\omega_{2i}%
=:\varphi(b)\;\mathrm{mod}\;2$\,.

\end{proposition}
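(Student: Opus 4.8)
The plan is to unfold Definitions \ref{def6.2} and \ref{def6.3} into explicit conditions on the coordinates of $\boldsymbol{\varphi}(b)$ and $\boldsymbol{\varepsilon}(b)$ in the basis of fundamental weights. First I would observe that $b$ is $\mu$-distinguished precisely when the weights $\nu := \boldsymbol{\varphi}(b)+\mu$ and $\delta := \boldsymbol{\varepsilon}(b)+\mu$ lie in $\mathcal{P}^{(2)}$ and $\mathcal{P}^{(1,1)}$ respectively. Writing $\mu=\sum_i m_i\omega_i$ with all $m_i\geq 0$ (as $\mu$ is a partition, equivalently a non-negative combination of the $\omega_i$), this becomes a system of vanishing-and-parity conditions on the integers $\varphi_i(b)+m_i$ and $\varepsilon_i(b)+m_i$, one for each index $i$.

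Next I would separate the conditions according to the parity of the index $i$. The requirement $\delta\in\mathcal{P}^{(1,1)}$ means that the coefficient of $\omega_i$ in $\delta$ vanishes for every odd $i$, i.e. $\varepsilon_i(b)+m_i=0$; since both summands are non-negative, this forces $\varepsilon_i(b)=0$ and $m_i=0$ for all odd $i$, which is condition (1). The requirement $\nu\in\mathcal{P}^{(2)}$ means $\varphi_i(b)+m_i$ is even for every $i$. For odd $i$ we already have $m_i=0$, so this reduces to $\varphi_i(b)$ being even, which is condition (2); for even $i$ there is no constraint coming from $\delta$, so the parity condition on $\nu$ can always be met by choosing $m_i\equiv\varphi_i(b)\pmod 2$. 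This shows that conditions (1) and (2) are exactly what is needed for some admissible $\mu$ to exist, proving the equivalence.

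Finally, for the formula for $\mu_b$ I would extract the admissible set from the above analysis: $m_i=0$ is forced for odd $i$, while for even $i$ one needs $m_i\equiv\varphi_i(b)\pmod 2$ with $m_i\geq 0$. Hence the admissible $\mu$ form the coset $\mu^{0}+\mathcal{P}^{\boxplus}$, where $\mu^{0}=\sum_i(\varphi_{2i}(b)\bmod 2)\,\omega_{2i}$ arises from the minimal choice $m_i=\varphi_i(b)\bmod 2\in\{0,1\}$ on the even indices. By Lemma \ref{lem6.4} this coset has a unique $\leq_{\boxplus}$-minimal element, which must therefore be $\mu^{0}$, giving $\mu_b=\mu^{0}$ as claimed. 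I expect no serious obstacle here: the argument is a direct unwinding of the definitions, and the only point requiring care is the non-negativity bookkeeping for the odd indices—namely that $\varepsilon_i(b)+m_i=0$ with $m_i\geq 0$ collapses two quantities to zero simultaneously—together with the observation that the even-index parity conditions never obstruct the existence of $\mu$.
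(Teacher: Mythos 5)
Your proof is correct and follows essentially the same route as the paper: both unwind Definition \ref{def6.2} into coordinate-wise conditions in the fundamental weight basis, use dominance (non-negativity) to force $\varepsilon_i(b)=m_i=0$ on odd indices, derive the parity condition on $\varphi_i(b)$, and identify $\mu_b$ as the minimal admissible choice $\sum_i(\varphi_{2i}(b)\bmod 2)\,\omega_{2i}$. Your explicit description of the admissible set as the coset $\mu^0+\mathcal{P}^{\boxplus}$ is a slightly more detailed bookkeeping of what the paper states tersely, but it is the same argument.
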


\begin{proof}
If $\varepsilon_{i}(b)=0$ for any odd $i,$ then $\mathbf{\varepsilon}(b)$
belongs to $\mathcal{P}^{(1,1)}$, and thus $\mathbf{\varepsilon}(b)+\mu$ belongs to
$\mathcal{P}^{(1,1)}$ for any $\mu$ in $\mathcal{P}^{(1,1)}$. Since
$\boldsymbol{\varphi}(b)$ is even for any odd $i$, we will have that
$\boldsymbol{\varphi}(b)+\mu$ belongs to $\mathcal{P}^{(2)}$ for any $\mu$ in
$\mathcal{P}^{(1,1)}$ such that the coefficients of $\omega_{i}$ with $i$ even in the expansions of $\mu$ and
$\boldsymbol{\varphi}(b)$ have the same
parity. Finally $b$ is $\mu$-distinguished for any such $\mu$.

Conversely, assume there exists $\mu$ in $\mathcal{P}$ such that
$\mathbf{\varepsilon}(b)+\mu\in\mathcal{P}^{(1,1)}$ and $\varphi(b)+\mu
\in\mathcal{P}^{(2)}$. Since the coefficients of $\omega_{i}$ with $i$ odd  in the expansion of $\mathbf{\varepsilon}(b)+\mu$
are equal to $0$, and both $\mathbf{\varepsilon}(b)$ and $\mu$ are dominant
weights, we must have that they belong in fact
to $\mathcal{P}^{(1,1)}$. Therefore, the condition $\varphi(b)+\mu
\in\mathcal{P}^{(2)}$ implies that $\varphi_{i}(b)$ is even for any odd $i$.

To determine $\mu_{b}$, we have to choose $\mu$ minimal for the order
$<_{\boxplus}$.\ Since $\varepsilon_{i}(b)=0$ for any odd $i,$ we have in fact
to choose $\mu$ minimal for the order $<_{\boxplus}$ so that $\varphi
(b)+\mu\in\mathcal{P}^{(2)}$.\ This imposes that $\mu_{b}=\sum_{i}%
(\varphi_{2i}(b)\;\mathrm{mod}\;2)\omega_{2i}$.
\end{proof}

\begin{proposition}\label{prop6.6}
We have
\[
\sum_{\nu\in\mathcal{P}^{(2)}}\sum_{\delta\in\mathcal{P}^{(1,1)}}t^{\left\vert
\nu\right\vert /2}c_{\lambda,\delta}^{\nu}=\sum_{b\in D(\lambda)}t^{\left\vert
\boldsymbol{\varphi}(b)+\mu_{b}\right\vert /2}\sum_{\kappa\in\mathcal{P}%
^{\boxplus}}t^{\left\vert \kappa\right\vert /2}\,.
\]
\end{proposition}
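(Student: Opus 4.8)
The plan is to move everything to the crystal side, rewrite the Littlewood--Richardson coefficients $c_{\lambda,\delta}^{\nu}$ as cardinalities of explicit subsets of $B(\lambda)$, and then interchange the order of summation so that the outer sum runs over vertices $b$ instead of over pairs $(\nu,\delta)$. First I would record the crystal count for the tensor multiplicity. Using the symmetry $c_{\lambda,\delta}^{\nu}=c_{\delta,\lambda}^{\nu}$ together with the tensor product rule for highest weight vertices (exactly as in Step~3 of the proof of Theorem~\ref{charge-a}, now applied to $B(\delta)\otimes B(\lambda)$), one gets
\[
c_{\lambda,\delta}^{\nu}=\mathrm{card}\{b\in B(\lambda)\mid\mathrm{wt}(b)=\nu-\delta\;\text{ and }\;\boldsymbol{\varepsilon}(b)\leq\delta\}\,,
\]
where the weight condition records that $b_{\delta}\otimes b$ has weight $\nu$, and $\boldsymbol{\varepsilon}(b)\leq\delta$ is the condition that $b_{\delta}\otimes b$ be of highest weight.

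Substituting this into the left-hand side and exchanging the order of summation, I would rewrite the expression as $\sum_{b\in B(\lambda)}\sum_{(\nu,\delta)}t^{\left\vert\nu\right\vert/2}$, where the inner sum runs over pairs $(\nu,\delta)\in\mathcal{P}^{(2)}\times\mathcal{P}^{(1,1)}$ with $\mathrm{wt}(b)=\nu-\delta$ and $\boldsymbol{\varepsilon}(b)\leq\delta$. The key step is to match these admissible pairs with the set $S_{b}$ from Lemma~\ref{lem6.4}. Given such a pair, set $\mu=\delta-\boldsymbol{\varepsilon}(b)$, which is a partition since $\boldsymbol{\varepsilon}(b)\leq\delta$ and every dominant weight of $\mathfrak{sl}_{+\infty}$ is a partition; then $\boldsymbol{\varepsilon}(b)=\delta-\mu$ and $\boldsymbol{\varphi}(b)=\mathrm{wt}(b)+\boldsymbol{\varepsilon}(b)=\nu-\mu$, so $b$ is $\mu$-distinguished in the sense of Definition~\ref{def6.2}. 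Conversely, each $\mu\in S_{b}$ yields the admissible pair $\nu=\boldsymbol{\varphi}(b)+\mu$, $\delta=\boldsymbol{\varepsilon}(b)+\mu$. This sets up a bijection between the admissible pairs for $b$ and the elements of $S_{b}$, and in particular it restricts the outer sum to $b\in D(\lambda)$ (Definition~\ref{def6.3}), since vertices with $S_{b}=\emptyset$ contribute nothing.

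It then remains to evaluate the inner sum. Under the bijection above $\left\vert\nu\right\vert=\left\vert\boldsymbol{\varphi}(b)+\mu\right\vert$, so invoking Lemma~\ref{lem6.4} to write $S_{b}=\mu_{b}+\mathcal{P}^{\boxplus}$ and using the additivity of the rank $\left\vert\cdot\right\vert$ on the weight lattice (it is the linear form $\sum_{i}a_{i}\omega_{i}\mapsto\sum_{i}ia_{i}$), I obtain
\[
\sum_{(\nu,\delta)}t^{\left\vert\nu\right\vert/2}=\sum_{\kappa\in\mathcal{P}^{\boxplus}}t^{\left\vert\boldsymbol{\varphi}(b)+\mu_{b}+\kappa\right\vert/2}=t^{\left\vert\boldsymbol{\varphi}(b)+\mu_{b}\right\vert/2}\sum_{\kappa\in\mathcal{P}^{\boxplus}}t^{\left\vert\kappa\right\vert/2}\,.
\]
Summing over $b\in D(\lambda)$ gives the claimed identity. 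The main obstacle is pinning down the crystal count and its conventions precisely, namely verifying that the highest weight condition in $B(\delta)\otimes B(\lambda)$ translates exactly into $\boldsymbol{\varepsilon}(b)\leq\delta$ for $b\in B(\lambda)$, and that the resulting pairs $(\nu,\delta)$ are in honest bijection with $S_{b}$ (with no admissible pair missed and none counted twice). Once this dictionary is in place, the remaining manipulations are formal, resting only on Lemma~\ref{lem6.4} and the linearity of the rank.
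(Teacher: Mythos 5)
Your proposal is correct and follows essentially the same route as the paper's own proof: both rewrite $c_{\lambda,\delta}^{\nu}$ as the count of $b\in B(\lambda)$ with $\boldsymbol{\varepsilon}(b)\leq\delta$ and $\mathrm{wt}(b)=\nu-\delta$ (i.e., $b_{\delta}\otimes b$ highest weight of weight $\nu$), then for fixed $b$ identify the admissible pairs $(\nu,\delta)\in\mathcal{P}^{(2)}\times\mathcal{P}^{(1,1)}$ with the set $S_{b}$ via $\mu=\delta-\boldsymbol{\varepsilon}(b)$, and finally apply Lemma~\ref{lem6.4} to write $S_{b}=\mu_{b}+\mathcal{P}^{\boxplus}$ and factor out $t^{\left\vert\boldsymbol{\varphi}(b)+\mu_{b}\right\vert/2}$ using linearity of the rank. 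Your write-up merely makes explicit the bijection and conventions that the paper treats as recalled facts.
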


\begin{proof}
Recall that $c_{\lambda,\delta}^{\nu}=\mathrm{card}\{b\in B(\lambda
)\mid\boldsymbol{\varepsilon}(b)\leq\delta$ and $\boldsymbol{\varphi
}(b)=\boldsymbol{\varepsilon}(b)+\nu-\delta\}$. For a fixed $b\in B(\lambda)$,
the idea is to gather all the pairs $(\nu,\delta)\in\mathcal{P}^{(2)}%
\times\mathcal{P}^{(1,1)}$ such that $b_{\delta}\otimes b$ is of highest
weight $\nu$. This is equivalent to saying that $b$ is $\mu$-distinguished with
$\mu=\nu-\boldsymbol{\varphi}(b)=\delta-\boldsymbol{\varepsilon}(b)$.\ So we
get%
\[
\sum_{\nu\in\mathcal{P}^{(2)}}\sum_{\delta\in\mathcal{P}^{(1,1)}}t^{\left\vert
\nu\right\vert /2}c_{\lambda,\delta}^{\nu}=\sum_{b\in D(\lambda)}\sum_{\mu\in
S_{b}}t^{\left\vert \boldsymbol{\varphi}(b)+\mu\right\vert /2}\,.
\]
Now by Lemma \ref{lem6.4}, we can write $\boldsymbol{\varphi}(b)+\mu
=\boldsymbol{\varphi}(b)+\mu_{b}+\kappa$, where $\kappa=\mu-\mu_{b}$ belongs to
$\mathcal{P}^{\boxplus}$. This gives%
\[
\sum_{\nu\in\mathcal{P}^{(2)}}\sum_{\delta\in\mathcal{P}^{(1,1)}}t^{\left\vert
\nu\right\vert /2}c_{\lambda,\delta}^{\nu}=\sum_{b\in D(\lambda)}t^{\left\vert
\boldsymbol{\varphi}(b)+\mu_{b}\right\vert /2}\sum_{\kappa\in\mathcal{P}%
^{\boxplus}}t^{\left\vert \kappa\right\vert /2}\,.
\]

\end{proof}

\begin{theorem}\label{thm6.7}
We have
\[
K_{\lambda,0}^{C_\infty}(t)=\sum_{b\in D(\lambda)}t^{\left\vert \boldsymbol{\varphi
}(b)+\mu_{b}\right\vert /2}\,.
\]
\end{theorem}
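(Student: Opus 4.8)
The plan is to read off the result by combining the stable branching identity of Section~\ref{Section_Stable} with Proposition~\ref{prop6.6} and then cancelling a common factor. The input is the relation
\[
\frac{K_{\lambda,0}^{C_\infty}(t)}{\prod_{i=1}^{\infty}(1-t^{2i})}=\sum_{\nu\in\mathcal{P}^{(2)}}\sum_{\delta\in\mathcal{P}^{(1,1)}}t^{\left\vert \nu\right\vert /2}c_{\lambda,\delta}^{\nu}
\]
recalled at the start of Section~\ref{Sec_Cinfinite}. Proposition~\ref{prop6.6} expresses the right-hand side as
\[
\sum_{b\in D(\lambda)}t^{\left\vert \boldsymbol{\varphi}(b)+\mu_{b}\right\vert /2}\sum_{\kappa\in\mathcal{P}^{\boxplus}}t^{\left\vert \kappa\right\vert /2}\,,
\]
so the theorem will follow at once provided the trailing sum over $\mathcal{P}^{\boxplus}$ coincides with the reciprocal of the denominator $\prod_{i=1}^{\infty}(1-t^{2i})$ appearing on the left.

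The second step is therefore to evaluate this generating function. By the description of $\mathcal{P}^{\boxplus}$ recorded in Section~\ref{Sec_Cinfinite}, a partition $\kappa$ lies in $\mathcal{P}^{\boxplus}$ exactly when $\kappa=\sum_{i\geq 1}2a_{i}\omega_{2i}$ for nonnegative integers $a_{i}$; consequently $\left\vert \kappa\right\vert =\sum_{i\geq 1}4ia_{i}$ and $\left\vert \kappa\right\vert /2=\sum_{i\geq 1}2ia_{i}$. Since the $a_{i}$ range independently over $\mathbb{Z}_{\geq 0}$, the sum factors as a product of geometric series,
\[
\sum_{\kappa\in\mathcal{P}^{\boxplus}}t^{\left\vert \kappa\right\vert /2}=\prod_{i\geq 1}\sum_{a_{i}\geq 0}t^{2ia_{i}}=\prod_{i\geq 1}\frac{1}{1-t^{2i}}\,.
\]
Plugging this into the displayed equality from Proposition~\ref{prop6.6} and cancelling the factor $\prod_{i=1}^{\infty}(1-t^{2i})$ against the left-hand denominator yields the claimed formula for $K_{\lambda,0}^{C_\infty}(t)$.

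I do not expect any genuine obstacle here: the combinatorial content has already been absorbed into Lemma~\ref{lem6.4} (which guarantees $S_{b}=\mu_{b}+\mathcal{P}^{\boxplus}$ and hence the clean splitting of the double sum into the distinguished part and the $\mathcal{P}^{\boxplus}$-part) and into Proposition~\ref{prop6.6}. The only point requiring a little care is the bookkeeping in the parameterization of $\mathcal{P}^{\boxplus}$ by the even coefficients $2a_{i}$ attached to the even fundamental weights $\omega_{2i}$, which is precisely what makes the exponent $\left\vert \kappa\right\vert /2$ reproduce $\prod_{i\geq 1}(1-t^{2i})^{-1}$ exactly rather than up to a reindexing or a shift.
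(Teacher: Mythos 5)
Your proposal is correct and takes essentially the same route as the paper: the paper's proof of this theorem is precisely the one-line observation that $\sum_{\kappa\in\mathcal{P}^{\boxplus}}t^{\left\vert \kappa\right\vert /2}=\prod_{i=1}^{\infty}(1-t^{2i})^{-1}$, with the combination with Proposition~\ref{prop6.6} and the stable identity left implicit. Your write-up merely makes explicit the verification of that generating-function identity via the parameterization $\kappa=\sum_{i}2a_{i}\omega_{2i}$, which is a correct and harmless elaboration.
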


\begin{proof}
It suffices to observe that
\[
\sum_{\kappa\in\mathcal{P}^{\boxplus}}t^{\left\vert \kappa\right\vert
/2}=\frac{1}{\prod_{i=1}^{\infty}(1-t^{2i})}\,.
\]

\end{proof}

\bigskip

We can get similarly a multivariable version.\ For any $b\in D(\lambda)$, set
\[
\boldsymbol{\varphi}(b)+\mu_{b}=\sum_{i}2a_{i}(b)\omega_{i}\in\mathcal{P}%
^{(2)}\,,%
\]
and assign to each fundamental weight $\omega_{i}$ a formal variable $t_{i}$.
The decomposition
\[
\nu=\boldsymbol{\varphi}(b)+\mu_{b}+\kappa\text{ with }\kappa\in
\mathcal{P}^{\boxplus}%
\]
will give the multivariable version. First let $\boldsymbol{t}=(t_{1}%
,t_{2},\ldots t_{n},\ldots)$ be the sequence of formal variables $t_{i}%
,i\geq1$.\ If one prefers, one can also consider each $t_{i}$ as a real number in
$[0,a]$ with $a<1$. For any $\beta\in P^{+\infty}$ such that $\beta=\sum
_{i}\beta_{i}\omega_{i}$, set $\boldsymbol{t}^{\beta}=\prod_{i\geq1}%
t_{i}^{\beta_{i}}$.

\begin{theorem}\label{thm6.8}
Define the multivariable formal series $K_{\lambda,0}^{C_\infty}(\boldsymbol{t})$ by%
\[
\frac{K_{\lambda,0}^{C_\infty}(\boldsymbol{t})}{\prod_{i=1}^{\infty}(1-t_{2i})}%
=\sum_{\nu\in\mathcal{P}^{(2)}}\sum_{\delta\in\mathcal{P}^{(1,1)}%
}\boldsymbol{t}^{\frac{1}{2}\nu}c_{\lambda,\delta}^{\nu}\,.
\]
Then we have
\[
K_{\lambda,0}^{C_\infty}(\boldsymbol{t})=\sum_{b\in D(\lambda)}\boldsymbol{t}^{\frac
{1}{2}(\boldsymbol{\varphi}(b)+\mu_{b})}\,.
\]
\end{theorem}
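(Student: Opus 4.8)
The plan is to run the argument of Proposition~\ref{prop6.6} and Theorem~\ref{thm6.7} almost verbatim, but replacing the scalar monomial $t^{|\nu|/2}$ by the multigraded monomial $\boldsymbol{t}^{\frac{1}{2}\nu}$ at every stage, and checking that each identity used in the single-variable case refines to this finer grading. Since all the structural input (the description of $c_{\lambda,\delta}^{\nu}$ via distinguished vertices, and the shape $S_{b}=\mu_{b}+\mathcal{P}^{\boxplus}$ furnished by Lemma~\ref{lem6.4}) is independent of the grading, the only genuinely new work is the bookkeeping with the variables $\boldsymbol{t}$.

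Concretely, I would first fix $b\in B(\lambda)$ and gather, exactly as in Proposition~\ref{prop6.6}, all pairs $(\nu,\delta)\in\mathcal{P}^{(2)}\times\mathcal{P}^{(1,1)}$ for which $b_{\delta}\otimes b$ has highest weight $\nu$; these are indexed by $\mu\in S_{b}$ with $\nu=\boldsymbol{\varphi}(b)+\mu$. This yields
\[
\sum_{\nu\in\mathcal{P}^{(2)}}\sum_{\delta\in\mathcal{P}^{(1,1)}}\boldsymbol{t}^{\frac{1}{2}\nu}c_{\lambda,\delta}^{\nu}=\sum_{b\in D(\lambda)}\sum_{\mu\in S_{b}}\boldsymbol{t}^{\frac{1}{2}(\boldsymbol{\varphi}(b)+\mu)}\,.
\]
Then, using Lemma~\ref{lem6.4} to write $\mu=\mu_{b}+\kappa$ with $\kappa\in\mathcal{P}^{\boxplus}$, together with the multiplicativity $\boldsymbol{t}^{\alpha+\beta}=\boldsymbol{t}^{\alpha}\boldsymbol{t}^{\beta}$, I would factor this as
\[
\sum_{b\in D(\lambda)}\boldsymbol{t}^{\frac{1}{2}(\boldsymbol{\varphi}(b)+\mu_{b})}\sum_{\kappa\in\mathcal{P}^{\boxplus}}\boldsymbol{t}^{\frac{1}{2}\kappa}\,.
\]
Finally, since $\kappa\in\mathcal{P}^{\boxplus}$ means $\kappa=\sum_{i}2a_{i}\omega_{2i}$ with $a_{i}\geq0$, we have $\boldsymbol{t}^{\frac{1}{2}\kappa}=\prod_{i}t_{2i}^{a_{i}}$, whence $\sum_{\kappa\in\mathcal{P}^{\boxplus}}\boldsymbol{t}^{\frac{1}{2}\kappa}=\prod_{i\geq1}(1-t_{2i})^{-1}$; comparing with the defining relation for $K_{\lambda,0}^{C_\infty}(\boldsymbol{t})$ then gives the result.

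The computation is essentially routine, and the only point that needs a moment's care — the step I would treat as the (mild) obstacle — is ensuring that the half-weights appearing as exponents are genuine integral weights, so that the factorization and the geometric series are meaningful. This is precisely guaranteed by Definition~\ref{def6.2}: since $b$ is $\mu_{b}$-distinguished, we have $\boldsymbol{\varphi}(b)+\mu_{b}=\nu\in\mathcal{P}^{(2)}$ for some $\nu$, so $\frac{1}{2}(\boldsymbol{\varphi}(b)+\mu_{b})$ is integral; and $\frac{1}{2}\kappa$ is integral because $\mathcal{P}^{\boxplus}\subset\mathcal{P}^{(2)}$. With this in hand the refined geometric series above is legitimate, and the proof closes exactly as in the scalar case, the passage $|\alpha+\beta|=|\alpha|+|\beta|\rightsquigarrow\boldsymbol{t}^{\alpha+\beta}=\boldsymbol{t}^{\alpha}\boldsymbol{t}^{\beta}$ being the sole difference.
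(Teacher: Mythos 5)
Your proposal is correct and follows essentially the same route as the paper: the paper obtains Theorem~\ref{thm6.8} precisely by rerunning Proposition~\ref{prop6.6} and Theorem~\ref{thm6.7} with the decomposition $\nu=\boldsymbol{\varphi}(b)+\mu_{b}+\kappa$, $\kappa\in\mathcal{P}^{\boxplus}$, tracked in the multigrading, exactly as you do. Your closing computation $\sum_{\kappa\in\mathcal{P}^{\boxplus}}\boldsymbol{t}^{\frac{1}{2}\kappa}=\prod_{i\geq1}(1-t_{2i})^{-1}$ and the integrality check via Definition~\ref{def6.2} are the same bookkeeping the paper leaves implicit.
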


\begin{remark}{\rm 
Multivariable generalized exponents defined via the Joseph-Letzter filtration
already appear in the literature (see \cite{Cald}).}
\end{remark}

\subsection{Distinguished tableaux and zero weight King type tableaux}\label{dist-tab}

We are now going to explain how the distinguished tableaux we introduced previously to describe the stable generalized exponents are in
natural bijection with zero weight tableaux very close to King tableaux. We
will in fact consider the sets $T_{C_{\infty}}(\lambda)$ of semistandard
tableaux of shape $\lambda$ on the infinite ordered alphabet $\{1<\overline
{1}<2<\overline{2}<\cdots\}$. There will be no condition on the position of
the barred letters here, contrary to the definition of King tableaux.

We start by discussing the structure of the distinguished tableaux. 
Recall the notation of Section~\ref{subsec_Cinfinite}. For any distinguished
vertex $b$ in $D(\lambda)$, set
\[
\boldsymbol{\theta}(b)=\boldsymbol{\varphi}(b)+\mu_{b}\,,%
\]
and let $\theta_{j}(b)$ be the coefficient of $\omega_j$ in the expansion of $\boldsymbol{\theta}(b)$. Since $\boldsymbol{\theta}(b)$ is a dominant weight for
$\mathfrak{sl}_{\infty}$, it can be regarded as a partition. Recall also that
$\left\vert \lambda\right\vert$ is even, says $\left\vert \lambda\right\vert =2\ell$. In the sequel of this section, we
shall assume that $B(\lambda)$ is realized as the set of semistandard tableaux on
the infinite ordered alphabet $\mathbb{Z}_{>0}$.\ For any integer $i\geq1$, a
reverse lattice skew tableau on $\{2i-1,2i\}$ is a semistandard filling of a
skew Young diagram with columns of height at most $2$ by letters $2i-1$ and
$2i$ whose Japanese reading is a lattice word (i.e., in each left
factor the number of letters $2i$ is less or equal to that of letters $2i-1$).

\begin{example}{\rm 
Assume $i=2$. Then
\begin{equation}%
\begin{tabular}
[c]{lllllllllllllll}\cline{12-12}\cline{12-15}
&  &  &  &  &  &  &  &  &  &  & \multicolumn{1}{|l}{$3$} &
\multicolumn{1}{|l}{$3$} & \multicolumn{1}{|l}{$3$} & \multicolumn{1}{|l|}{$3$%
}\\\cline{8-12}\cline{8-15}
&  &  &  &  &  &  & \multicolumn{1}{|l}{$3$} & \multicolumn{1}{|l}{$3$} &
\multicolumn{1}{|l}{$4$} & \multicolumn{1}{|l}{$4$} & \multicolumn{1}{|l}{$4$}
& \multicolumn{1}{|l}{} &  & \\\cline{4-4}\cline{4-6}\cline{8-12}
&  &  & \multicolumn{1}{|l}{$3$} & \multicolumn{1}{|l}{$3$} &
\multicolumn{1}{|l}{$4$} & \multicolumn{1}{|l}{} &  &  &  &  &  &  &  &
\\\cline{1-4}\cline{1-6}%
\multicolumn{1}{|l}{$3$} & \multicolumn{1}{|l}{$3$} & \multicolumn{1}{|l}{$4$}
& \multicolumn{1}{|l}{$4$} & \multicolumn{1}{|l}{} &  &  &  &  &  &  &  &  &
& \\\cline{1-4}%
\end{tabular}
\label{skewtab}%
\end{equation}
is a reverse lattice skew tableau on $\{3,4\}$.}
\end{example}

The following proposition is a reformulation of Proposition
\ref{Prop_Char_Distin}.

\begin{proposition}
\label{Prop_Char_Distin_T}A semistandard tableau $T$ of shape $\lambda$ is
distinguished if and only if for any integer $i\geq1$, the skew tableau
obtained by keeping only the letters $2i-1$ and $2i$ in $T$ is a reverse
lattice tableau, and the rows of $\boldsymbol{\theta}(T)$ have even lengths.
\end{proposition}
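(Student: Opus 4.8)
The plan is to translate the weight-theoretic conditions of Proposition~\ref{Prop_Char_Distin} into their tableau-combinatorial equivalents, using the standard dictionary between the Kashiwara operators $\tilde{e}_i,\tilde{f}_i$ and the signature (bracketing) rule for crystals of type $A_{+\infty}$ realized by semistandard tableaux. Recall that Proposition~\ref{Prop_Char_Distin} characterizes distinguished vertices by the two conditions $\varepsilon_i(b)=0$ for all odd $i$, and $\varphi_i(b)$ even for all odd $i$, and moreover identifies $\mu_b$, hence $\boldsymbol{\theta}(b)=\boldsymbol{\varphi}(b)+\mu_b$. The whole task is to show that these conditions, stated one index $i$ at a time, are equivalent to the stated statement about reverse lattice skew tableaux on $\{2i-1,2i\}$ and about the parities of $\theta_j(T)$.

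First I would recall the local nature of the crystal operators: for a fixed index $i$, the action of $\tilde{e}_i$ and $\tilde{f}_i$ on a tableau $T$ depends only on the sub-filling of $T$ consisting of the letters $i$ and $i+1$, read in the appropriate (Japanese/column) reading word, via the usual $i$-signature where each $i{+}1$ contributes a ``$+$'' and each $i$ contributes a ``$-$'' and matched pairs are cancelled. The quantities $\varepsilon_i(T)$ and $\varphi_i(T)$ are then the numbers of unmatched ``$+$'' and unmatched ``$-$'' respectively. Taking $i=2j-1$ to be odd, the relevant sub-filling is exactly the skew tableau on the two letters $\{2j-1,2j\}$ that the proposition isolates. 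I would then observe that the condition $\varepsilon_{2j-1}(T)=0$ says precisely that there are no unmatched $2j$'s, i.e. in every left factor of the reading word the number of $2j$'s does not exceed the number of $(2j{-}1)$'s; this is exactly the reverse lattice (Japanese-reading lattice-word) condition stated for the skew tableau on $\{2j-1,2j\}$. Thus condition~(1) of Proposition~\ref{Prop_Char_Distin}, ranging over all odd indices, is equivalent to the reverse-lattice condition for every $i\geq 1$.

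Next I would handle condition~(2), that $\varphi_{2j-1}(T)$ is even for every odd index $2j-1$. Here I would use the identification of $\boldsymbol{\theta}(T)=\boldsymbol{\varphi}(T)+\mu_b$ furnished by Proposition~\ref{Prop_Char_Distin}, where $\mu_b=\sum_i(\varphi_{2i}(b)\;\mathrm{mod}\;2)\omega_{2i}$, so that $\mu_b$ involves only even-indexed fundamental weights. Consequently the coefficient $\theta_{2j-1}(T)$ of $\omega_{2j-1}$ in $\boldsymbol{\theta}(T)$ equals $\varphi_{2j-1}(T)$, while for even indices $\theta_{2j}(T)=\varphi_{2j}(T)+(\varphi_{2j}(T)\;\mathrm{mod}\;2)$ is automatically even. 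Since $\theta_j(T)$ is, by construction, the number of columns of height $j$ in the partition $\boldsymbol{\theta}(T)$, the rows of $\boldsymbol{\theta}(T)$ having even lengths is equivalent to every $\theta_j(T)$ being even (a partition has all row-lengths even iff every column-height multiplicity is even — this is the defining feature of $\mathcal{P}^{(1,1)}$, matching the $\boxplus$-structure recalled in Section~\ref{subsec_Cinfinite}). Because the even-index coefficients are even for free and $\theta_{2j-1}(T)=\varphi_{2j-1}(T)$, the evenness of all $\theta_j(T)$ reduces exactly to the evenness of $\varphi_{2j-1}(T)$ for all odd indices, which is condition~(2).

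Combining the two equivalences gives the claim, so no genuinely new content is needed beyond the dictionary between crystal statistics and tableau combinatorics; the proposition is, as the text says, a reformulation. The main obstacle I anticipate is purely expository: being careful and explicit about the precise reading-word convention (the ``Japanese reading'' of a skew shape) so that the signature rule for $\varepsilon_{2j-1}$, $\varphi_{2j-1}$ genuinely matches the lattice-word condition as stated, and confirming that reading only the entries $2j-1,2j$ of a semistandard tableau of shape $\lambda$ yields a legitimately semistandard skew tableau with columns of height at most $2$ (so that the notion of reverse lattice skew tableau from the preceding paragraph applies verbatim). Once the conventions are pinned down, the argument is a direct unwinding of Proposition~\ref{Prop_Char_Distin}.
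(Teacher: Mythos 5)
Your proof is correct and takes essentially the same approach as the paper: the paper offers no separate argument, stating only that the proposition is a reformulation of Proposition~\ref{Prop_Char_Distin}, and your unwinding of the signature rule (vanishing of $\varepsilon_{2i-1}$ $\Leftrightarrow$ the reverse lattice condition on the $\{2i-1,2i\}$-sub-tableau; evenness of $\varphi_{2i-1}$ $\Leftrightarrow$ evenness of all coefficients $\theta_j(T)$, since $\mu_b$ corrects only the even-indexed coefficients) is precisely that reformulation. One small slip worth fixing: a partition with all row lengths even, equivalently all column-height multiplicities even, is the defining property of $\mathcal{P}^{(2)}$, not of $\mathcal{P}^{(1,1)}$, though this mislabeling does not affect the logic of your argument.
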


We now explain the correspondence between distinguished
tableaux and zero weight King type tableaux. 

Observe that a tableau $T\ $in $T_{C_{\infty}}(\lambda)$ of weight zero is a
juxtaposition of skew tableaux of weight $0$ on $\{i,\overline{\imath}\}$ obtained
by keeping only the letters $i$ and $\overline{\imath}$. So to obtain a bijection
between the set of distinguished tableaux of shape $\lambda$ and the subset 
$T_{C_{\infty}}^{0}(\lambda)\subset T_{C_{\infty}}(\lambda)$ of zero weight
tableaux, it suffices to describe a bijection between the set of reverse
lattice tableaux on $\{2i-1,2i\}$ of given shape and weight in $2\omega
_{i}\mathbb{Z}_{\geq0},$ and the set of skew tableaux on $\{i,\overline{\imath}\}$
with weight $0$. Now recall that we have the structure of a $U_{q}%
(\mathfrak{sl}_{2})$-crystal on the set of all skew semistandard tableaux of
fixed skew shape both on $\{2i-1,2i\}$ and $\{i,\overline{\imath}\}$. By replacing
each letter $2i-1$ by $i$ and each letter $2i$ by $\overline{\imath}$, we get a
crystal isomorphism $f$.\ The distinguished tableaux correspond to the highest
weight vertices of weight in $2\omega_{i}\mathbb{Z}_{\geq0}$ for the
$\{2i-1,2i\}$-structure, whereas the tableaux of weight $0$ give the vertices
of weight $0$ in the $\{i,\overline{\imath}\}$-crystal structure.\ By observing
that only $U_{q}(\mathfrak{sl}_{2})$-crystals with highest weight in
$2\omega_{i}\mathbb{Z}_{\geq0}$ admit a vertex of weight $0$, which is then
unique, we obtain that the map $\mathcal{C}$ which associates to each zero
weight vertex in the $\{i,\overline{\imath}\}$-crystal structure its highest weight
vertex in the $\{2i-1,2i\}$-crystal structure is the bijection we need. More
precisely, the map $\mathcal{C}$ (resp. its inverse) is obtained as usual: we start by encoding in the
reading of each $\{i,\overline{\imath}\}$-tableau (resp. of each $\{2i-1,2i\}$%
-tableau) the letters $i$ by $+$ and the letters $\overline{\imath}$ by $-$ (resp.
the letters $2i-1$ by $+$ and the letters $2i-1$ by $-$), and next by recursively
deleting all the factors $+-$, thus obtaining a reduced word of the form $-^{m}%
+^{m}$ (resp. $+^{2m}$).\ It then suffices to change the $m$ letters $\overline{\imath}$
corresponding to the $m$ surviving symbols $-$ into $i$ and to apply the
isomorphism $f^{-1}$ (resp. change $m$ letters $2i-1$
corresponding to the rightmost $m$ surviving symbols $+$ into $2i$ and apply the
isomorphism $f$).

\begin{example}\cellsize=3.1ex
{\rm 
The skew tableau of weight $0$ on $\{2,\overline{2}\}$ corresponding to
\rm{(\ref{skewtab})} is
\[%
\tableau{&&&&&&&&&&&2&2&2&\overline{2}\\
&&&&&&&2&\overline{2}&\overline{2}&\overline{2}&\overline{2}\\
&&&2&2&\overline{2}\\
2&2&\overline{2}&\overline{2}}
%\begin{tabular}
%[c]{lllllllllllllll}\cline{12-12}\cline{12-15}
%&  &  &  &  &  &  &  &  &  &  & \multicolumn{1}{|l}{$2$} &
%\multicolumn{1}{|l}{$2$} & \multicolumn{1}{|l}{$2$} &
%\multicolumn{1}{|l|}{$\overline{2}$}\\\cline{8-12}\cline{8-15}
%&  &  &  &  &  &  & \multicolumn{1}{|l}{$2$} & \multicolumn{1}{|l}{$\overline
%{2}$} & \multicolumn{1}{|l}{$\overline{2}$} & \multicolumn{1}{|l}{$\overline
%2}$} & \multicolumn{1}{|l}{$\overline{2}$} & \multicolumn{1}{|l}{} &  &
%\\\cline{4-4}\cline{4-6}\cline{8-12}
%&  &  & \multicolumn{1}{|l}{$2$} & \multicolumn{1}{|l}{$2$} &
%\multicolumn{1}{|l}{$\overline{2}$} & \multicolumn{1}{|l}{} &  &  &  &  &  &
%&  & \\\cline{1-4}\cline{1-6}%
%\multicolumn{1}{|l}{$2$} & \multicolumn{1}{|l}{$2$} &
%\multicolumn{1}{|l}{$\overline{2}$} & \multicolumn{1}{|l}{$\overline{2}$} &
%\multicolumn{1}{|l}{} &  &  &  &  &  &  &  &  &  & \\\cline{1-4}%
%\end{tabular}
\]}
\cellsize=2.5ex
\end{example}

In the sequel, we shall abuse the notation and identify the two crystal
structures corresponding up to the isomorphism $f$.

\begin{remarks}\label{rem11.5}{\rm 
(1) It seems not immediate to read $\boldsymbol{\theta}$ directly on zero
weight tableaux.\ The simplest way to do this is to start from a tableau $T\in
T_{C_{\infty}}^{0}(\lambda)$ and compute its associated highest weight tableau
$H(T)$ for the $U_{q}(\mathfrak{sl}_{2}\oplus\cdots\oplus\mathfrak{sl}_{2}%
)$-structure obtained by considering only the action of the crystal operators
indexed by odd integers. So we get%
\[
K_{\lambda,0}^{C_{\infty}}(\boldsymbol{t})=\sum_{T\in T_{C_{\infty}}%
^{0}(\lambda)}\boldsymbol{t}^{\frac{\left\vert \boldsymbol{\theta
}(H(T))\right\vert }{2}}\,.
\]

(2) Let $K_{C_{\infty}}(\lambda)$ be the set of King tableaux on the
infinite ordered alphabet $\{1<\overline{1}<2<\overline{2}<\cdots\}$. Recall
that $T\in T_{C_{\infty}}(\lambda)$ belongs to $K_{C_{\infty}}(\lambda)$ when, 
for any $i=1,\ldots,n$, the letters in row $i$ are greater or equal to $i$.
Since the number of barred letters can only decrease when we compute $H(T)$,
the tableaux $T$ and $H(T)$ either both belong to $K_{C_{\infty}}(\lambda)$ or
belong to $T_{C_{\infty}}(\lambda)\setminus K_{C_{\infty}}(\lambda)$.
Nevertheless, the set $K_{C_{\infty}}^{0}(\lambda)$ of King tableaux of type
$C_{\infty}$ and zero weight is only strictly contained in $T_{C_{\infty}}%
^{0}(\lambda)$ due to the constraints on the rows. In particular, we have%
\[
K_{\lambda,0}^{C_{\infty}}(\boldsymbol{t})\neq\sum_{T\in K_{C_{\infty}}%
^{0}(\lambda)}\boldsymbol{t}^{\frac{1}{2}\boldsymbol{\theta}(H(T))}%
\]
in general, and the finite rank $t$-analogue thus cannot be obtained from the
statistic $\boldsymbol{\theta}$ and King tableaux of zero weight and type
$C_{n}$.
}
\end{remarks}

\begin{example}\label{ex-11}\cellsize=3.1ex
{\rm 
Assume $\lambda=(1,1)$. Then we get
\[
T_{C_{\infty}}^0(\lambda)=\left\{\tableau{k\\ \overline{k}}
%\begin{tabular}
%[c]{|c|}\hline
%$k$\\\hline
%$\overline{k}$\\\hline
%\end{tabular}
\mid k\in\mathbb{Z}_{\geq1}\right\}  \;\;\text{ and }\;\;K_{C_{\infty}}^0(\lambda
)=\left\{\tableau{k\\ \overline{k}}
%\begin{tabular}
%[c]{|c|}\hline
%$k$\\\hline
%$\overline{k}$\\\hline
%\end{tabular}
\mid k\in\mathbb{Z}_{\geq2}\right\}  .
\]
This gives%
\[
H\left(\tableau{k\\ \overline{k}}
%\begin{tabular}
%[c]{|c|}\hline
%$k$\\\hline
%$\overline{k}$\\\hline
%\end{tabular}
\right)  =%
\begin{tabular}
[c]{|c|}\hline
$2k-1$\\\hline
$2k$\\\hline
\end{tabular}
\;\;\text{ and }\;\;\boldsymbol{\varphi}\left(
\begin{tabular}
[c]{|c|}\hline
$2k-1$\\\hline
$2k$\\\hline
\end{tabular}
\right)  =\omega_{2k}\text{ for any }k\geq1.
\]
Therefore
\[
\boldsymbol{\theta}\left(
\begin{tabular}
[c]{|c|}\hline
$2k-1$\\\hline
$2k$\\\hline
\end{tabular}
\right)  =2\omega_{2k}\text{ for any }k\geq1\text{.}%
\]
Finally%
\[
K_{\lambda,0}^{C_{\infty}}(\boldsymbol{t})=\sum_{k\geq1}t_{2k}\;\;\text{ and }\;\;K_{\lambda,0}^{C_{\infty}}(t)=\sum_{k\geq1}t^{2k}=\frac{t^{2}}{1-t^{2}}\,.
\]
\cellsize=2.5ex
}
\end{example}

\section{Type $C_{n}$ generalized exponents via the Sundaram LR tableaux}\label{sundaram-lr}

\subsection{Sundaram description of the coefficients $c_{\nu}^{\lambda
}(\mathfrak{sp}_{2n})$}

Recall that in type $C_{n},$ the equality $c_{\nu}^{\lambda}(\mathfrak{sp}%
_{2n})=\sum_{\delta\in\mathcal{P}^{(1,1)}}c_{\lambda,\delta}^{\nu}$ only holds
when $\nu\in\mathcal{P}_{n}$, in which case we have in fact%
\[
c_{\nu}^{\lambda}(\mathfrak{sp}_{2n})=\sum_{\delta\in\mathcal{P}_{n}^{(1,1)}%
}c_{\lambda,\delta}^{\nu}\,.
\]
In the general case of a partition $\nu\in\mathcal{P}_{2n}$, we have by a
result of Sundaram (see \cite[Corollary~3.12]{Sun})
\[
c_{\nu}^{\lambda}(\mathfrak{sp}_{2n})=\sum_{\delta\in\mathcal{P}^{(1,1)}%
}\widehat{c}_{\lambda,\delta}^{\nu}\,,%
\]
where $\widehat{c}_{\lambda,\delta}^{\nu}$ is the number of Sundaram-LR
tableaux, that is, the number of LR tableaux of shape $\nu/\lambda$ and weight
$\delta$ filled with letters in $\{1,\ldots,2n\}$ such that each odd letter
$2i+1$ appears no lower (English convention) than row $(n+i)$ in $\nu$ (the
rows being numbered from top to bottom). Observe that for any partition
$\kappa$ in $\mathcal{P}_{2n}^{\boxplus}$, a Sundaram-LR tableau of shape
$\nu/\lambda$ and weight $\delta$ can be easily turned into a Sundaram-LR
tableau of shape $(\nu+\kappa)/\lambda$ and weight $\delta+\kappa$ by adding
letters $i$ in rows $i$, which does not violate the Sundaram condition.

\subsection{LR-tableaux and crystals}

Given $\nu,\lambda,\mu$ three partitions, the Littlewood-Richardson coefficient $c_{\lambda\mu}^\nu$ 
is equal to the cardinality of the $4$ following sets:

\begin{enumerate}
\item the set of LR tableaux of shape $\nu/\lambda$ and weight $\mu$,

\item the set of LR tableaux of shape $\nu/\mu$ and weight $\lambda$,

\item the set of vertices $b\in B(\lambda)$ such that $\boldsymbol{\varepsilon
}(b)\leq\mu$,

\item the set of vertices $b^{\prime}\in B(\mu)$ such that
$\boldsymbol{\varepsilon}(b^{\prime})\leq\lambda$.
\end{enumerate}

Now there exist bijections between all these sets. Given a LR tableau $\tau$
of shape $\nu/\mu$ and weight $\lambda$, we obtain the corresponding tableau
$\mathbf{T}(\tau)$ in $B(\lambda)$, called companion tableau, by placing in the $k$-th row of the Young diagram $\lambda$
the numbers of the rows of $\tau$ containing an entry $k$. 

\begin{example}{\rm 
For
\[
\tau=%
\begin{tabular}
[c]{lll|ll}\cline{4-5}
&  &  & $1$ & \multicolumn{1}{|l|}{$1$}\\\cline{3-5}
&  & \multicolumn{1}{|l|}{$1$} & $2$ & \multicolumn{1}{|l|}{$2$}%
\\\cline{2-3}\cline{2-5}
& \multicolumn{1}{|l}{$1$} & \multicolumn{1}{|l|}{$3$} &  & \\\cline{2-3}
& \multicolumn{1}{|l}{$2$} & \multicolumn{1}{|l|}{$4$} &  & \\\cline{1-3}%
\multicolumn{1}{|l}{$1$} & \multicolumn{1}{|l}{$3$} & \multicolumn{1}{|l|}{$5$%
} &  & \\\cline{1-3}%
\end{tabular}
\ \text{ we get }\mathbf{T}(\tau)=%
\begin{tabular}
[c]{|l|llll}\hline
$1$ & $1$ & \multicolumn{1}{|l}{$2$} & \multicolumn{1}{|l}{$3$} &
\multicolumn{1}{|l|}{$5$}\\\hline
$2$ & $2$ & \multicolumn{1}{|l}{$4$} & \multicolumn{1}{|l}{} & \\\cline{1-3}%
$3$ & $5$ & \multicolumn{1}{|l}{} &  & \\\cline{1-2}%
$4$ &  &  &  & \\\cline{1-1}%
$5$ &  &  &  & \\\cline{1-1}%
\end{tabular}\,.
\]
}
\end{example}

Now we can proceed as in Section~\ref{Sec_Cinfinite} by first determining the
subset of $\widehat{D}(\lambda)\subset B^{\mathfrak{gl}_{2n}}(\lambda)$ coming
from Sundaram-LR tableaux ($D(\lambda)$ would correspond to all the LR
tableaux as in the previous section). To do this we proceed as
follows.

\begin{enumerate}
\item Start with a Sundaram-LR tableau of shape $\nu/\lambda$ and weight
$\delta$, and determine its associated tableau $\mathbf{T}(\tau)$ of shape
$\delta$ and entries in $\{1,\ldots,2n\}$.

\item Observe that $T_{\lambda}\otimes\mathbf{T}(\tau)$ is of highest weight
$\nu$ in $B(\lambda)\otimes B(\delta)$.

\item Compute the combinatorial $R$-matrix, and obtain $\widehat{\mathbf{T}}(\tau)$ in
$B(\lambda)$ such that $T_{\lambda}\otimes\mathbf{T}(\tau)\leftrightarrows
T_{\delta}\otimes\widehat{\mathbf{T}}(\tau)$. Here we can choose
the version of the combinatorial $R$-matrix given by the Henriques-Kamnitzer commutor \cite{hakccc,katccd}, which has several concrete realizations; see Section~\ref{sunkwo} for more details.

\item Finally, define $\widehat{D}(\lambda)$ as the subset of tableaux $T\in
D(\lambda)$ for which there exists $(\nu,\delta)\in\mathcal{P}_{2n}%
^{(2)}\times\mathcal{P}_{2n}^{(1,1)}$ and $\tau$ a Sundaram-LR tableau of
shape $\nu/\lambda$ and weight $\delta$ such that $T=\widehat{\mathbf{T}}%
(\tau)$.
\end{enumerate}

Now, we have
\[
\sum_{\kappa\in\mathcal{P}_{2n}^{\boxplus}}t^{\left\vert \kappa\right\vert
/2}=\frac{1}{\prod_{i=1}^{n}(1-t^{2i})}\,,%
\]
since $\mathcal{P}_{2n}^{\boxplus}$ is obtained by dilating by a factor $2$
the set $\mathcal{P}_{n}$ (i.e. each square becomes a $\boxplus$). By using
similar arguments\footnote{Here, we need to use that for any $\kappa
\in\mathcal{P}_{\boxplus}^{(2n)}$, one can produce a Sundaram-LR tableau of
shape $(\nu+\kappa)/\lambda$ and weight $\delta+\kappa$ starting from any
Sundaram-LR tableau of shape $\nu/\lambda$ and weight $\delta$.} to those of
Section~\ref{Sec_Cinfinite}, we obtain the following result.

\begin{theorem}
We have
\[
K_{\lambda,0}^{C_{n}}(t)=\sum_{b\in\widehat{D}(\lambda)}t^{\left\vert
\boldsymbol{\varphi}(b)+\mu_{b}\right\vert /2}\,.
\]

\end{theorem}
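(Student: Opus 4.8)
The plan is to mirror the argument of Section~\ref{subsec_Cinfinite}, replacing the stable branching identity by its finite-rank counterpart. First I would combine Assertion~3 of Proposition~\ref{br-rules} with Sundaram's rule $c_{\nu}^{\lambda}(\mathfrak{sp}_{2n})=\sum_{\delta\in\mathcal{P}^{(1,1)}}\widehat{c}_{\lambda,\delta}^{\nu}$ to obtain
\[
\frac{K_{\lambda,0}^{C_{n}}(t)}{\prod_{i=1}^{n}(1-t^{2i})}=\sum_{\nu\in\mathcal{P}_{2n}^{(2)}}\sum_{\delta\in\mathcal{P}_{2n}^{(1,1)}}t^{\left\vert \nu\right\vert /2}\widehat{c}_{\lambda,\delta}^{\nu}\,,
\]
where the sums may be restricted to $\mathcal{P}_{2n}$ since the Sundaram-LR tableaux are filled with letters in $\{1,\ldots,2n\}$. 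The goal is then to convert the right-hand side into a sum over $\widehat{D}(\lambda)$.

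Next I would reinterpret $\widehat{c}_{\lambda,\delta}^{\nu}$ through the construction preceding the statement: the companion map sends a Sundaram-LR tableau $\tau$ of shape $\nu/\lambda$ and weight $\delta$ to $\mathbf{T}(\tau)\in B(\delta)$ with $T_{\lambda}\otimes\mathbf{T}(\tau)$ of highest weight $\nu$, and the combinatorial $R$-matrix produces $\widehat{\mathbf{T}}(\tau)=b\in B(\lambda)$ with $T_{\delta}\otimes b$ of highest weight $\nu$. Since both maps are bijections, for fixed $(\nu,\delta)$ the assignment $\tau\mapsto b$ is injective, so $\widehat{c}_{\lambda,\delta}^{\nu}$ counts the vertices $b\in\widehat{D}(\lambda)$ arising this way. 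The highest-weight condition on $T_{\delta}\otimes b$ reads $\boldsymbol{\varepsilon}(b)\leq\delta$ and $\mathrm{wt}(b)=\nu-\delta$, equivalently $\boldsymbol{\varphi}(b)=\nu-\mu$ and $\boldsymbol{\varepsilon}(b)=\delta-\mu$ with $\mu=\nu-\boldsymbol{\varphi}(b)=\delta-\boldsymbol{\varepsilon}(b)$, that is, $b$ is $\mu$-distinguished in the sense of Definition~\ref{def6.2}. Reorganizing the double sum over $(\nu,\delta)$ as a sum over $b\in\widehat{D}(\lambda)$ and, for each $b$, over the weights $\mu$ realizable by some Sundaram-LR tableau with $\widehat{\mathbf{T}}(\tau)=b$, and using $\nu=\boldsymbol{\varphi}(b)+\mu$, I would reach
\[
\sum_{\nu,\delta}t^{\left\vert \nu\right\vert /2}\widehat{c}_{\lambda,\delta}^{\nu}=\sum_{b\in\widehat{D}(\lambda)}\;\sum_{\mu\in V_{b}}t^{\left\vert \boldsymbol{\varphi}(b)+\mu\right\vert /2}\,,
\]
where $V_{b}$ denotes this set of realizable $\mu$.

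The heart of the proof is a finite-rank analogue of Lemma~\ref{lem6.4}, namely $V_{b}=\mu_{b}+\mathcal{P}_{2n}^{\boxplus}$. The inclusion $V_{b}\subseteq\mu_{b}+\mathcal{P}_{2n}^{\boxplus}$ follows from Lemma~\ref{lem6.4} together with the bound on the number of rows: every $\mu\in V_{b}$ makes $b$ $\mu$-distinguished, hence lies in $S_{b}=\mu_{b}+\mathcal{P}^{\boxplus}$, and all shapes involved have at most $2n$ rows. For the reverse inclusion I would use the shift of the footnote in Section~\ref{sundaram-lr}: adding letters $i$ in rows $i$ turns a Sundaram-LR tableau of shape $\nu/\lambda$ and weight $\delta$ into one of shape $(\nu+\kappa)/\lambda$ and weight $\delta+\kappa$ for any $\kappa\in\mathcal{P}_{2n}^{\boxplus}$, without violating the Sundaram condition. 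Granting that this shift commutes with the companion map and the $R$-matrix, so that $\widehat{\mathbf{T}}(\tau_{\kappa})=\widehat{\mathbf{T}}(\tau)=b$, one gets that $V_{b}$ is stable under adding $\mathcal{P}_{2n}^{\boxplus}$; combined with the realizability of the minimal element $\mu_{b}$ (reversing the shift down to the distinguished minimum of Proposition~\ref{Prop_Char_Distin}), this yields $V_{b}=\mu_{b}+\mathcal{P}_{2n}^{\boxplus}$. Finally, factoring out
\[
\sum_{\kappa\in\mathcal{P}_{2n}^{\boxplus}}t^{\left\vert \kappa\right\vert /2}=\frac{1}{\prod_{i=1}^{n}(1-t^{2i})}
\]
(as $\mathcal{P}_{2n}^{\boxplus}$ is the factor-$2$ dilation of $\mathcal{P}_{n}$) and cancelling it against the left-hand side gives the asserted formula.

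I expect the main obstacle to be the compatibility of the $\kappa$-shift with the combinatorial $R$-matrix, i.e., the claim $\widehat{\mathbf{T}}(\tau_{\kappa})=\widehat{\mathbf{T}}(\tau)$, and the associated realizability of $\mu_{b}$ itself rather than merely of some $\mu\in\mu_{b}+\mathcal{P}_{2n}^{\boxplus}$. Unlike the stable case, where every LR tableau occurs and Lemma~\ref{lem6.4} applies verbatim, here one must check that restricting to Sundaram-LR tableaux does not destroy the clean coset structure of $V_{b}$; this is exactly where the concrete realization of the $R$-matrix via the Henriques--Kamnitzer commutor and its interplay with the Sundaram row condition on odd letters enter, as anticipated in the introduction.
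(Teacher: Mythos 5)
Your proposal is correct and takes essentially the same route as the paper: combine Proposition~\ref{br-rules}(3) with Sundaram's rule, transport Sundaram-LR tableaux into $\widehat{D}(\lambda)$ via the companion map and the combinatorial $R$-matrix, gather the pairs $(\nu,\delta)$ into cosets $\mu_b+\mathcal{P}_{2n}^{\boxplus}$, and factor out $\sum_{\kappa\in\mathcal{P}_{2n}^{\boxplus}}t^{|\kappa|/2}=1/\prod_{i=1}^n(1-t^{2i})$. The paper's own proof is exactly this argument in sketch form (it invokes ``similar arguments to those of Section~4,'' relegating the $\kappa$-shift on Sundaram-LR tableaux to a footnote), so the compatibility points you flag at the end are precisely the steps the paper also leaves implicit rather than gaps peculiar to your write-up.
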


\section{Type $C_n$ generalized exponents via the Kwon model}\label{genexpc}

In this section, we refine the results in Sections~\ref{Sec_Cinfinite} and \ref{dist-tab} to the finite type $C_n$, based on Kwon's model for the corresponding branching coefficients \cite{kwoces,kwoldk}. We also need to use a combinatorial map realizing the conjugation symmetry of Littlewood-Richardson coefficients. It turns out that Kwon's model, the version of the conjugation symmetry map used here, and the distinguished tableaux in Section~\ref{subsec_Cinfinite} fit together in a beautiful way. This allows us to express the related statistic in terms of a natural combinatorial labeling of the vertices of weight 0 in the corresponding type $C_n$ crystal of highest weight $\lambda$, namely the corresponding tableaux due to King \cite{King}. In this way, we obtain a more explicit result than the one in Section~\ref{sundaram-lr} in terms of Littlewood-Richardson-Sundaram tableaux. 

\subsection{The Littlewood-Richardson conjugation symmetry}\label{lrcs} 

Consider partitions $\lambda\in {\mathcal P}_n$ and $\delta,\nu\in{\mathcal P}_m$ with $n\le m$. We will exhibit combinatorially the equality of Littlewood-Richardson (LR) coefficients $c_{\lambda,\delta}^\nu=c_{\lambda',\delta'}^{\nu'}$. Throughout, we denote by $\delta^{\rm rev}$ the reverse of $\delta$, namely $\delta^{\rm rev}=(\delta_1^{\rm rev}\le\ldots\le \delta_m^{\rm rev})$, where we add leading 0's if necessary. %We view partitions as weakly increasing sequences, but use the English notation for tableaux. 

Let ${\rm LR}_{\lambda,\delta}^\nu$ denote the set of Littlewood-Richardson tableaux $T$ of shape $\lambda$ and content $\nu/\delta$; in other words, $T\in B_m(\lambda)$ and $H_\delta\otimes T$ is a highest weight element of weight $\nu$, where $H_\delta$ denotes the Yamanouchi tableau of shape $\delta$. We will construct a bijection $T\mapsto T'$ between ${\rm LR}_{\lambda,\delta}^\nu$  and ${\rm LR}_{\lambda',\delta'}^{\nu'}$, where $T'$ is viewed as an element of $B_{\ell(\nu')}(\lambda')$. The construction has the following three steps.

{\em Step {\rm 1}.} Apply the Sch\"utzenberger evacuation \cite{fulyt} (realizing the Lusztig involution) to $T$ within the crystal $B_m(\lambda)$, and obtain $S(T)\in B_m(\lambda)$. 

{\em Step {\rm 2}.} Transpose the tableau $S(T)$ and denote the resulting filling of $\lambda'$ by $S(T)^{\rm{tr}}$.

{\em Step {\rm 3}.} For each $i=1,\ldots,m$, consider in $S(T)^{\rm{tr}}$ the vertical strip of $i$'s, and replace these entries, scanned from northeast to southwest, with $\delta_i^{\rm rev}+1,\,\delta_i^{\rm rev}+2,\,\ldots$, respectively. 

\begin{example}{\rm Let $n=3$, $m=4$, $\lambda=(4,3,1)$, $\nu=(5,4,4,2)$, $\delta=(3,3,1)$, and $\delta^{\rm rev}=(0,1,3,3)$. Consider the following tableau of shape $\nu/\delta$ and content $\lambda$ whose reverse row word is a lattice permutation, and its associated companion tableau $T\in B_4(\lambda)$:
\[\tableau{{}&{}&{}&1&1\\{}&{}&{}&2\\{}&1&2&3\\1&2}\,,\;\;\;\;\;\;\;T=\tableau{1&1&3&4\\2&3&4\\3}\,.\]
The tableau $S(T)\in B_4(\lambda)$ and $S(T)^{\rm{tr}}$ are
\[S(T)=\tableau{1&1&2&2\\2&3&4\\4}\,,\;\;\;\;\;\;\;S(T)^{\rm{tr}}=\tableau{1&2&4\\1&3\\2&4\\2}\,.\]
Step 3 above produces 
\[T'=\tableau{1&2&4\\2&4\\3&5\\4}\]
in $B_5(\lambda')$. One can then check that the same procedure maps $T'$ back to $T$.}
\end{example}

\begin{theorem}\label{lrconjsym} The above map $T\mapsto T'$ is a bijection between ${\rm LR}_{\lambda,\delta}^\nu$  and ${\rm LR}_{\lambda',\delta'}^{\nu'}$.
\end{theorem}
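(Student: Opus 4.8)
The plan is to establish that the map $T\mapsto T'$ is a well-defined bijection between ${\rm LR}_{\lambda,\delta}^\nu$ and ${\rm LR}_{\lambda',\delta'}^{\nu'}$ by decomposing the three constructive steps into known combinatorial operations, verifying each preserves the relevant structure, and then exhibiting an inverse. First I would recast the Littlewood-Richardson data in the most convenient language: a companion tableau $T\in B_m(\lambda)$ lies in ${\rm LR}_{\lambda,\delta}^\nu$ precisely when $H_\delta\otimes T$ is a highest weight element of weight $\nu$, which by the tensor product rule means $\boldsymbol{\varepsilon}(T)\leq\delta$ together with $\boldsymbol{\varphi}(T)=\boldsymbol{\varepsilon}(T)+\nu-\delta$. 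The symmetry $c_{\lambda,\delta}^\nu=c_{\lambda',\delta'}^{\nu'}$ is classical, so the content of the theorem is that \emph{this particular} composite map realizes it bijectively; hence the work is in tracking weights and highest-weight conditions through each step.

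Next I would analyze the three steps individually. Step 1, Sch\"utzenberger evacuation, is an involution on $B_m(\lambda)$ realizing the Lusztig involution, so it sends a highest weight vertex for $H_\delta\otimes(-)$ of weight $\nu$ to the corresponding lowest-weight-type data; the key identity to record is how evacuation exchanges the roles of $\boldsymbol{\varepsilon}$ and $\boldsymbol{\varphi}$ via $w_\circ$, converting the condition ``$H_\delta\otimes T$ is highest weight $\nu$'' into a dual condition on $S(T)$. Step 2, transposition of $S(T)$, is where the conjugation $\lambda\mapsto\lambda'$ enters; I would invoke the standard fact (Lascoux--Sch\"utzenberger / crystal conjugation) that transposing a semistandard tableau intertwines the $\mathfrak{sl}_m$-crystal structure with the conjugate one, exchanging the vertical-strip and horizontal-strip readings and thereby turning the weight $\nu$ into $\nu'$ at the level of shapes. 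Step 3 is a relabeling of the vertical strips of equal entries using the reversed partition $\delta^{\rm rev}$; I would verify directly that replacing the strip of $i$'s (scanned NE to SW) by the consecutive values $\delta_i^{\rm rev}+1,\delta_i^{\rm rev}+2,\dots$ produces a semistandard filling of $\lambda'$ whose companion-tableau interpretation has the correct content $\nu'/\delta'$, so that the output genuinely lies in ${\rm LR}_{\lambda',\delta'}^{\nu'}$.

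Having shown the map lands in the target set, I would prove bijectivity by exhibiting the inverse as the same recipe applied with the roles of the primed and unprimed data swapped, using that evacuation and transposition are involutive and that Step 3 is invertible once $\delta^{\rm rev}$ (equivalently $\delta'$) is known. The cleanest route is to show that applying the whole procedure to $T'$ returns $T$, which the worked example already suggests; formally this reduces to checking that the two relabelings in Step 3 (forward via $\delta^{\rm rev}$ and backward via $(\delta')^{\rm rev}$) are mutually inverse, given the compatibility of transposition with the strip decompositions. Because both source and target are finite sets of the same cardinality $c_{\lambda,\delta}^\nu=c_{\lambda',\delta'}^{\nu'}$, it in fact suffices to prove the map is well-defined and injective.

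The main obstacle I anticipate is Step 3 and its interaction with Step 2. Evacuation and transposition are well-understood involutions with clean crystal-theoretic descriptions, but the relabeling in Step 3 is an ad hoc device designed to repair the content so that the transposed, evacuated tableau becomes a genuine LR (companion) tableau of the conjugate weight. The delicate point is verifying that the NE-to-SW scanning convention, combined with the offsets $\delta_i^{\rm rev}$, yields exactly the semistandard and lattice-word (Yamanouchi) conditions required for membership in ${\rm LR}_{\lambda',\delta'}^{\nu'}$ — in particular that the replacement never violates column-strictness and that the resulting reverse reading word is lattice. I expect this to require a careful, row-by-row bookkeeping argument tying the reversal $\delta^{\rm rev}$ to the way transposition permutes the positions of equal entries, and this is where the bulk of the verification effort will lie.
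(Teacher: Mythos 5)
Your proposal is a plan rather than a proof, and the gap sits exactly at the point you yourself flag as the ``main obstacle'': you never establish that $T'$ actually lies in ${\rm LR}_{\lambda',\delta'}^{\nu'}$ (semistandardness of the relabelled filling of $\lambda'$ together with the highest-weight/lattice condition for $H_{\delta'}\otimes T'$), nor do you prove injectivity or your claim that the same recipe run with primed data inverts the map. Since well-definedness plus invertibility \emph{is} the entire content of the theorem, deferring both to ``careful row-by-row bookkeeping'' leaves nothing proved. Moreover, the supporting fact you invoke in Step 2 --- that transposition intertwines the $\mathfrak{sl}_m$-crystal structure with a conjugate one --- is not correct as stated: $S(T)^{\rm tr}$ is in general not a semistandard tableau (in the paper's own example its first column reads $1,1,2,2$), so it carries no crystal structure in the usual sense, and the interaction of transposition with reading words and crystal operators is precisely the delicate point; it cannot be cited as a standard intertwining fact.

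The paper's proof supplies the idea that is missing from your outline: it identifies the composite map (evacuation, transpose, relabel) with the known conjugation-symmetry bijection $\rho_3$ of Azenhas--Conflitti--Mamede \cite{acmlte}, which is defined on skew LR tableaux of shape $\nu/\delta$ and content $\lambda$, transferred to companion tableaux. The key fact enabling this transfer is that the crystal action of the longest permutation of $S_m$ on the skew LR tableau corresponds to Sch\"utzenberger evacuation of the companion tableau, a consequence of Lascoux's double crystal graph structure on biwords \cite{lasdcg}; bijectivity then comes for free from the known bijectivity of $\rho_3$. If you insist on a self-contained direct verification along your lines, you would essentially have to reprove this correspondence or redo the analysis of \cite{acmlte} --- either way, the work you postponed is where the theorem lives.
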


\begin{proof}
A bijection realizing the conjugation symmetry of the LR coefficients was given on the skew LR tableaux (of shape $\nu/\delta$ and content $\lambda$) as the map $\rho_3$ in \cite{acmlte}. It is not hard to show that on the companion tableau it is described by the above algorithm. The key fact involved here is that the crystal action of the longest permutation in $S_m$ on the skew LR tableau corresponds to the Sch\"utzenberger involution applied to the companion tableau. This fact is well-known to experts, and is based on the so-called ``double crystal graph structure'' on biwords \cite{lasdcg}. According to this, the action of crystal operators on words corresponds to jeu de taquin slides on two-row tableaux, where the latter are involved in the construction of the Sch\"utzenberger involution; see also \cite{acmlte,fulyt} for more details.
\end{proof}

\begin{remarks} {\rm (1) It is easy to see that, if we change $m$ in the above construction, Step 1 is different, but the final result is the same.

(2) It was shown in \cite{fulyt} and \cite{acmlte} that the above map coincides with the maps constructed by: Hanlon-Sundaram \cite{hasblr}, White \cite{whiht}, and Benkart-Sottile-Stroomer \cite{bssts}. In fact, Benkart-Sottile-Stroomer also give a characterization of their map based on Knuth and dual Knuth equivalences. Furthermore, the inverse of the conjugation symmetry map is described by the same procedures~\cite{hasblr}.}
\end{remarks}

\subsection{Kwon's model}

In this section we describe Kwon's {\em spin model} for crystals of classical type \cite{kwoces,kwoldk}, which is also used to express certain branching coefficients, and leads to an interesting branching duality. 

We start with the Lie algebra $\mathfrak{sp}_{2n}$, with the corresponding long simple root being indexed by~$0$. Consider a dominant weight $\lambda\in{\mathcal P}_n$, and let $\Lambda^\mathfrak{sp}(\lambda):=n\Lambda_0^\mathfrak{sp}+\lambda_1'\varepsilon_1+\lambda_2'\varepsilon_2+\ldots$, where $\Lambda_0^\mathfrak{sp}$ is the 0-fundamental weight for $\mathfrak{sp}_\infty$. Kwon first constructs a combinatorial model for the crystal $B(\mathfrak{sp}_\infty,\Lambda^\mathfrak{sp}(\lambda))$, which we now briefly describe.

The model is built on a certain family ${\mathbf T}^\mathfrak{sp}(\lambda,n)$ formed by sequences ${\mathbf T}:=C_1C_2\ldots C_{2n}$ of fillings with positive integers of column shapes. These sequences satisfy the following conditions:
\begin{enumerate}
\item each pair $C_{2i-1}C_{2i}$ is a SSYT of shape $(\lambda_i+\delta_{2i-1}^{\rm rev},\delta_{2i}^{\rm rev})'$, denoted $T_i$, where $\delta$ is some partition in ${\mathcal P}_{2n}^{(1,1)}$, which means that $\delta_{2i-1}=\delta_{2i}$ for $i=1,\ldots,n$;
\item each pair $(T_i,T_{i+1})$ satisfies certain compatibility conditions, see \cite[Definition~3.2]{kwoces}.
\end{enumerate}

For each $i\ge 0$, Kwon defines crystal operators $\widetilde{e}_i$, $\widetilde{f}_i$ on the set of pairs of columns described in (1) above, and then extends them to ${\mathbf T}^\mathfrak{sp}(\lambda,n)$ via the usual tensor product rule. With this structure, it is proved that ${\mathbf T}^\mathfrak{sp}(\lambda,n)$  is isomorphic to the crystal  $B(\mathfrak{sp}_\infty,\Lambda^\mathfrak{sp}(\lambda))$.

Following \cite{kwoces}, we introduce further notation related to the above objects. The left and right columns of $T_i$ defined above are denoted $T_i^L$, $T_i^R$, respectively. The bottom part of $T_i^L$ of height $\lambda_i$ is denoted $T_i^{\rm tail}$; the remaining top part together with $T_i^R$, which form a SSYT of rectangular shape $(\delta_{2i-1}^{\rm rev},\delta_{2i}^{\rm rev})'$, is denoted $T_i^{\rm body}$. In the filling ${\mathbf T}$ the columns are arranged such that ${\mathbf T}^{\rm body}:=(T_1^{\rm body},\ldots,T_n^{\rm body})$ is a filling of the shape $(\delta')^\pi$ denoting the rotation of $\delta'$ by $180^\circ$. Kwon also uses the notation ${\mathbf T}^{\rm tail}:=(T_1^{\rm tail},\ldots,T_n^{\rm tail})$, which is a filling of the shape $\lambda'$. As usual, ${\rm content}({\mathbf T})$ is defined as the sequence $(c_1,c_2,\ldots)$, where $c_i$ is the number of entries $i$ in ${\mathbf T}$. We identify ${\mathbf T}$ with its column word, denoted ${\rm word}({\mathbf T})$, which is obtained by reading the columns from right to left and from top to bottom. Let $L({\mathbf T})$ be the maximal length of a weakly decreasing subword of ${\rm word}({\mathbf T})$.

\begin{lemma}\label{lem1} \cite{kwoces} If $L({\mathbf T})\le n$, then we have
\begin{enumerate}
\item ${\mathbf T}^{\rm body}$ is a SSYT of shape $(\delta')^\pi$ for some $\delta\in {\mathcal P}_{2n}^{(1,1)}$, and ${\mathbf T}^{\rm tail}$ is a SSYT of shape $\lambda'$;
\item ${\mathbf T}\equiv {\mathbf T}^{\rm body}\otimes {\mathbf T}^{\rm tail}$, where $\equiv$ denotes the usual (type $A$) plactic equivalence.
\end{enumerate}
\end{lemma}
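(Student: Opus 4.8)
The plan is to work entirely inside the plactic monoid, reducing both assertions to statements about the single word $\mathrm{word}(\mathbf{T})$ and its Knuth class. The first, essentially formal, observation is that the reading convention already separates body from tail \emph{within} each pair $T_i=C_{2i-1}C_{2i}$. Since $\delta\in\mathcal{P}_{2n}^{(1,1)}$ forces $\delta_{2i-1}^{\rm rev}=\delta_{2i}^{\rm rev}=:a_i$, the tableau $T_i$ has shape $(\lambda_i+a_i,a_i)'=(2^{a_i},1^{\lambda_i})$, so $T_i^R$ has height $a_i$, the left column $T_i^L$ has height $a_i+\lambda_i$, and $T_i^{\rm tail}$ is its bottom $\lambda_i$ cells. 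Reading $T_i$ from right to left and top to bottom gives $T_i^R$, then the top $a_i$ cells of $T_i^L$, then $T_i^{\rm tail}$; the first two blocks are exactly $\mathrm{word}(T_i^{\rm body})$ for the rectangle $T_i^{\rm body}$. Hence one has the \emph{exact} equality of words $\mathrm{word}(T_i)=\mathrm{word}(T_i^{\rm body})\,T_i^{\rm tail}$, and since the columns of $\mathbf{T}$ are read right to left (i.e.\ in the order $i=n,\ldots,1$),
\[
\mathrm{word}(\mathbf{T})=\mathrm{word}(T_n^{\rm body})\,T_n^{\rm tail}\,\mathrm{word}(T_{n-1}^{\rm body})\,T_{n-1}^{\rm tail}\cdots\mathrm{word}(T_1^{\rm body})\,T_1^{\rm tail}\,.
\]
Both parts of the lemma are thereby recast as assertions about this interleaved word.

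For assertion (1), note that each $T_i^{\rm body}$ is a genuine two-column rectangle of shape $(2^{a_i})$, and the heights $a_i=\delta_{2i}^{\rm rev}$ are weakly increasing in $i$ because $\delta^{\rm rev}$ is the increasing rearrangement of $\delta$. The compatibility conditions imposed on consecutive pairs in \cite[Definition~3.2]{kwoces} are designed precisely so that these rectangles glue into a semistandard filling of $(\delta')^\pi$; this is a direct check from the definitions, using that $\delta'$ has even parts. The hypothesis $L(\mathbf{T})\le n$ enters only to guarantee the semistandardness of $\mathbf{T}^{\rm tail}$, whose $i$-th column is $T_i^{\rm tail}$ of height $\lambda_i$: strict increase down columns is automatic, while weak increase across columns is a Greene-type condition. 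Indeed, $\lambda\in\mathcal{P}_n$ forces $\lambda'$ to have at most $n$ columns, and $L(\mathbf{T})\le n$ is exactly the bound on the longest weakly decreasing subword needed to prevent the tails from overflowing this shape; I would make this precise by translating $L(\mathbf{T})\le n$, via Greene's theorem, into a constraint on the rectification shape of $\mathbf{T}$ and matching it against $\lambda'$.

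The heart of the matter is assertion (2), and this is where I expect the real work to lie. One must commute every tail $T_i^{\rm tail}$ to the far right of the displayed word, past all the body words $\mathrm{word}(T_{i-1}^{\rm body}),\ldots,\mathrm{word}(T_1^{\rm body})$ and the intervening tails, using only Knuth relations. I would isolate this in a single \emph{local commutation lemma}: a tail column may be slid to the right of an adjacent body rectangle without changing the plactic class, the point being that the body is a rectangle and the tail entries are constrained relative to it by the compatibility conditions. The length hypothesis $L(\mathbf{T})\le n$ is what keeps each tail a single column of height $\le n$ throughout the process, so that no bumping lengthens it and the slides remain legal; equivalently, one can phrase the whole argument via jeu de taquin, viewing $\mathbf{T}$ as a skew filling and checking that its rectification factors as the plactic product of the rectangles (rectifying to $\mathbf{T}^{\rm body}$) and the tails (sliding off to form $\mathbf{T}^{\rm tail}$). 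The delicate step, and the one I would treat with the most care, is verifying that this tail/body commutation is unobstructed under $L(\mathbf{T})\le n$; once that local statement is established, a straightforward induction on the number $2n$ of columns moves all tails to the right and yields $\mathrm{word}(\mathbf{T})\equiv\mathrm{word}(\mathbf{T}^{\rm body})\,\mathrm{word}(\mathbf{T}^{\rm tail})$, that is, $\mathbf{T}\equiv\mathbf{T}^{\rm body}\otimes\mathbf{T}^{\rm tail}$.
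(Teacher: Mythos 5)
First, a point of comparison: the paper gives no proof of this lemma at all — it is quoted from Kwon \cite{kwoces} — so your proposal must stand on its own. As written, it does not: it is a plan whose two load-bearing steps are asserted rather than proved, and one of its structural claims is false. Your word factorization $\mathrm{word}(\mathbf{T})=\mathrm{word}(T_n^{\rm body})\,\mathrm{word}(T_n^{\rm tail})\cdots\mathrm{word}(T_1^{\rm body})\,\mathrm{word}(T_1^{\rm tail})$ is correct, but everything after it is deferred. Concretely: (i) you claim that the rectangles $T_i^{\rm body}$ glue into a SSYT of $(\delta')^\pi$ by ``a direct check from the definitions,'' with $L(\mathbf{T})\le n$ entering \emph{only} for the tail. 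This is wrong: as the paper states just before Lemma~\ref{lem2}, without the hypothesis $L(\mathbf{T})\le n$ \emph{both} $\mathbf{T}^{\rm body}$ and $\mathbf{T}^{\rm tail}$ can fail to be SSYT of the stated shapes — that failure is the entire reason for Kwon's jeu-de-taquin modification $\mathbf{T}\mapsto\overline{\mathbf{T}}$ in \cite{kwoldk}. Any correct proof must bring the hypothesis, together with the actual inequalities of \cite[Definition~3.2]{kwoces} (which you never state or use), to bear on the body as well. (ii) Your route to the tail via ``Greene's theorem'' is unsound: $L(\mathbf{T})$ is the longest \emph{weakly} decreasing subword, and this statistic is not a plactic invariant — for instance $221\equiv 212$, yet their longest weakly decreasing subwords have lengths $3$ and $2$ — so it cannot be ``translated into a constraint on the rectification shape of $\mathbf{T}$.'' Greene's theorem controls weakly increasing and \emph{strictly} decreasing subwords only; the hypothesis is a condition on the word of $\mathbf{T}$ itself, not on its plactic class.

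Second, you have inverted where the difficulty lies. The part you call ``the heart of the matter,'' assertion (2), is the easy half once (1) is known — this is exactly what the paper says about the analogous Lemma~\ref{lem2}: slide the tail columns horizontally (jeu de taquin through empty cells) from positions $1,3,\ldots,2n-1$ to positions $1,\ldots,n$; this preserves the plactic class, produces a genuine skew SSYT ($\lambda'$ glued under $(\delta')^\pi$), and for a skew SSYT the column word is placticly equivalent to the row word, which visibly factors into the body and tail parts. Your substitute for this — a ``local commutation lemma'' pushing each tail past the bodies to its right by Knuth moves, then an induction — is precisely the step you admit is delicate and never establish; Knuth relations do not permit such reorderings without a genuine argument using the compatibility conditions and the hypothesis. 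Meanwhile assertion (1), which you treat as nearly automatic, is the technically hard core (the paper calls the corresponding statement ``highly technical''). So the proposal, as it stands, proves neither (1) nor (2).
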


Now fix a partition $\nu\in{\mathcal P}_{2n}$. Consider the set ${\rm LR}^\lambda_\nu(\mathfrak{sp}_{2n})$ of type $A$ highest weight elements ${\mathbf T}$ in ${\mathbf T}^\mathfrak{sp}(\lambda,n)$ with ${\rm content}({\mathbf T})=\nu'$; in other words, we have $\widetilde{e}_i({\mathbf T})=0$ for all $i>0$. 

\begin{theorem}\label{branching} \cite{kwoces} The cardinality of ${\rm LR}^\lambda_\nu(\mathfrak{sp}_{2n})$ is equal to the branching coefficient $c^\lambda_\nu(\mathfrak{sp}_{2n})$.
\end{theorem}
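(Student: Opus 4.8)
The plan is to identify $|{\rm LR}^\lambda_\nu(\mathfrak{sp}_{2n})|$ with a multiplicity in the decomposition of Kwon's crystal as a crystal of type $A$, and then to match that multiplicity with the branching coefficient through a character identity. First I would use the crystal isomorphism $\mathbf{T}^{\mathfrak{sp}}(\lambda,n)\cong B(\mathfrak{sp}_\infty,\Lambda^{\mathfrak{sp}}(\lambda))$ recalled above, which lets one work entirely inside the combinatorial model. Restricting attention to the operators $\widetilde{e}_i,\widetilde{f}_i$ with $i>0$ endows $\mathbf{T}^{\mathfrak{sp}}(\lambda,n)$ with the structure of a union of type $A$ crystals; by the general theory, each connected type $A$ component is isomorphic to $B(\eta)$, where $\eta$ is the content of its unique type $A$ highest weight element. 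Hence $|{\rm LR}^\lambda_\nu(\mathfrak{sp}_{2n})|$, being the number of type $A$ highest weight elements of content $\nu'$, is precisely the multiplicity of $B(\nu')$ in this restriction. The theorem thus becomes the character identity
\[
\operatorname{ch}_A\mathbf{T}^{\mathfrak{sp}}(\lambda,n)=\sum_{\nu\in\mathcal{P}_{2n}}c_\nu^\lambda(\mathfrak{sp}_{2n})\,s_{\nu'}(x)\,,
\]
where $\operatorname{ch}_A$ records the content as a symmetric function in the entry variables $x$.

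Next I would make the left-hand side explicit using Lemma~\ref{lem1}. For elements with $L(\mathbf{T})\le n$ --- the range forced by the constraint $\nu\in\mathcal{P}_{2n}$ --- the plactic factorization $\mathbf{T}\equiv\mathbf{T}^{\rm body}\otimes\mathbf{T}^{\rm tail}$ holds, with $\mathbf{T}^{\rm tail}$ of shape $\lambda'$ and $\mathbf{T}^{\rm body}$ of shape $(\delta')^\pi$ for some $\delta\in\mathcal{P}^{(1,1)}_{2n}$. Since type $A$ crystal operators are compatible with plactic equivalence, $\mathbf{T}$ is a type $A$ highest weight element if and only if $\mathbf{T}^{\rm body}\otimes\mathbf{T}^{\rm tail}$ is one, and then its content is $\nu'$. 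Summing over the admissible $\delta$ and counting highest weight elements in the tensor product by the Littlewood-Richardson rule, I would identify the resulting expression with Sundaram's description $c_\nu^\lambda(\mathfrak{sp}_{2n})=\sum_{\delta\in\mathcal{P}^{(1,1)}}\widehat{c}_{\lambda,\delta}^\nu$ from Section~\ref{sundaram-lr}, or equivalently with the stable branching formula corrected to finite rank.

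The main difficulty is twofold. The bookkeeping difficulty is reconciling the transposes, the reversals $\delta^{\rm rev}$, and the $180^\circ$ rotation $(\delta')^\pi$ appearing in the body shapes with the plain $\delta$ of Sundaram's formula, and checking that Kwon's compatibility conditions between consecutive pairs $(T_i,T_{i+1})$ translate exactly into the lattice/Yamanouchi and odd-letter row constraints governing $\widehat{c}_{\lambda,\delta}^\nu$; since the body and tail are not freely independent, the tensor-product count is genuinely constrained. The essential difficulty, however, is showing that the truncation $L(\mathbf{T})\le n$ does more than simplify the combinatorics: it must cut the naive stable count $\sum_{\delta\in\mathcal{P}^{(1,1)}}c_{\lambda,\delta}^\nu$ down to the genuine finite-rank coefficient, reproducing precisely Sundaram's row restriction on odd letters. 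An alternative, less combinatorial route is to prove the character identity directly from the $(\mathfrak{gl},\mathfrak{sp}_\infty)$ Howe duality underlying Kwon's model, in which case the crux shifts to identifying the dual pair and its joint decomposition; I expect the combinatorial route anchored on Lemma~\ref{lem1} to be the more self-contained of the two.
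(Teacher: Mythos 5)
The paper itself offers no proof of this statement: it is imported verbatim from Kwon \cite{kwoces}, and all of Section~\ref{genexpc} is built on top of it. So your proposal has to stand on its own, and it breaks at a concrete step. You apply Lemma~\ref{lem1} to factor a type $A$ highest weight element as $\mathbf{T}\equiv\mathbf{T}^{\rm body}\otimes\mathbf{T}^{\rm tail}$, asserting that the hypothesis $L(\mathbf{T})\le n$ is ``the range forced by the constraint $\nu\in\mathcal{P}_{2n}$''. It is not. Since all occurrences of the letter $1$ in ${\rm word}(\mathbf{T})$ already form a weakly decreasing subword, any $\mathbf{T}$ of content $\nu'$ satisfies $L(\mathbf{T})\ge\nu'_1=\ell(\nu)$; hence the hypothesis of Lemma~\ref{lem1} forces $\nu\in\mathcal{P}_n$, and conversely the paper invokes the lemma only in that range (see the sentence before Theorem~\ref{tailbij}, and the citation of \cite[Remark~5.6]{kwoldk}). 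The paper is explicit that outside this range the lemma fails: ``${\mathbf T}^{\rm body}$ and ${\mathbf T}^{\rm tail}$ are no longer SSYT of the corresponding shapes.'' This is exactly why Theorem~\ref{gentail} exists; it requires Kwon's jeu-de-taquin construction $\mathbf{T}\mapsto\overline{\mathbf{T}}$ and the highly technical Lemma~\ref{lem2} of \cite{kwoldk}, and its outcome is not the full sum $\sum_{\delta}c_{\lambda,\delta}^{\nu}$ but the strictly smaller $\sum_{\delta}\overline{c}_{\lambda,\delta}^{\nu}$, cut down by condition \eqref{condbranching}. Your factorization argument therefore proves at most the stable case $\nu\in\mathcal{P}_n$ (Theorem~\ref{tailbij}); if it were valid for all $\nu\in\mathcal{P}_{2n}$, it would give $c_{\nu}^{\lambda}(\mathfrak{sp}_{2n})=\sum_{\delta\in\mathcal{P}_{2n}^{(1,1)}}c_{\lambda,\delta}^{\nu}$, i.e.\ the stable branching rule in finite rank, which is false.

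The second gap is the step you yourself defer as the ``essential difficulty,'' but it is worse than a hard verification: showing that the finite-rank truncation in Kwon's model reproduces exactly Sundaram's odd-letter row condition, so that the highest weight count matches $\sum_{\delta}\widehat{c}_{\lambda,\delta}^{\nu}$, is precisely the question this paper raises and leaves open in Section~\ref{sunkwo}, where the two rules are compared only through the composite map \eqref{compmaps} on a single example. The two counts do agree, but only because Sundaram's theorem and Kwon's theorem are each proved independently; deducing Kwon's statement from Sundaram's requires exactly the comparison that is not established. Finally, your opening reduction is fine as far as it goes (the type $A$ restriction of Kwon's crystal is a disjoint union of crystals $B(\eta)$, so the number of highest weight elements of content $\nu'$ is the multiplicity of $B(\nu')$), but the resulting character identity $\mathrm{ch}_A\,\mathbf{T}^{\mathfrak{sp}}(\lambda,n)=\sum_{\nu}c_{\nu}^{\lambda}(\mathfrak{sp}_{2n})\,s_{\nu'}$ is not a simplifying reformulation --- it \emph{is} the theorem, and proving it is what Kwon's duality argument in \cite{kwoces} accomplishes. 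As it stands, the proposal contains no complete route to the statement beyond the stable case.
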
 

Considering ${\mathbf T}$ in ${\rm LR}^\lambda_\nu(\mathfrak{sp}_{2n})$, we have by definition ${\mathbf T}\equiv H_{\nu'}$. Thus, in the special case $\nu\in{\mathcal P}_{n}$, it follows from Lemma~\ref{lem1} that ${\mathbf T}^{\rm body}\equiv {H}_{\delta'}$ and ${\mathbf T}^{\rm tail}\in{\rm LR}_{\lambda',\delta'}^{\nu'}$, for some $\delta\in{\mathcal P}_{2n}^{(1,1)}$. Here and throughout, we use implicitly the 
fact that the crystal operators preserve the plactic equivalence. Based on the above facts, the following result is proved.

\begin{theorem}\label{tailbij} \cite{kwoces} Assume $\nu\in{\mathcal P}_{n}$. The map ${\mathbf T}\mapsto {\mathbf T}^{\rm tail}$ is a bijection 
\[{\rm LR}^\lambda_\nu(\mathfrak{sp}_{2n})\longrightarrow \bigsqcup_{\delta\in{\mathcal P}_{2n}^{(1,1)}}{\rm LR}_{\lambda',\delta'}^{\nu'}\,.\]
\end{theorem}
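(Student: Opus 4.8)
The plan is to establish Theorem~\ref{tailbij} by verifying that the map ${\mathbf T}\mapsto {\mathbf T}^{\rm tail}$ is well-defined into the claimed codomain, and then exhibiting an explicit inverse, using the structural facts recorded just before the statement. First I would unpack what membership in ${\rm LR}^\lambda_\nu(\mathfrak{sp}_{2n})$ means combinatorially: since ${\mathbf T}$ is a type $A$ highest weight element with ${\rm content}({\mathbf T})=\nu'$ and $\nu\in{\mathcal P}_n$, we have $L({\mathbf T})\le n$, so Lemma~\ref{lem1} applies and gives both that ${\mathbf T}^{\rm body}$ is a SSYT of shape $(\delta')^\pi$ for some $\delta\in{\mathcal P}_{2n}^{(1,1)}$, and that ${\mathbf T}\equiv {\mathbf T}^{\rm body}\otimes {\mathbf T}^{\rm tail}$ under type $A$ plactic equivalence. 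The highest weight condition ${\mathbf T}\equiv H_{\nu'}$ combined with the tensor factorization forces ${\mathbf T}^{\rm body}\equiv H_{\delta'}$ and ${\mathbf T}^{\rm tail}\in{\rm LR}_{\lambda',\delta'}^{\nu'}$, exactly as the text observes; this is what shows the map lands in the disjoint union, with the index $\delta$ determined by the body.

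The heart of the argument is the construction of the inverse map. Given $\delta\in{\mathcal P}_{2n}^{(1,1)}$ and a tableau $U\in{\rm LR}_{\lambda',\delta'}^{\nu'}$, I would reconstruct a full sequence ${\mathbf T}=C_1C_2\ldots C_{2n}$ of columns. The body part is canonical: because only $U(={\mathbf T}^{\rm tail})$ carries information beyond the fixed shape data, one takes ${\mathbf T}^{\rm body}$ to be the unique Yamanouchi (highest weight) filling $H_{\delta'}$ arranged on the rotated shape $(\delta')^\pi$, which is forced by the constraint ${\mathbf T}^{\rm body}\equiv H_{\delta'}$. The tail $U$ supplies the bottom height-$\lambda_i$ portions $T_i^{\rm tail}$ of the left columns. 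The nontrivial point is that reassembling these pieces column by column yields a sequence ${\mathbf T}$ that actually satisfies conditions (1) and (2) defining ${\mathbf T}^\mathfrak{sp}(\lambda,n)$ — in particular, that each pair $C_{2i-1}C_{2i}$ is a genuine SSYT of the prescribed shape $(\lambda_i+\delta_{2i-1}^{\rm rev},\delta_{2i}^{\rm rev})'$, and that the adjacent compatibility condition of \cite[Definition~3.2]{kwoces} holds. Verifying these shape and compatibility constraints, and confirming that the reassembled ${\mathbf T}$ is again a type $A$ highest weight element of content $\nu'$, is where the genuine work lies.

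The main obstacle I anticipate is precisely this reassembly step: one must show that the tail data $U$ together with the canonical body $H_{\delta'}$ uniquely and consistently determines all $2n$ columns, and that the inter-column compatibility conditions are automatically satisfied rather than imposing extra constraints. The cleanest route is to argue that ${\mathbf T}$ is reconstructed as the unique preimage under the plactic factorization ${\mathbf T}\equiv {\mathbf T}^{\rm body}\otimes {\mathbf T}^{\rm tail}$ that is simultaneously a highest weight element, invoking the fact that crystal operators preserve plactic equivalence (used implicitly in the excerpt) to transport the highest weight property between ${\mathbf T}$ and its factors. Once both directions are seen to be well-defined, the inverse relations ${\mathbf T}\mapsto{\mathbf T}^{\rm tail}\mapsto{\mathbf T}$ follow formally from the uniqueness built into each construction, so I would close by checking that the two composites are identities on the respective sets, completing the proof that the map is a bijection.
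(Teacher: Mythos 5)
Your forward direction matches the paper's own discussion: for $\nu\in{\mathcal P}_n$ one has $L({\mathbf T})\le n$ (which you, like the paper, assert rather than prove -- this is Kwon's observation, cf.\ \cite[Remark~5.6]{kwoldk}), then Lemma~\ref{lem1} together with the standard tensor-product highest-weight argument gives ${\mathbf T}^{\rm body}\equiv H_{\delta'}$ and ${\mathbf T}^{\rm tail}\in{\rm LR}_{\lambda',\delta'}^{\nu'}$. Injectivity is also fine: since ${\mathbf T}^{\rm body}$ has the anti-normal shape $(\delta')^\pi$, plactic equivalence to $H_{\delta'}$ forces it to be the canonical filling whose $i$-th column is $(1<2<\cdots<\delta_i^{\rm rev})$, so ${\mathbf T}$ is recoverable from its tail.

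The genuine gap is in your inverse map. To glue column $i$ of $U\in{\rm LR}_{\lambda',\delta'}^{\nu'}$ under the body column $(1<2<\cdots<\delta_{2i-1}^{\rm rev})$ and obtain a legitimate column of an element of ${\mathbf T}^\mathfrak{sp}(\lambda,n)$ -- columns must be strictly increasing -- you need the top entry $r_i$ of that column of $U$ to satisfy $r_i>\delta_{2i-1}^{\rm rev}$. This is precisely condition \eqref{condbranching}, which the paper uses to define the proper subset $\overline{\rm LR}_{\lambda',\delta'}^{\nu'}\subseteq{\rm LR}_{\lambda',\delta'}^{\nu'}$. For general $\nu\in{\mathcal P}_{2n}$ this condition genuinely cuts down the image (that is the whole point of Theorem~\ref{gentail}, and the example in Section~\ref{sunkwo} exhibits tableaux violating it), so your reconstruction is not even well-defined on all of ${\rm LR}_{\lambda',\delta'}^{\nu'}$ unless one proves that \eqref{condbranching} holds automatically when $\nu\in{\mathcal P}_n$ -- a nontrivial combinatorial fact which your proposal never identifies, let alone establishes. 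Your suggested remedy, uniqueness of the plactic factorization plus the fact that crystal operators preserve plactic equivalence, cannot close this hole: plactic equivalence is blind to the column-strictness and shape constraints, and to the pairwise compatibility conditions of \cite[Definition~3.2]{kwoces}, so membership of the glued object in ${\mathbf T}^\mathfrak{sp}(\lambda,n)$ requires a separate combinatorial check. Note that the paper itself does not reprove the statement under review -- it quotes it from \cite{kwoces} -- and its own contribution, Theorem~\ref{gentail}, handles exactly the issue you skipped, by correcting the codomain to $\bigsqcup_{\delta}\overline{\rm LR}_{\lambda',\delta'}^{\nu'}$ and then explicitly verifying Kwon's conditions for the glued filling together with a jeu-de-taquin column-sliding claim.
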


As $c_{\lambda',\delta'}^{\nu'}=c_{\lambda,\delta}^\nu$, Theorem~\ref{tailbij} gives a simple combinatorial realization of the well-known stable branching rule (for $\nu\in{\mathcal P}_{n}$):
\[c^\lambda_\nu(\mathfrak{sp}_{2n})=\sum_{\delta\in{\mathcal P}_{2n}^{(1,1)}}c_{\lambda,\delta}^{\nu}\,.\]

Without the assumption $L({\mathbf T})\le n$, Lemma~\ref{lem1} fails, i.e., ${\mathbf T}^{\rm body}$ and ${\mathbf T}^{\rm tail}$ are no longer SSYT of the corresponding shapes. Kwon addresses this complication in \cite[Section~5]{kwoldk}, by first mapping ${\mathbf T}=C_1C_2\ldots C_{2n}$ to a new filling $\overline{\mathbf T}$. The construction is based on jeu de taquin on successive columns, which is used to perform the following operations in the indicated order: 
\begin{itemize}
\item move $\lambda_2$ entries from column $C_3$ to the second column;
\item move $\lambda_3$ entries from column $C_5$ to the third column (past the fourth column in-between);
\item continue in this fashion, and end by moving $\lambda_n$ entries from column $C_{2n-1}$ to the $n$-th column (past the columns in-between).
\end{itemize}
It is easy to see that the above operations can always be performed. The shape of the filling $\overline{\mathbf T}$ is a skew Young diagram, obtained by gluing $\lambda'$ to the bottom of $(\delta')^\pi$, such that their first columns are aligned (we view $(\delta')^\pi$ as a diagram with $2n$ columns, where possibly the leading ones have length $0$). The fillings of shapes $\lambda'$ and $(\delta')^\pi$ are denoted $\overline{\mathbf T}^{\rm tail}$ and $\overline{\mathbf T}^{\rm body}$, respectively. We have an analogue of Lemma~\ref{lem1}.

\begin{lemma}\label{lem2} \cite{kwoldk} The following hold:
\begin{enumerate}
\item $\overline{\mathbf T}^{\rm body}$ is a SSYT of shape $(\delta')^\pi$ for some $\delta\in {\mathcal P}_{2n}^{(1,1)}$, and $\overline{\mathbf T}^{\rm tail}$ is a SSYT of shape $\lambda'$;
\item ${\mathbf T}\equiv\overline{\mathbf T}\equiv \overline{\mathbf T}^{\rm body}\otimes \overline{\mathbf T}^{\rm tail}\,.$
\end{enumerate}
\end{lemma}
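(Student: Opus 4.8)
The plan is to use jeu de taquin (jdt) as the single engine behind both assertions, since by construction $\overline{\mathbf{T}}$ is obtained from $\mathbf{T}$ by a composition of jdt slides on successive columns. The first plactic equivalence $\mathbf{T}\equiv\overline{\mathbf{T}}$ in (2) then needs no semistandardness at all: every elementary move relocating entries from $C_{2i-1}$ to the $i$-th column is an inward jdt slide on the column word, and jdt slides preserve Knuth (plactic) equivalence of reading words. Before invoking this I would record that the moves are well-defined, i.e., that at each stage there is room to slide $\lambda_i$ entries from $C_{2i-1}$ into the $i$-th column past the intervening columns. This is the bookkeeping the text calls ``easy to see'': the tail of $T_i$ has height $\lambda_i$, after the earlier moves the bodies already occupy the top rows as an initial segment of $(\delta')^\pi$, and the heights $\lambda_1\ge\lambda_2\ge\cdots$ are weakly decreasing, so the staircase of vacant cells beneath the body always accommodates the incoming block. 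The same accounting pins down the final shape: the bodies assemble into $(\delta')^\pi$ on top, the tails into the left-justified columns of heights $\lambda_1,\lambda_2,\ldots$, that is $\lambda'$, glued below with first columns aligned.

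The real work, and what I expect to be the crux, is assertion (1): that $\overline{\mathbf{T}}^{\rm body}$ and $\overline{\mathbf{T}}^{\rm tail}$ are genuinely SSYT of shapes $(\delta')^\pi$ and $\lambda'$, with $\delta\in\mathcal{P}_{2n}^{(1,1)}$. This is precisely the place where Lemma~\ref{lem1}'s hypothesis $L(\mathbf{T})\le n$ has been dropped, and one cannot simply recover it by enlarging the rank, since a long weakly decreasing subword of $\mathbf{T}$ survives any padding; so a direct analysis is unavoidable. I would establish semistandardness by propagating Kwon's pairwise compatibility conditions \cite[Definition~3.2]{kwoces} through the slides. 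Each $T_i$ is already a two-column SSYT, so column-strictness inside a relocated block is automatic, and the only inequalities in doubt are those across the seam where the $\lambda_i$ tail entries of $C_{2i-1}$ come to rest immediately to the right of the previously placed material. I would verify the row-weak and column-strict inequalities across each such seam by tracking, through the jdt, the relative order of the tail of $T_i$ against the body and tail of $T_{i-1}$; the point is to convert Kwon's spin compatibility between consecutive pairs into the ordinary SSYT inequalities of the rearranged skew shape.

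Granting (1), the second equivalence $\overline{\mathbf{T}}\equiv\overline{\mathbf{T}}^{\rm body}\otimes\overline{\mathbf{T}}^{\rm tail}$ is the standard horizontal-decomposition fact for skew semistandard tableaux: the body occupies the top and the tail the bottom of a single skew SSYT, so sliding the tail block entirely clear of the body (once more by jdt, hence preserving the plactic class) leaves the reading word Knuth equivalent to the concatenation of $\,{\rm word}(\overline{\mathbf{T}}^{\rm body})$ and $\,{\rm word}(\overline{\mathbf{T}}^{\rm tail})$ in the order dictated by the tensor convention. Combined with $\mathbf{T}\equiv\overline{\mathbf{T}}$ from the first paragraph, this yields the full chain in (2). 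I therefore expect the decisive difficulty to lie entirely in the seam analysis of the middle paragraph, where the spin compatibility conditions must be translated into SSYT inequalities after the slides; the two plactic equivalences are then formal consequences of the jdt machinery, exactly paralleling the proof of Lemma~\ref{lem1} in the case $L(\mathbf{T})\le n$.
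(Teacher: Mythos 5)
Your treatment of assertion (2) is fine and coincides with the paper's own remark: jeu de taquin preserves the plactic class, and the row and column words of a skew SSYT are placticly equivalent, so (2) is formal once (1) is in hand. The problem is assertion (1), which you yourself identify as the crux and then do not prove. The paper does not prove it either: Lemma~\ref{lem2} is quoted from \cite[Section~5]{kwoldk}, and the text explicitly warns that the proof of part (1) there is highly technical. Your middle paragraph is a plan (``I would verify the row-weak and column-strict inequalities across each seam by tracking, through the jdt, the relative order\ldots''), not an argument; nothing in it tells us what the compatibility conditions of \cite[Definition~3.2]{kwoces} actually give after a slide, nor how an inductive invariant is maintained as the slides for $i=2,3,\ldots,n$ are performed in succession.

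Moreover, the framing on which your plan rests is an oversimplification that fails in the generality of Lemma~\ref{lem2}. You assume the jdt slides relocate the $\lambda_i$ tail entries of $C_{2i-1}$ as an intact block, so that column-strictness inside the block is ``automatic'' and only seam inequalities are in doubt. In general a jdt slide past the intervening columns involves vertical moves that interleave body and tail entries; the clean horizontal block-sliding behavior is established in this paper only for highest weight elements $\mathbf{T}\in{\rm LR}^\lambda_\nu(\mathfrak{sp}_{2n})$ (in the proof of Theorem~\ref{gentail}), and the argument there depends crucially on the columns of the body having the special form $(1<2<\ldots<\delta_i^{\rm rev})$ — a property a general element of $\mathbf{T}^\mathfrak{sp}(\lambda,n)$ does not have. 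Finally, assertion (1) also claims that the resulting body shape is $(\delta')^\pi$ for some $\delta\in\mathcal{P}_{2n}^{(1,1)}$, i.e., that the paired columns of the body end up with equal lengths; your sketch never addresses why this pairing survives the slides. These are precisely the points that make Kwon's proof technical, and they are missing here.
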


The difficulty lies in the first part of this lemma, whose proof is highly technical. The second part follows from the first one simply by noting that jeu de taquin is compatible with the plactic equivalence, and that the row and column words of a skew SSYT are placticly equivalent. 

In \cite[Remark~5.6]{kwoldk} it is observed that, if $L({\mathbf T})\le n$ (in particular, if ${\mathbf T}\in{\rm LR}^\lambda_\nu(\mathfrak{sp}_{2n})$ and $\nu\in{\mathcal P}_{n}$), then we have $\overline{\mathbf T}^{\rm body}={\mathbf T}^{\rm body}$ and $\overline{\mathbf T}^{\rm tail}={\mathbf T}^{\rm tail}$, so Lemma~\ref{lem1} is a special case of Lemma~\ref{lem2}. In fact, we can show that the mentioned equalities also hold for the elements of ${\rm LR}^\lambda_\nu(\mathfrak{sp}_{2n})$, for any $\nu\in{\mathcal P}_{2n}$. This leads to the following generalization of Theorem~\ref{tailbij}. To state it, we define $\overline{\rm LR}_{\lambda',\delta'}^{\nu'}$ to be the subset of ${\rm LR}_{\lambda',\delta'}^{\nu'}$ consisting of fillings ${S}$ with the following property: denoting the first row of ${S}$ by $(r_1\le\ldots\le r_p)$, for $p\le n$, we have 
\begin{equation}\label{condbranching}r_i>\delta_{2i-1}^{\rm rev}=\delta_{2i}^{\rm rev}\;\;\;\;\;\mbox{for $i=1,\ldots,p$}\,.\end{equation}
Let $\overline{c}_{\lambda,\delta}^\nu$ be the cardinality of $\overline{\rm LR}_{\lambda',\delta'}^{\nu'}$. 

\begin{theorem}\label{gentail} Consider ${\mathbf T}$ in ${\rm LR}^\lambda_\nu(\mathfrak{sp}_{2n})$, and let $(\delta')^\pi$ be the shape of ${\mathbf T}^{\rm body}$. 

{\rm (1)} We have ${\mathbf T}^{\rm body}\equiv {H}_{\delta'}$ and ${\mathbf T}^{\rm tail}\in \overline{\rm LR}_{\lambda',\delta'}^{\nu'}$.  

{\rm (2)} The map ${\mathbf T}\mapsto {\mathbf T}^{\rm tail}$ is an injection 
\[{\rm LR}^\lambda_\nu(\mathfrak{sp}_{2n})\lhook\joinrel\xrightarrow{\;\;\;\;\;\;} \bigsqcup_{\delta\in{\mathcal P}_{2n}^{(1,1)}}{\rm LR}_{\lambda',\delta'}^{\nu'}\,,\]
and its image is $\bigsqcup_{\delta\in{\mathcal P}_{2n}^{(1,1)}}\overline{\rm LR}_{\lambda',\delta'}^{\nu'}$. 
\end{theorem}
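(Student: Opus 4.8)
The plan is to reduce the entire statement to a single combinatorial fact about the elements of ${\rm LR}^\lambda_\nu(\mathfrak{sp}_{2n})$: that Kwon's rectifying construction ${\mathbf T}\mapsto\overline{\mathbf T}$ acts \emph{trivially} on the body/tail decomposition of such elements, i.e. ${\mathbf T}^{\rm body}=\overline{\mathbf T}^{\rm body}$ and ${\mathbf T}^{\rm tail}=\overline{\mathbf T}^{\rm tail}$, even when $L({\mathbf T})>n$ so that Lemma~\ref{lem1} does not apply directly. Equivalently, for ${\mathbf T}\in{\rm LR}^\lambda_\nu(\mathfrak{sp}_{2n})$ the naive splitting of each pair $T_i$ already yields an SSYT ${\mathbf T}^{\rm body}$ of shape $(\delta')^\pi$ and an SSYT ${\mathbf T}^{\rm tail}$ of shape $\lambda'$. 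Once this is granted, Lemma~\ref{lem2}(2) gives ${\mathbf T}\equiv {\mathbf T}^{\rm body}\otimes {\mathbf T}^{\rm tail}$ with both factors genuine tableaux, and the remaining assertions follow from standard plactic/crystal arguments together with a direct inspection of column heights.

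First I would establish these key equalities, which I expect to be the main obstacle. The input is that ${\mathbf T}$ is a type $A$ highest weight element, so its column word is a lattice (Yamanouchi) word of content $\nu'$. I would trace the jeu de taquin slides in Kwon's definition of $\overline{\mathbf T}$ (which move $\lambda_j$ tail entries out of $C_{2j-1}$ into the $j$-th column, for $j=2,\ldots,n$) and argue that the lattice condition forces each slid entry to travel horizontally into its target cell, so that no entry ever crosses the body/tail boundary and the collected tail coincides, as an SSYT, with the naive one. Concretely, the lattice property should pin down each $T_i^{\rm body}$ as the unique highest weight filling of the $\delta_{2i-1}^{\rm rev}\times 2$ rectangle, and this rigidity is exactly what makes the rectification vacuous. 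This is the one place where the hypothesis $L({\mathbf T})\le n$ of Lemma~\ref{lem1} is replaced by the highest weight hypothesis, and it rests on a careful reading of the (highly technical) construction underlying Lemma~\ref{lem2}.

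With the equalities in hand, the first assertions of part (1) reduce to the argument already used for Theorem~\ref{tailbij}: from ${\mathbf T}\equiv H_{\nu'}$ and ${\mathbf T}\equiv {\mathbf T}^{\rm body}\otimes {\mathbf T}^{\rm tail}$, with both factors now SSYT of shapes $(\delta')^\pi$ and $\lambda'$, one reads off ${\mathbf T}^{\rm body}\equiv H_{\delta'}$ and ${\mathbf T}^{\rm tail}\in{\rm LR}_{\lambda',\delta'}^{\nu'}$. The only genuinely new point is the refinement \eqref{condbranching}. Writing the first row of ${\mathbf T}^{\rm tail}$ as $(r_1\le\ldots\le r_p)$ with $p=\lambda_1'\le n$, the entry $r_i$ is the top cell of the column $T_i^{\rm tail}$, i.e. the $(\delta_{2i-1}^{\rm rev}+1)$-st entry from the top of the strictly increasing column $T_i^L$; hence $r_i\ge \delta_{2i-1}^{\rm rev}+1>\delta_{2i-1}^{\rm rev}=\delta_{2i}^{\rm rev}$, which is precisely \eqref{condbranching}. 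Thus ${\mathbf T}^{\rm tail}\in\overline{\rm LR}_{\lambda',\delta'}^{\nu'}$, proving part (1).

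Finally, for part (2) I would exhibit the inverse map. By part (1) the map ${\mathbf T}\mapsto {\mathbf T}^{\rm tail}$ lands in $\bigsqcup_{\delta}\overline{\rm LR}_{\lambda',\delta'}^{\nu'}$. Conversely, given $S\in\overline{\rm LR}_{\lambda',\delta'}^{\nu'}$, the partition $\delta$ is recovered from the content of $S$, the body is forced to be $H_{\delta'}$ (the unique highest weight filling of $(\delta')^\pi$), and I reconstruct ${\mathbf T}$ by appending, for each $i$, the $i$-th column of $S$ below the left column of the $i$-th rectangle of the body, thereby rebuilding the pairs $T_i=C_{2i-1}C_{2i}$. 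Condition \eqref{condbranching} is exactly what guarantees that each reconstructed column $T_i^L$ is strictly increasing at the body/tail junction; one then checks that the resulting sequence satisfies Kwon's compatibility conditions and has column word placticly equivalent to $H_{\delta'}\otimes S\equiv H_{\nu'}$, so that ${\mathbf T}\in{\rm LR}^\lambda_\nu(\mathfrak{sp}_{2n})$ with tail $S$. Since this reconstruction is a two-sided inverse of ${\mathbf T}\mapsto {\mathbf T}^{\rm tail}$ on $\bigsqcup_{\delta}\overline{\rm LR}_{\lambda',\delta'}^{\nu'}$, the map is injective with the asserted image.
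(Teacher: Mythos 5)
Your overall architecture is the same as the paper's: everything reduces to the key equalities ${\mathbf T}^{\rm body}=\overline{\mathbf T}^{\rm body}$ and ${\mathbf T}^{\rm tail}=\overline{\mathbf T}^{\rm tail}$ for ${\mathbf T}\in{\rm LR}^\lambda_\nu(\mathfrak{sp}_{2n})$; your strictness argument for \eqref{condbranching} is the paper's observation that the columns of ${\mathbf T}$ are strictly increasing, and your reconstruction in part (2) is essentially the paper's surjectivity argument. The genuine gap is in your proof of the key equalities. You propose to trace the \emph{forward} procedure ${\mathbf T}\mapsto\overline{\mathbf T}$ and assert that the lattice property of ${\rm word}({\mathbf T})$ ``pins down each $T_i^{\rm body}$ as the unique highest weight filling'' of its rectangle, so that the rectification is vacuous; but you give no proof of this, explicitly deferring to ``a careful reading'' of Kwon's construction. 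Worse, the asserted mechanism is insufficient: the lattice condition on the full column word does \emph{not} locally force the naive body columns to be initial segments, because tail entries and taller bodies that are read earlier can supply the counts needed to make a ``gapped'' column lattice-admissible. For instance, take $\delta^{\rm rev}=(2,2,3,3)$, $\lambda=(2,1)$, and columns (read right to left) $(1,2,3)$, $(1,2,3\,|\,4)$, $(1,4)$, $(1,2\,|\,5,6)$, where the bars separate body from tail: each pair is semistandard of the required shape and the full word $1,2,3,1,2,3,4,1,4,1,2,5,6$ is lattice, yet the body column $(1,4)$ is not an initial segment. Ruling out such configurations requires Kwon's compatibility conditions (\cite[Definition~3.2]{kwoces}), which you never invoke, or a global argument; so your key step remains unproved.

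The paper circumvents exactly this difficulty by arguing in the opposite direction. It first applies Lemma~\ref{lem2} to $\overline{\mathbf T}$: since ${\mathbf T}\equiv H_{\nu'}$ is highest weight, one gets $\overline{\mathbf T}^{\rm body}\equiv H_{\delta'}$ \emph{formally}, hence the columns of $\overline{\mathbf T}^{\rm body}$ are the initial segments $(1<2<\cdots<\delta_i^{\rm rev})$ --- rigidity is established for $\overline{\mathbf T}$, where it is automatic, rather than for ${\mathbf T}$, where it is not. It then proves that the \emph{reverse} procedure $\overline{\mathbf T}\mapsto{\mathbf T}$ restricts to horizontal jeu de taquin moves, via an elementary column lemma: if columns $(1<\cdots<k<c_1<\cdots<c_s)$ and $(1<\cdots<l)$ with $k\le l$ are such that $s$ entries can be moved from the first to the second, then $c_1>l$ (otherwise aligning the columns shows at most $s-1$ entries can move, a contradiction); consequently the tail entries transfer unchanged and the two decompositions coincide. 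This column lemma, or some substitute invoking Kwon's compatibility conditions, is the concrete combinatorial content missing from your sketch; without it your proposal does not constitute a proof.
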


\begin{proof} Consider the filling $\overline{\mathbf T}$ obtained from ${\mathbf T}$ via the procedure described above. Since ${\mathbf T}\equiv H_{\nu'}$, it follows that $\overline{\mathbf T}^{\rm body}\equiv H_{\delta'}$ and $\overline{\mathbf T}^{\rm tail}\in{\rm LR}_{\lambda',\delta'}^{\nu'}$, by Lemma~\ref{lem2}. Thus, the $i$-th column of the SSYT $\overline{\mathbf T}^{\rm body}$ is $(1<2<\ldots<\delta_i^{\rm rev})$, for $i=1,\ldots,2n$. 

The procedure ${\mathbf T}\mapsto\overline{\mathbf T}$, which is based on jeu de taquin on successive columns, is reversible. We claim that this reverse procedure $\overline{\mathbf T}\mapsto{\mathbf T}$ simply slides the columns of $\overline{\mathbf T}^{\rm tail}$ horizontally (that is, restricts to horizontal jeu de taquin moves) from positions $1,\ldots,n$ within $\overline{\mathbf T}$ to positions $1,3,\ldots,2n-1$, respectively, while the columns of $\overline{\mathbf T}^{\rm body}$ do not move (recall that the columns of $\overline{\mathbf T}^{\rm tail}$ and ${\mathbf T}^{\rm tail}$ within $\overline{\mathbf T}$ and ${\mathbf T}$ have their top entries on the same row). This means that $\overline{\mathbf T}^{\rm body}={\mathbf T}^{\rm body}$ and $\overline{\mathbf T}^{\rm tail}={\mathbf T}^{\rm tail}$. Therefore, the map ${\mathbf T}\mapsto {\mathbf T}^{\rm tail}$ is the desired injection. Moreover, the image of this map is contained in $\bigsqcup_{\delta\in{\mathcal P}_{2n}^{(1,1)}}\overline{\rm LR}_{\lambda',\delta'}^{\nu'}$ because the columns of ${\mathbf T}$ are strictly increasing.

The proof of the above claim is based on the following fact. Consider columns $(1<2<\ldots<k<c_1<\ldots<c_s)$ and $(1<2<\ldots<l)$ with $k\le l$, and assume that we can move $s$ entries from the first one to the second one via jeu de taquin. To do this, we start by aligning the two columns such that they form a skew SSYT, and this can be done by placing $k\le l$ in the same row. We claim that $c_1>l$, which implies that the resulting columns are $(1<2<\ldots<k)$ and $(1<2<\ldots<l<c_1<\ldots<c_s)$, as needed. Indeed, if $c_1\le l$, then $k\le l-1$, $k-1\le l-2$, etc., so we can align the two initial columns such that all the mentioned pairs are in the same rows. But then at most $s-1$ entries can move from the first column to the second one, which is a contradiction. 

It remains to prove that any filling ${S}\in\overline{\rm LR}_{\lambda',\delta'}^{\nu'}$ is in the image of the given map. Consider the SSYT whose $i$-th column is $(1<2<\ldots<\delta_i^{\rm rev})$, and glue the columns of ${S}$ to the bottom of the columns of the former in positions $1,3,5,\ldots$. It is easy to check that the resulting filling ${\mathbf T}$ satisfies the conditions in \cite[Definition~3.2]{kwoces}, so ${\mathbf T}\in{\mathbf T}^\mathfrak{sp}(\lambda,n)$. Now observe that the procedure ${\mathbf T}\mapsto\overline{\mathbf T}$ consists of sliding the columns of ${S}$ within ${\mathbf T}$ horizontally, as far left as possible, which means that $\overline{\mathbf T}^{\rm body}={\mathbf T}^{\rm body}$ and $\overline{\mathbf T}^{\rm tail}={\mathbf T}^{\rm tail}$. By Lemma~\ref{lem2}~(2), it follows that ${\mathbf T}\equiv H_{\delta'}\otimes {S}$. The latter
 is a highest weight element, as ${S}\in{\rm LR}_{\lambda',\delta'}^{\nu'}$, and this implies ${\mathbf T}\in{\rm LR}^\lambda_\nu(\mathfrak{sp}_{2n})$.
\end{proof}

By combining Theorems~\ref{branching} and \ref{gentail}, we obtain a simple combinatorial description of the branching coefficient $c^\lambda_\nu(\mathfrak{sp}_{2n})$ in full generality.

\begin{corollary}\label{kwonbrrule} We have
\[c^\lambda_\nu(\mathfrak{sp}_{2n})=\sum_{\delta\in{\mathcal P}_{2n}^{(1,1)}}\overline{c}_{\lambda,\delta}^{\nu}\,.\]
\end{corollary}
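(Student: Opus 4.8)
The plan is to combine Theorems~\ref{branching} and \ref{gentail}, so that the corollary reduces to a matter of counting. First I would apply Theorem~\ref{branching} to rewrite the branching coefficient as a cardinality, namely $c^\lambda_\nu(\mathfrak{sp}_{2n})=\mathrm{card}\,{\rm LR}^\lambda_\nu(\mathfrak{sp}_{2n})$, where ${\rm LR}^\lambda_\nu(\mathfrak{sp}_{2n})$ is the set of type $A$ highest weight elements of content $\nu'$ in Kwon's model ${\mathbf T}^\mathfrak{sp}(\lambda,n)$.

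Next I would invoke Theorem~\ref{gentail}~(2), which asserts that the tail map ${\mathbf T}\mapsto {\mathbf T}^{\rm tail}$ is an injection of ${\rm LR}^\lambda_\nu(\mathfrak{sp}_{2n})$ into $\bigsqcup_{\delta\in{\mathcal P}_{2n}^{(1,1)}}{\rm LR}_{\lambda',\delta'}^{\nu'}$ whose image is exactly $\bigsqcup_{\delta\in{\mathcal P}_{2n}^{(1,1)}}\overline{\rm LR}_{\lambda',\delta'}^{\nu'}$. An injection onto its image is a bijection, so the tail map restricts to a bijection
\[
{\rm LR}^\lambda_\nu(\mathfrak{sp}_{2n})\;\longrightarrow\;\bigsqcup_{\delta\in{\mathcal P}_{2n}^{(1,1)}}\overline{\rm LR}_{\lambda',\delta'}^{\nu'}\,,
\]
and in particular the two sides have equal cardinality.

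Finally I would unpack the cardinality of the target. Because the union is genuinely disjoint---the index $\delta$ is recorded as the shape $(\delta')^\pi$ of the body ${\mathbf T}^{\rm body}$, so each tail lands in a single summand---the count is additive over $\delta$, giving $\mathrm{card}\bigsqcup_{\delta}\overline{\rm LR}_{\lambda',\delta'}^{\nu'}=\sum_{\delta\in{\mathcal P}_{2n}^{(1,1)}}\mathrm{card}\,\overline{\rm LR}_{\lambda',\delta'}^{\nu'}$. By the definition of $\overline{c}_{\lambda,\delta}^\nu$ preceding Theorem~\ref{gentail}, this equals $\sum_{\delta\in{\mathcal P}_{2n}^{(1,1)}}\overline{c}_{\lambda,\delta}^\nu$. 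Chaining the three displayed equalities then yields $c^\lambda_\nu(\mathfrak{sp}_{2n})=\sum_{\delta\in{\mathcal P}_{2n}^{(1,1)}}\overline{c}_{\lambda,\delta}^\nu$.

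Since all of the combinatorial substance is already carried by Theorem~\ref{gentail} (and the technical Lemma~\ref{lem2} behind it), I do not expect any genuine obstacle here; the corollary is essentially a bookkeeping step. The only point that needs care is the additivity of the count, that is, confirming that the same tableau cannot arise for two distinct $\delta$, which is immediate once one notes that $\delta$ is determined by the underlying element ${\mathbf T}$ through the shape of its body.
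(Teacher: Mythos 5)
Your proposal is correct and follows exactly the paper's own argument: the paper derives this corollary precisely by combining Theorem~\ref{branching} (which identifies $c^\lambda_\nu(\mathfrak{sp}_{2n})$ with the cardinality of ${\rm LR}^\lambda_\nu(\mathfrak{sp}_{2n})$) with Theorem~\ref{gentail}~(2) (which identifies the image of the tail injection with $\bigsqcup_{\delta\in{\mathcal P}_{2n}^{(1,1)}}\overline{\rm LR}_{\lambda',\delta'}^{\nu'}$), then counting. Your extra care about the disjointness of the union, with $\delta$ recovered from the shape of ${\mathbf T}^{\rm body}$, is a valid observation that the paper leaves implicit.
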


\subsection{Generalized exponents in terms of distinguished tableaux}\label{ranknlusz}

The goal is to derive a finite rank analogue of the results in {Section~\ref{subsec_Cinfinite}, that is, for type $C_n$. 

We use the same notation, except that everything now happens in finite rank. Thus, we require $\lambda\in{\mathcal P}_n$. We denote the underlying type $A_{2n-1}$ crystal by $B_{2n}(\lambda)$, and the set of distinguished tableaux contained in it by $D_{2n}(\lambda)$. The latter is defined like in {Definition~\ref{def6.3}, and is characterized by the analogues of the two conditions in {Proposition~\ref{Prop_Char_Distin}}. Recall the set $S_b$, whose analogue is defined for any $b\in B_{2n}(\lambda)$ by
\[S_{b,n}:=\{\mu\in{\mathcal P}_{2n}\,:\,\boldsymbol{\varphi}(b)+\mu\in{\mathcal P}_{2n}^{(2)}\,,\;\,\boldsymbol{\varepsilon}(b)+\mu\in{\mathcal P}_{2n}^{(1,1)}\}\,.\]
Note that the above conditions on $\mu$ simply mean that 
\[b\in\rm{LR}_{\lambda,\delta}^\nu\;\;\;\mbox{for $\delta:=\boldsymbol{\varepsilon}(b)+\mu\in{\mathcal P}_{2n}^{(1,1)}$,  $\;\nu:=\boldsymbol{\varphi}(b)+\mu\in{\mathcal P}_{2n}^{(2)}$}\,.\]

The analogue of the weight $\mu_b$, denoted $\mu_{b,n}$, is constructed as in {Proposition~\ref{Prop_Char_Distin}~(2)}:
\begin{equation}\label{mub}\mu_{b,n}:=\sum_{i=1}^{n-1}(\varphi_{2i}(b)\;\,\mbox{mod $2$})\,\omega_{2i}\,.\end{equation}
With this notation, we have the analogue of {Lemma~\ref{lem6.4}}, namely
\begin{equation}\label{sb}
S_{b,n}:=\casetwoother{\mu_{b,n}+{\mathcal P}_{2n}^\boxplus}{b\in D_{2n}(\lambda)}{\emptyset}
\end{equation}

We also need some new notation. Let $D_{2n}^*(\lambda)$ be defined by ``swapping'' the conditions characterizing $D_{2n}(\lambda)$ in {Proposition~\ref{Prop_Char_Distin}}; namely, $D_{2n}^*(\lambda)$ consists of $b\in B_{2n}(\lambda)$ such that
\begin{enumerate}
\item[(C1)] $\varphi_{i}(b)=0$ for any odd $i$;
\item[(C2)] $\varepsilon_{i}(b)$ is even for any odd $i$.
\end{enumerate}
Let $\overline{D}_{2n}^*(\lambda)$ be the subset of $D_{2n}^*(\lambda)$ consisting of those SSYT satisfying the following flag condition:

$\!$(C3) the entries in row $i$ are at least $2i-1$, for $i=1,\ldots,n$. 

\noindent Finally, we define the analogues of $\boldsymbol{\varepsilon}(b)$, $\boldsymbol{\varphi}(b)$, and of $\mu_{b,n}$ in \eqref{mub} by
\[\boldsymbol{\varepsilon}^*(b):=\sum_{i=1}^{2n-1}{\varepsilon}_{2n-i}(b)\,\omega_i\,,\;\;\;\;\boldsymbol{\varphi}^*(b):=\sum_{i=1}^{2n-1}{\varphi}_{2n-i}(b)\,\omega_i\,,\;\;\;\;\mu_{b,n}^*:=\sum_{i=1}^{n-1}(\varepsilon_{2n-2i}(b)\;\,\mbox{mod $2$})\,\omega_{2i}\,.\]

Now recall Lusztig's involution $S$ on the crystal $B_{2n}(\lambda)$. This is realized by Sch\"utzenberger's evacuation \cite{fulyt}, and is known to commute with the crystal operators as follows: 
\begin{equation}\label{sef}\widetilde{e}_i S=S\widetilde{f}_{2n-i}\,,\;\;\;\;\;\widetilde{f}_i S=S\widetilde{e}_{2n-i}\,.\end{equation}
It is then clear that $S$ maps $D_{2n}(\lambda)$ to $D_{2n}^*(\lambda)$. It also follows that we have
\begin{equation}\label{sepsphi}
\varepsilon_i(S(b))=\varphi_{2n-i}(b)\,,\;\;\;\;\;\varphi_i(S(b))=\varepsilon_{2n-i}(b)\,,
\end{equation}
and therefore
\begin{equation}\label{epsmubstar} \boldsymbol{\varepsilon}(b)=\boldsymbol{\varphi}^*(S(b))\,,\;\;\;\;\;\boldsymbol{\varphi}(b)=\boldsymbol{\varepsilon}^*(S(b))\,,\;\;\;\;\;\mu_{b,n}=\mu_{S(b),n}^*\,.
\end{equation}

We start with the analogue of {Proposition~\ref{prop6.6}}.

\begin{theorem}\label{thmsums} We have
\[
\sum_{\nu\in\mathcal{P}_{2n}^{(2)}}\sum_{\delta\in\mathcal{P}_{2n}^{(1,1)}}t^{\left\vert
\nu\right\vert /2}\,\overline{c}_{\lambda,\delta}^{\nu}=\sum_{b\in \overline{D}_{2n}^*(\lambda)}t^{\left\vert
\boldsymbol{\varepsilon}^*(b)+\mu_{b,n}^*\right\vert /2}\sum_{\kappa\in\mathcal{P}_{2n}^{\boxplus}}t^{\left\vert \kappa\right\vert /2}\,.
\]
\end{theorem}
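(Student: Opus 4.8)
The plan is to mimic the proof of Proposition~\ref{prop6.6}, transporting the statement through the Lusztig involution $S$ and feeding in two finite-rank ingredients: the conjugation symmetry map of Theorem~\ref{lrconjsym} (to realize $\overline{c}_{\lambda,\delta}^{\nu}$ as a count inside the crystal $B_{2n}(\lambda)$) and the analogue \eqref{sb} of Lemma~\ref{lem6.4}. I would first move the right-hand side to the ``un-starred'' side. Since $S$ is an involution swapping the defining conditions, it restricts to a bijection $D_{2n}(\lambda)\to D_{2n}^*(\lambda)$; set $\overline{D}_{2n}(\lambda):=S^{-1}(\overline{D}_{2n}^*(\lambda))$. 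For $a\in D_{2n}(\lambda)$, the relations \eqref{epsmubstar} give $\boldsymbol{\varepsilon}^*(S(a))=\boldsymbol{\varphi}(a)$ and $\mu_{S(a),n}^*=\mu_{a,n}$, so the substitution $b=S(a)$ rewrites the right-hand side as
\[
\sum_{a\in\overline{D}_{2n}(\lambda)}t^{\left\vert\boldsymbol{\varphi}(a)+\mu_{a,n}\right\vert/2}\sum_{\kappa\in\mathcal{P}_{2n}^{\boxplus}}t^{\left\vert\kappa\right\vert/2}\,.
\]
This has exactly the shape of the right-hand side of Proposition~\ref{prop6.6}, now in finite rank and restricted to the flagged subset $\overline{D}_{2n}(\lambda)$, so it suffices to show the left-hand side equals it.

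Next I would turn $\overline{c}_{\lambda,\delta}^{\nu}=|\overline{\rm LR}_{\lambda',\delta'}^{\nu'}|$ into a flagged crystal count. The bijection $T\mapsto T'$ of Theorem~\ref{lrconjsym} identifies ${\rm LR}_{\lambda',\delta'}^{\nu'}$ with ${\rm LR}_{\lambda,\delta}^{\nu}=\{a\in B_{2n}(\lambda)\mid\boldsymbol{\varepsilon}(a)\le\delta,\ \boldsymbol{\varphi}(a)=\boldsymbol{\varepsilon}(a)+\nu-\delta\}$, exactly the description used in the proof of Proposition~\ref{prop6.6}. Hence $\overline{c}_{\lambda,\delta}^{\nu}$ counts those $a\in{\rm LR}_{\lambda,\delta}^{\nu}$ whose image $a'$ satisfies the flag condition \eqref{condbranching}. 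For a fixed $a$, as $(\nu,\delta)$ ranges over $\mathcal{P}_{2n}^{(2)}\times\mathcal{P}_{2n}^{(1,1)}$, the membership $a\in{\rm LR}_{\lambda,\delta}^{\nu}$ means precisely $\delta=\boldsymbol{\varepsilon}(a)+\mu$ and $\nu=\boldsymbol{\varphi}(a)+\mu$ for some $\mu\in S_{a,n}$, so in particular $a\in D_{2n}(\lambda)$; this is the same gathering of pairs as in Proposition~\ref{prop6.6}.

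The step I expect to be the main obstacle is matching the two flag conditions: I must show that, for $a\in D_{2n}(\lambda)$, the image $a'$ obeys \eqref{condbranching} if and only if $S(a)$ obeys the row condition (C3), i.e.\ $a\in\overline{D}_{2n}(\lambda)$; crucially, \eqref{condbranching} depends a priori on $\delta$ while (C3) does not, so the real content is that the dependence drops out. The key observation is that both conditions live on a single column. Since $B_{2n}(\lambda)$ consists of semistandard tableaux with weakly increasing rows, (C3) on the shape-$\lambda$ tableau $S(a)$ is equivalent to requiring that the $i$-th entry $c_i$ of its first column satisfy $c_i\ge 2i-1$. On the other hand, the conjugation symmetry map begins with evacuation $S$ (Step~1) and transposition (Step~2), so the first row of $a'$ is the relabeling (Step~3) of the first column of $S(a)$; since each such cell is the topmost cell of its vertical strip it receives the value $r_i=\delta_{c_i}^{\rm rev}+1$, and because $\delta\in\mathcal{P}_{2n}^{(1,1)}$ gives $\delta_{2i-1}^{\rm rev}=\delta_{2i}^{\rm rev}$, the inequality \eqref{condbranching} reads $\delta_{c_i}^{\rm rev}\ge\delta_{2i-1}^{\rm rev}$. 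Monotonicity of $\delta^{\rm rev}$ immediately gives the implication $c_i\ge 2i-1\Rightarrow$ flag, uniformly in $\delta$; the delicate direction is to rule out that $c_i<2i-1$ while $\delta^{\rm rev}$ is constant across the relevant range (which would let an excluded $a$ sneak in for some admissible $\delta$). Carrying this out rigorously — controlling the interaction of evacuation, transposition and relabeling, and invoking the distinguished conditions (C1)--(C2) on $S(a)$ together with the range of admissible $\mu\in S_{a,n}$ to force strict failure in that degenerate case — is the genuinely new combinatorial input beyond the stable case.

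Granting this identification, the left-hand side becomes $\sum_{a\in\overline{D}_{2n}(\lambda)}\sum_{\mu\in S_{a,n}}t^{\left\vert\boldsymbol{\varphi}(a)+\mu\right\vert/2}$. Writing $\mu=\mu_{a,n}+\kappa$ with $\kappa\in\mathcal{P}_{2n}^{\boxplus}$ via \eqref{sb}, the exponent splits as $\left\vert\boldsymbol{\varphi}(a)+\mu_{a,n}\right\vert/2+\left\vert\kappa\right\vert/2$; factoring out $\sum_{\kappa}t^{\left\vert\kappa\right\vert/2}$ produces exactly the transported right-hand side, and undoing the substitution $a\mapsto S(a)$ recovers the claim.
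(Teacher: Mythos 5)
Your scaffolding is exactly the paper's proof of Theorem~\ref{thmsums}: transport the right-hand side through the Lusztig involution via \eqref{epsmubstar}, realize $\overline{c}_{\lambda,\delta}^{\nu}$ inside $B_{2n}(\lambda)$ via the conjugation symmetry bijection of Theorem~\ref{lrconjsym}, gather the pairs $(\nu,\delta)$ for fixed $a$ into the sets $S_{a,n}$ as in Proposition~\ref{prop6.6}, and conclude with \eqref{sb}. However, the step you yourself single out as ``the main obstacle'' --- that for $a\in{\rm LR}_{\lambda,\delta}^{\nu}$ with $\delta\in\mathcal{P}_{2n}^{(1,1)}$, the flag condition \eqref{condbranching} on $\sigma_\delta(a)$ holds if and only if $S(a)$ satisfies (C3), independently of $\delta$ --- is precisely the paper's Lemma~\ref{lemdelta}, and you do not prove it: you prove only the easy implication (monotonicity of $\delta^{\rm rev}$) and then describe the hard one as ``the genuinely new combinatorial input,'' gesturing at ingredients (conditions (C1)--(C2), the range of $\mu\in S_{a,n}$) that are not what actually closes it. Without this lemma the identity genuinely can fail to be established: a tableau $a$ whose first column of $S(a)$ has some entry $c_i<2i-1$ could a priori still satisfy \eqref{condbranching} for a degenerate $\delta$ with $\delta_{c_i}^{\rm rev}=\delta_{2i-1}^{\rm rev}$, making $\overline{S}_{a,n}$ nonempty for some $a\notin\overline{D}_{2n}(\lambda)$ and destroying the factorization. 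So the proposal has a real gap at its central point.

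For comparison, here is how the paper closes it. Write the first column of $S(b)$ as $(c_1<\ldots<c_p)$, assume \eqref{condbranching} holds but some $c_i<2i-1$, and pick the \emph{largest} such (bad) index $i\ge 2$. If $c_i=2i-2$, then $\delta_{2i-2}^{\rm rev}=\delta_{2i-1}^{\rm rev}$; combined with $\boldsymbol{\varepsilon}(b)\le\delta$ (which comes from LR membership, i.e.\ from $\delta=\boldsymbol{\varepsilon}(b)+\mu$ with $\mu$ a partition, not from distinguishedness) and \eqref{sepsphi}, this forces $\varphi_{2i-2}(S(b))=0$. The bracketing rule for $\widetilde{f}_{2i-2}$ then rules out $i=p$ and $c_{i+1}\ge 2i$, forcing $c_{i+1}=2i-1$, which contradicts $c_{i+1}\ge 2(i+1)-1$ (maximality of $i$). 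Hence $c_i\le 2i-3$, which forces $c_{i-1}<c_i\le 2i-3$, so $i-1$ is also bad; iterating this descent down to index $2$ yields $c_2\le 1$ and the contradiction $1\le c_1<c_2\le 1$. Note that this argument uses only the inequality $\boldsymbol{\varepsilon}(b)\le\delta$, the relation \eqref{sepsphi}, and the crystal bracketing rule --- conditions (C1)--(C2) play no role --- so the lemma holds on all of ${\rm LR}_{\lambda,\delta}^{\nu}$, and it is this $\delta$-independence that makes your final gathering step legitimate.
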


The proof of this theorem is based on the following lemma. To state it, let us recall the Littlewood-Richardson conjugation symmetry map in Section~\ref{lrcs}. Following the notation used there, we set $m=2n$, and given fixed $\lambda$ we denote by $\sigma_\delta$ the bijection from ${\rm LR}_{\lambda,\delta}^{\nu}$ to ${\rm LR}_{\lambda',\delta'}^{\nu'}$; note that this map uses $\delta$ in a crucial way, in Step 3 of its construction. 

\begin{lemma}\label{lemdelta} Consider $b\in{\rm LR}_{\lambda,\delta}^\nu$ with $\delta\in{\mathcal P}_{2n}^{(1,1)}$. The SSYT $\sigma_\delta(b)$ satisfies condition {\rm \eqref{condbranching}} with respect to $\delta$ if and only if $S(b)$ satisfies condition {\rm (C3)}. So in fact, the first condition is  independent of $\delta$. 
\end{lemma}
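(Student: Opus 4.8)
The plan is to reduce both conditions to a comparison involving the first column $a_1<\ldots<a_p$ of $S(b)$, where $p=\ell(\lambda)\le n$, and then to settle the two implications separately. First I would record that $\sigma_\delta(b)$ has shape $\lambda'$, so its first row has length $p$; Step~2 in Section~\ref{lrcs} transposes $S(b)$, carrying its first column onto the first row of $S(b)^{\rm tr}$, and Step~3 relabels, within the vertical strip of each value $c$ scanned from northeast to southwest, the \emph{topmost} occurrence of $c$ by $\delta_c^{\rm rev}+1$. Since the entry of $S(b)^{\rm tr}$ in cell $(1,j)$ equals $a_j$ and is the unique (hence topmost) occurrence of $a_j$ in its row, the first row of $\sigma_\delta(b)$ is exactly $(\delta_{a_1}^{\rm rev}+1,\ldots,\delta_{a_p}^{\rm rev}+1)$. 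Consequently condition \eqref{condbranching} reads $\delta_{a_i}^{\rm rev}\ge\delta_{2i-1}^{\rm rev}$ for all $i$, whereas condition (C3) for $S(b)$ — whose leftmost, hence smallest, entry in row $i$ is $a_i$ — reads $a_i\ge 2i-1$ for all $i$.

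The forward implication is then immediate, since $\delta^{\rm rev}$ is weakly increasing: $a_i\ge 2i-1$ gives $\delta_{a_i}^{\rm rev}\ge\delta_{2i-1}^{\rm rev}$, with no reference to $\delta$ beyond monotonicity. The substance lies in the converse, for which I would exploit that $b\in{\rm LR}_{\lambda,\delta}^\nu$ means $H_\delta\otimes b$ is of highest weight, hence $\boldsymbol{\varepsilon}(b)\le\delta$, i.e. $\varepsilon_k(b)\le\delta_k-\delta_{k+1}$ for all $k$. Transporting this through the evacuation relations \eqref{sepsphi} yields $\varphi_l(S(b))=\varepsilon_{2n-l}(b)\le\delta_{2n-l}-\delta_{2n-l+1}=\delta_{l+1}^{\rm rev}-\delta_l^{\rm rev}$. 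Writing $A=\{a_1,\ldots,a_p\}$ for the set of first-column entries, the key structural fact I would establish is
\[(\star)\qquad l\in A,\ l+1\notin A\ \Longrightarrow\ \delta_{l+1}^{\rm rev}>\delta_l^{\rm rev}\,.\]
Indeed, if a value $l$ occurs in the first column of the SSYT $S(b)$ while $l+1$ does not, then some $l$ is left unbracketed by the $l+1$'s, so $\varphi_l(S(b))\ge 1$, and the displayed bound forces the strict inequality.

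Granting $(\star)$, the converse becomes a short propagation argument. Let $J=\{l:\delta_{l+1}^{\rm rev}>\delta_l^{\rm rev}\}$; since $\delta\in\mathcal{P}^{(1,1)}$ makes $\delta^{\rm rev}$ constant on each block $\{2i-1,2i\}$, every element of $J$ is even. Then \eqref{condbranching} fails at $i$ precisely when $a_i\le 2i-2$ and $J\cap[a_i,2i-2]\ne\emptyset$. Assume (C3) fails, say $a_i\le 2i-2$ for some $i$. If $J\cap[a_i,2i-2]\ne\emptyset$ we are done; otherwise $a_i\notin J$, so $(\star)$ forces $a_i+1\in A$, whence $a_{i+1}=a_i+1\le 2(i+1)-2$, so (C3) still fails at $i+1$. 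Iterating, and noting that the only new index entering the interval at each step is the even number $2i$ (the intervening odd index never lies in $J$), we either meet $J$ — so \eqref{condbranching} fails — or we reach $i=p$; there $a_p$ is maximal in $A$, so $a_p+1\notin A$, and $(\star)$ gives $a_p\in J\cap[a_p,2p-2]$, again a failure of \eqref{condbranching}. This proves the contrapositive of the converse, and the asserted independence of $\delta$ is then manifest.

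The main obstacle is the justification of $(\star)$ — specifically the elementary but convention-sensitive crystal fact that a first-column entry $l$ of an SSYT whose successor $l+1$ is absent from the first column always produces an unbracketed $l$, and hence $\varphi_l\ge 1$. I would prove this directly from the signature (bracketing) rule for $\widetilde f_l$ applied to the reading word, tracking the relative positions of the $l$'s and $(l+1)$'s and using the first-column hypothesis to exhibit an $l$ that cannot be matched by any $l+1$ to its right. Everything else in the argument reduces either to the monotonicity of $\delta^{\rm rev}$ or to the bookkeeping displayed above.
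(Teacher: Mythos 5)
Your proof is correct, and it rests on the same two pillars as the paper's own argument: the reduction of both conditions to the first column $(a_1<\ldots<a_p)$ of $S(b)$ — via the observation that the first row of $\sigma_\delta(b)$ is $(\delta_{a_1}^{\rm rev}+1,\ldots,\delta_{a_p}^{\rm rev}+1)$ — and, for the hard direction, the combination of $\boldsymbol{\varepsilon}(b)\le\delta$ transported through \eqref{sepsphi} with the bracketing rule. In fact your key fact $(\star)$ is precisely the contrapositive of the step the paper uses: in the paper's notation, if $\delta^{\rm rev}$ has no jump at $c_i$, then $\varphi_{c_i}(S(b))=0$, which by bracketing forces $c_{i+1}=c_i+1$. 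What genuinely differs is the organization of the induction. The paper argues by contradiction: assuming \eqref{condbranching} holds everywhere while (C3) fails somewhere, it takes the \emph{largest} bad index and propagates \emph{downward}, ending in the contradiction $1\le c_1<c_2\le 1$; the intermediate steps (``repeating the above argument'') require tracking a growing chain of forced equalities of $\delta^{\rm rev}$. You instead prove the contrapositive directly and propagate \emph{upward} from any failure of (C3): at each index you either meet a jump of $\delta^{\rm rev}$, so \eqref{condbranching} fails there, or you force $a_{i+1}=a_i+1$; at $i=p$ the maximality of $a_p$ together with $(\star)$ forces a jump. This buys a cleaner termination and makes the asserted independence of $\delta$ transparent, since all the bookkeeping is phrased through the jump set $J$. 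As for the step you flag as the main obstacle: the crystal part of $(\star)$ is exactly what the paper invokes as ``the usual bracketing rule,'' and it has a one-line proof — in the column reading word, an $l$ lying in the first column with no $l+1$ in that column is followed only by first-column entries $\ge l+2$, so its $+$ is never cancelled and $\varphi_l(S(b))\ge 1$ — so there is no gap there.
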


\begin{proof} Let us denote the first column of $S(b)$ by $(c_1<\ldots<c_p)$, where $p\le n$. By the construction of the map $\sigma_\delta$ in Section~\ref{lrcs}, condition {\rm \eqref{condbranching}} for $\sigma_\delta(b)$ simply means
\[\delta_{c_1}^{\rm rev}\ge\delta_1^{\rm rev}=\delta_2^{\rm rev}\,,\;\ldots\,,\;\delta_{c_p}^{\rm rev}\ge\delta_{2p-1}^{\rm rev}=\delta_{2p}^{\rm rev}\,.\]
We need to show that this is equivalent to
\[c_1\ge 1\,,\;\ldots\,,\;c_p\ge 2p-1\,.\]
The implication $(\Leftarrow)$ is clear since $\delta^{\rm rev}=(\delta_1^{\rm rev}=\delta_2^{\rm rev}\le\delta_3^{\rm rev}=\delta_4^{\rm rev}\le\ldots)$, while $(\Rightarrow)$ is only clear if the weak inequalities defining $\delta$ are strict. 

Assuming that $(\Rightarrow)$ fails, pick the largest $i$ such that $\delta_{c_i}^{\rm rev}=\delta_{2i-1}^{\rm rev}$ and $c_i<2i-1$, where clearly $i\ge 2$; we call such an index $i$ bad. Let us assume first that $c_i=2i-2$, so $\delta_{2i-2}^{\rm rev}=\delta_{2i-1}^{\rm rev}$. Since $b\in{\rm LR}_{\lambda,\delta}^\nu$, we have $\boldsymbol{\varepsilon}(b)\le\delta$, so by \eqref{epsmubstar} we deduce $\varphi_{2i-2}(S(b))=0$. This rules out $i=p$, as well as $i<p$ and $c_{i+1}\ge 2i$, because in these cases $\widetilde{f}_{2i-2}(S(b))\ne 0$, by the usual bracketing rule for crystal operators, see e.g. \cite{HK}. It follows that $c_{i+1}=2i-1$, but this contradicts $c_{i+1}\ge 2(i+1)-1$, which holds by the maximality of $i$. Thus, we must have $c_i\le 2i-3$.

Assuming $i>2$, the index $i-1$ must also be bad, because otherwise we would have 
\[2(i-1)-1\le c_{i-1}<c_i\le 2i-3\,.\]
By repeating the above argument with $i$ replaced by $i-1$, we deduce $c_{i-1}\le 2i-5$. We repeat the previous reasoning for the indices $i-2,i-3,\ldots,2$, and conclude $c_2\le 1$. This leads to the contradiction $1\le c_1<c_2\le 1$, which concludes the proof. 
\end{proof}

\begin{proof}[Proof of Theorem \rm{\ref{thmsums}}] We define the following subset of $S_{b,n}$:
\[\overline{S}_{b,n}:=\{\mu\in S_{b,n}\,:\,\mbox{$\sigma_\delta(b)$ satisfies \eqref{condbranching} with respect to $\delta$}\}\,,\;\;\;\;\mbox{where $\delta:=\boldsymbol{\varepsilon}(b)+\mu$}\,.\] 
Letting 
\[\overline{D}_{2n}(\lambda):=\{b\in D_{2n}(\lambda)\,:\,\mbox{$S(b)$ satisfies condition {\rm (C3)}}\}\,,\]
we observe that its image under $S$ is precisely $\overline{D}_{2n}^*(\lambda)$. By \eqref{sb} and Lemma~\ref{lemdelta}, we have
\begin{equation}\label{sb1}
\overline{S}_{b,n}:=\casetwoother{\mu_{b,n}+{\mathcal P}_{2n}^\boxplus}{b\in\overline{D}_{2n}(\lambda)}{\emptyset}
\end{equation}

We now follow the approach in the proof of {Proposition~\ref{prop6.6}}. This gives
\begin{align*}
\sum_{\nu\in\mathcal{P}_{2n}^{(2)}}\sum_{\delta\in\mathcal{P}_{2n}^{(1,1)}}t^{\left\vert
\nu\right\vert /2}\,\overline{c}_{\lambda,\delta}^{\nu}&=\sum_{b\in D_{2n}(\lambda)}\sum_{\mu\in
\overline{S}_{b,n}}t^{\left\vert \boldsymbol{\varphi}(b)+\mu\right\vert /2}\\
&=\sum_{b\in \overline{D}_{2n}(\lambda)}t^{\left\vert
\boldsymbol{\varphi}(b)+\mu_{b,n}\right\vert /2}\sum_{\kappa\in\mathcal{P}_{2n}^{\boxplus}}t^{\left\vert \kappa\right\vert /2}\\
&=\sum_{b\in \overline{D}_{2n}^*(\lambda)}t^{\left\vert
\boldsymbol{\varepsilon}^*(b)+\mu_{b,n}^*\right\vert /2}\sum_{\kappa\in\mathcal{P}_{2n}^{\boxplus}}t^{\left\vert \kappa\right\vert /2}\,.
\end{align*}
Here the second equality follows from \eqref{sb1}, while the third one follows by translating all the parameters from $\overline{D}_{2n}(\lambda)$ to $\overline{D}_{2n}^*(\lambda)$ via \eqref{epsmubstar}.
\end{proof}
 
We now derive the analogue of {Theorem~\ref{thm6.7}}, and also of
{Theorem~\ref{charge-a}} in type $A$. Observe first we can write more
explicitely for any vertex $b\in B_{2n}(\lambda)$
\[
\left\vert \boldsymbol{\varepsilon}^{\ast}(b)+\mu_{b,n}^{\ast}\right\vert
/2=\sum_{i=1}^{2n-1}(2n-i)\left\lceil \frac{\varepsilon_{i}(b)}{2}\right\rceil
.
\]

\begin{theorem}
\label{kcn} We have
\[
K_{\lambda,0}^{C_{n}}(t)=\sum_{b\in\overline{D}_{2n}^{\ast}(\lambda
)}t^{\mathrm{ch}_{C_{n}}(b)}\,,
\]
where
\[
\mathrm{ch}_{C_{n}}(b)=\sum_{i=1}^{2n-1}(2n-i)\left\lceil \frac{\varepsilon
_{i}(b)}{2}\right\rceil \,.
\]

\end{theorem}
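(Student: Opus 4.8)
The plan is to combine Corollary~\ref{kwonbrrule} with Theorem~\ref{thmsums} in order to convert the branching-coefficient expression for $K_{\lambda,0}^{C_n}(t)/\prod_{i=1}^n(1-t^{2i})$ into a sum over distinguished tableaux, and then to clear the denominator. First I would recall from Proposition~\ref{br-rules}(3) together with Corollary~\ref{kwonbrrule} the identity
\[
\frac{K_{\lambda,0}^{C_n}(t)}{\prod_{i=1}^n(1-t^{2i})}=\sum_{\nu\in\mathcal{P}_{2n}^{(2)}}t^{|\nu|/2}\,c_\nu^\lambda(\mathfrak{sp}_{2n})=\sum_{\nu\in\mathcal{P}_{2n}^{(2)}}\sum_{\delta\in\mathcal{P}_{2n}^{(1,1)}}t^{|\nu|/2}\,\overline{c}_{\lambda,\delta}^{\nu}\,.
\]
By Theorem~\ref{thmsums}, the right-hand side equals
\[
\sum_{b\in \overline{D}_{2n}^*(\lambda)}t^{|\boldsymbol{\varepsilon}^*(b)+\mu_{b,n}^*|/2}\sum_{\kappa\in\mathcal{P}_{2n}^{\boxplus}}t^{|\kappa|/2}\,.
\]

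Next I would identify the geometric-type factor. Exactly as in the proof of Theorem~\ref{thm6.7}, the set $\mathcal{P}_{2n}^{\boxplus}$ is obtained by dilating $\mathcal{P}_n$ by a factor of $2$ (each cell becomes a $\boxplus$), so
\[
\sum_{\kappa\in\mathcal{P}_{2n}^{\boxplus}}t^{|\kappa|/2}=\frac{1}{\prod_{i=1}^n(1-t^{2i})}\,.
\]
Cancelling this common factor against the denominator on the left immediately yields
\[
K_{\lambda,0}^{C_n}(t)=\sum_{b\in\overline{D}_{2n}^*(\lambda)}t^{|\boldsymbol{\varepsilon}^*(b)+\mu_{b,n}^*|/2}\,.
\]

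It then remains to verify that the exponent $|\boldsymbol{\varepsilon}^*(b)+\mu_{b,n}^*|/2$ coincides with $\mathrm{ch}_{C_n}(b)=\sum_{i=1}^{2n-1}(2n-i)\lceil \varepsilon_i(b)/2\rceil$. This is precisely the explicit computation flagged just before the theorem statement, and I would carry it out directly from the definitions of $\boldsymbol{\varepsilon}^*(b)=\sum_{i=1}^{2n-1}\varepsilon_{2n-i}(b)\,\omega_i$ and $\mu_{b,n}^*=\sum_{i=1}^{n-1}(\varepsilon_{2n-2i}(b)\ \mathrm{mod}\ 2)\,\omega_{2i}$. Writing $|\sum_j c_j\omega_j|=\sum_j j\,c_j$ and re-indexing $j\mapsto 2n-i$, one checks that for each odd index the contribution is $(2n-i)\varepsilon_i(b)/2$ (note $\varepsilon_i(b)$ is even for odd $i$ on $\overline{D}_{2n}^*(\lambda)$ by condition (C2)), while for each even index $i$ the $\mu_{b,n}^*$ term rounds $\varepsilon_i(b)/2$ up to $\lceil \varepsilon_i(b)/2\rceil$; combining the two cases gives the ceiling formula uniformly. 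The only genuinely delicate point is this last bookkeeping with the mod-$2$ corrections and the parity dictated by (C1)--(C2), but it is routine once the indexing is set up carefully; the substantive content of the theorem has already been absorbed into Theorem~\ref{thmsums} and Corollary~\ref{kwonbrrule}.
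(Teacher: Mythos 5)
Your proposal is correct and follows essentially the same route as the paper: the paper's own proof combines Proposition~\ref{br-rules}(3), Corollary~\ref{kwonbrrule}, and Theorem~\ref{thmsums}, then cancels the factor $\sum_{\kappa\in\mathcal{P}_{2n}^{\boxplus}}t^{|\kappa|/2}=1/\prod_{i=1}^{n}(1-t^{2i})$, with the exponent identity $|\boldsymbol{\varepsilon}^{\ast}(b)+\mu_{b,n}^{\ast}|/2=\mathrm{ch}_{C_n}(b)$ recorded just before the theorem statement. Your explicit verification of that identity (using condition (C2) for odd indices and the mod-$2$ correction from $\mu_{b,n}^{\ast}$ for even indices) is exactly the bookkeeping the paper leaves implicit, and it is carried out correctly.
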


\begin{proof}
The proof is immediate based on Corollary~\ref{kwonbrrule} and {Proposition~\ref{br-rules}~(3)}. Indeed, it suffices to observe that
\[
\sum_{\kappa\in\mathcal{P}_{2n}^{\boxplus}}t^{\left\vert \kappa\right\vert
/2}=\frac{1}{\prod_{i=1}^{n}(1-t^{2i})}\,.
\]
\end{proof}

\subsection{From distinguished tableaux to King tableaux} We follow a similar approach to that in {Section~\ref{dist-tab}}. The goal is to transfer the results to a natural labeling of the vertices of weight $0$ in the type $C_n$ crystal of highest weight $\lambda$, via a bijection with $\overline{D}_{2n}^*(\lambda)$. Such a natural labeling is given by the King tableaux of weight 0 \cite{King}. Recall that the King tableaux of type $C_n$ are just semistandard tableaux of shape $\lambda$ in the alphabet $\{1<\overline{1}<2<\overline{2}<\ldots<n<\overline{n}\}$, with the additional flag condition that the entries in each row $i$ are greater or equal to $i$. The set of such tableaux of weight 0 will be denoted by $K_{C_n}^0(\lambda)$. 

Consider a tableau $b$ in $\overline{D}_{2n}^*(\lambda)$, and let $N_i(b)$ denote the number of entries equal to $i$. Note first that conditions (C1) and (C2) in Section~\ref{ranknlusz} can be phrased as the following more explicit ones, for $i=1,\ldots,n$:
\begin{enumerate}
\item[(C1$'$)] the subword of the Japanese reading of the tableau $b$ formed by $2i-1$ and $2i$ has the property that in each right factor the number of $2i-1$ is less or equal to the number of $2i$; % $-$ a property which will be called {\em dual lattice word} property;
\item[(C2$'$)] $N_{2i}(b)-N_{2i-1}(b)$ is a (non-negative) even integer.
\end{enumerate}
Condition (C2) is also equivalent to the fact that the rows of $\boldsymbol{\theta}^*_n(b):=\boldsymbol{\varepsilon}^*(b)+\mu_{b,n}^*$ have even lengths. 

Given $b$ as above, we will map it to a King tableau in $K_{C_n}^0(\lambda)$. Letting $k_i:=N_{2i}(b)-N_{2i-1}(b)$, we apply the crystal operator $\widetilde{e}_{2i-1}^{k_i/2}$ to $b$, for $i=1,\ldots,n$. Note that these operators commute, and in fact they correspond to a $U_q(\mathfrak{sl}_2\oplus\ldots\oplus\mathfrak{sl}_2)$-crystal structure, cf. {Section~\ref{dist-tab}}. Afterwards, we replace the entries $2i-1$ and $2i$ with $i$ and $\overline{\imath}$, respectively, for each $i$. It is easy to see that the resulting filling has weight $0$, and that the flag condition (C3) turns into the similar condition for King tableaux. So the result is in $K_{C_n}^0(\lambda)$. 

Moreover, this map has an inverse. Indeed, given a King tableau $T$, we first replace the entries $i$ and $\overline{\imath}$ with $2i-1$ and $2i$, respectively. Then we map the resulting filling to the lowest weight element with respect to the corresponding $U_q(\mathfrak{sl}_2\oplus\ldots\oplus\mathfrak{sl}_2)$-crystal structure. It is easy to see that the resulting filling is in $\overline{D}_{2n}^*(\lambda)$. For obvious reasons, we denote this map by $T\mapsto L(T)$. 

Based on the above discussion, Theorem~\ref{kcn} can be rephrased as follows.

\begin{theorem}
\label{kfking} We have
\[
K_{\lambda,0}^{C_{n}}(t)=\sum_{T\in K_{C_{n}}^{0}(\lambda)}t^{\mathrm{ch}
_{C_{n}}(L(T))}\,,
\]
where
\[
\mathrm{ch}_{C_{n}}(L(T))=\sum_{i=1}^{2n-1}(2n-i)\left\lceil \frac
{\varepsilon_{i}(L(T))}{2}\right\rceil \,.
\]

\end{theorem}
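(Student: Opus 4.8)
The plan is to deduce Theorem~\ref{kfking} from Theorem~\ref{kcn} by a change of summation variable, using the map $L\colon K_{C_n}^0(\lambda)\to\overline{D}_{2n}^*(\lambda)$ constructed just above. Since the statistic $\mathrm{ch}_{C_n}$ depends only on the numbers $\varepsilon_i$, once $L$ is known to be a bijection the two sums coincide term by term under the substitution $b=L(T)$, with $\mathrm{ch}_{C_n}(b)$ becoming $\mathrm{ch}_{C_n}(L(T))$. Thus the whole content of the proof reduces to the bijectivity of $L$, which is precisely what the preceding discussion is designed to supply.

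First I would check that the forward map $b\mapsto T$ is well defined and lands in $K_{C_n}^0(\lambda)$. For $b\in\overline{D}_{2n}^*(\lambda)$, condition (C2$'$) ensures that each $k_i=N_{2i}(b)-N_{2i-1}(b)$ is a non-negative even integer, so $\widetilde{e}_{2i-1}^{k_i/2}$ is applicable; within the $U_q(\mathfrak{sl}_2)$-string on $\{2i-1,2i\}$ it produces a filling in which $2i-1$ and $2i$ occur equally often, whence weight $0$ after the relabeling $2i-1\mapsto i$, $2i\mapsto\overline{\imath}$. As the operators for distinct $i$ commute (they realize a $U_q(\mathfrak{sl}_2\oplus\cdots\oplus\mathfrak{sl}_2)$-structure), the result is independent of the order. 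Conversely, $L(T)$ undoes the relabeling and passes to the lowest weight vertex in each $\{2i-1,2i\}$-string; I would verify that this recovers conditions (C1)--(C3), and that the two constructions are mutually inverse string by string, since applying $\widetilde{e}_{2i-1}^{k_i/2}$ and returning to the lowest weight vertex of the same $\mathfrak{sl}_2$-string are inverse operations.

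With $L$ a bijection, the theorem follows by reindexing the sum of Theorem~\ref{kcn}:
\[
K_{\lambda,0}^{C_n}(t)=\sum_{b\in\overline{D}_{2n}^*(\lambda)}t^{\mathrm{ch}_{C_n}(b)}=\sum_{T\in K_{C_n}^0(\lambda)}t^{\mathrm{ch}_{C_n}(L(T))}\,.
\]
I expect the \emph{main obstacle} to lie in matching the two flag conditions. Concretely, one must argue that the operators $\widetilde{e}_{2i-1}$, which only replace some occurrences of $2i$ by $2i-1$ and hence never create an entry smaller than $2i-1$, carry condition (C3)---that the entries of row $i$ are at least $2i-1$---exactly to the King condition that, after the relabeling, the entries of row $i$ are at least $i$; and that this correspondence is reversed by passing to lowest weight vertices. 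Everything else---well-definedness, commutativity of the operators, and the automatic transfer of the statistic---is routine once this compatibility is in place.
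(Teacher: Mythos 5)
Your proposal is correct and follows essentially the same route as the paper: Theorem~\ref{kfking} is deduced from Theorem~\ref{kcn} by reindexing the sum via the bijection between $\overline{D}_{2n}^*(\lambda)$ and $K_{C_n}^0(\lambda)$, with $L$ given by relabeling and passing to lowest weight vertices in the $U_q(\mathfrak{sl}_2\oplus\cdots\oplus\mathfrak{sl}_2)$-structure, exactly as in the paper. Your explicit argument for the compatibility of the flag conditions (that $\widetilde{e}_{2i-1}$ only turns an entry $2i$ into $2i-1$, so an entry $\geq 2r-1$ in row $r$ stays $\geq 2r-1$) is in fact slightly more detailed than the paper, which dismisses this point as easy to see.
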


\begin{remarks}\label{remkn}{\rm 
(1) As noted in {Remark~\ref{rem11.5}}~(1), there does not seem to be a simple way to express the related statistic above directly in terms of $T$. However, the map $T\mapsto L(T)$ is a simple one. 

(2) Theorem~\ref{kfking} shows that it is more natural to define a statistic for computing the Kostka-Foulkes polynomial on King tableaux, rather than on the other important set of symplectic tableaux, namely the Kashiwara-Nakashima (KN) tableaux \cite{HK}. A natural question is whether the statistic above can be translated to the KN tableaux via the bijection in \cite{Sh}, and moreover if one recovers in this way the charge statistic constructed in \cite{lec} (which conjecturally computes the Kostka-Foulkes polynomials); we will be investigating this question in the future. 
}
\end{remarks}

We have the following analogue of {Theorem~\ref{thm6.8}}, cf. also Remark~\ref{rem11.5}, related to the expression of the multivariable generalization of $K_{\lambda,0}^{C_n}(t)$, denoted $K_{\lambda,0}^{C_n}(\boldsymbol{t})$. Like in the infinite case, the related combinatorial expression follows immediately from the (finite type) combinatorics worked out above. Note that the discrepancy mentioned in Assertion 2 of {Remark~\ref{rem11.5}} has now been corrected by passing from the set of distinguished tableaux $D_{2n}(\lambda)$ to its image $D_{2n}^*(\lambda)$ under the Sch\"utzenberger involution. 

\begin{theorem} Define the multivariable polynomial $K_{\lambda,0}^{C_n}(\boldsymbol{t})$ by%
\[
\frac{K_{\lambda,0}^{C_n}(\boldsymbol{t})}{\prod_{i=1}^{n}(1-t_{2i})}%
=\sum_{\nu\in\mathcal{P}^{(2)}_{2n}}\sum_{\delta\in\mathcal{P}^{(1,1)}_{2n}%
}\boldsymbol{t}^{\frac{1}{2}\nu}c_{\lambda,\delta}^{\nu}\,.
\]Then we have
\[
K_{\lambda,0}^{C_n}(\boldsymbol{t})=\sum_{T\in K_{C_n}^0(\lambda)}{\boldsymbol{t}}^{\boldsymbol{\theta}_n^*(L(T)) /2}\,,
\]
where 
\[{\boldsymbol{t}}^{\boldsymbol{\theta}_n^*(L(T)) /2}=\prod_{i=1}^{2n-1} t_{2n-i}^{\lceil{\varepsilon_{i}(L(T))}/{2}\rceil}\,.\]
\end{theorem}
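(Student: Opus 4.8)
The plan is to run the argument behind Theorems~\ref{thmsums} and~\ref{kcn} once more, but now tracking the full monomial $\boldsymbol{t}^{\beta/2}=\prod_{i}t_i^{\beta_i/2}$ attached to each weight $\beta=\sum_i\beta_i\omega_i$ in place of its total degree $t^{|\beta|/2}$. The single structural feature that makes this refinement automatic is that $\beta\mapsto\boldsymbol{t}^{\beta}$ is multiplicative, i.e.\ $\boldsymbol{t}^{\beta+\gamma}=\boldsymbol{t}^{\beta}\boldsymbol{t}^{\gamma}$. Every manipulation in the scalar proof is in fact an identity of monomials, so it lifts verbatim.

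First I would establish the multivariable refinement of Theorem~\ref{thmsums}:
\[\sum_{\nu\in\mathcal{P}_{2n}^{(2)}}\sum_{\delta\in\mathcal{P}_{2n}^{(1,1)}}\boldsymbol{t}^{\nu/2}\,\overline{c}_{\lambda,\delta}^{\nu}=\sum_{b\in\overline{D}_{2n}^*(\lambda)}\boldsymbol{t}^{\boldsymbol{\theta}_n^*(b)/2}\sum_{\kappa\in\mathcal{P}_{2n}^{\boxplus}}\boldsymbol{t}^{\kappa/2}\,.\]
Following the proof of Proposition~\ref{prop6.6}, I would group the triples $(\nu,\delta,b)$ by the underlying $b$: expanding $\overline{c}_{\lambda,\delta}^{\nu}$ as the number of $b\in{\rm LR}_{\lambda,\delta}^{\nu}$ for which $S(b)$ satisfies (C3), and using that this condition is independent of $(\nu,\delta)$ by Lemma~\ref{lemdelta}, the left-hand side becomes $\sum_{b\in D_{2n}(\lambda)}\sum_{\mu\in\overline{S}_{b,n}}\boldsymbol{t}^{(\boldsymbol{\varphi}(b)+\mu)/2}$. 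By~\eqref{sb1}, $\overline{S}_{b,n}=\mu_{b,n}+\mathcal{P}_{2n}^{\boxplus}$ when $b\in\overline{D}_{2n}(\lambda)$ and is empty otherwise; writing $\mu=\mu_{b,n}+\kappa$ and using multiplicativity, each inner monomial factors as $\boldsymbol{t}^{(\boldsymbol{\varphi}(b)+\mu_{b,n})/2}\,\boldsymbol{t}^{\kappa/2}$, which pulls the ($b$-independent) sum over $\kappa\in\mathcal{P}_{2n}^{\boxplus}$ outside. Finally, transporting the sum from $\overline{D}_{2n}(\lambda)$ to its image $\overline{D}_{2n}^*(\lambda)$ under the Sch\"utzenberger involution $S$ and invoking~\eqref{epsmubstar}, which give $\boldsymbol{\varphi}(b)+\mu_{b,n}=\boldsymbol{\varepsilon}^*(S(b))+\mu_{S(b),n}^*=\boldsymbol{\theta}_n^*(S(b))$, produces the displayed identity.

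Next I would compute the geometric factor. Since $\mathcal{P}_{2n}^{\boxplus}$ consists precisely of the weights $\sum_{i=1}^n 2a_i\omega_{2i}$ with $a_i\ge 0$, one has $\sum_{\kappa\in\mathcal{P}_{2n}^{\boxplus}}\boldsymbol{t}^{\kappa/2}=\prod_{i=1}^n(1-t_{2i})^{-1}$, which is exactly the denominator appearing in the definition of $K_{\lambda,0}^{C_n}(\boldsymbol{t})$. Dividing the refined identity by this product, and identifying its left-hand side with $K_{\lambda,0}^{C_n}(\boldsymbol{t})$ through the defining relation together with Corollary~\ref{kwonbrrule} (which gives $\sum_\delta\overline{c}_{\lambda,\delta}^\nu=c_\nu^\lambda(\mathfrak{sp}_{2n})$, cf.\ Proposition~\ref{br-rules}~(3)), I obtain $K_{\lambda,0}^{C_n}(\boldsymbol{t})=\sum_{b\in\overline{D}_{2n}^*(\lambda)}\boldsymbol{t}^{\boldsymbol{\theta}_n^*(b)/2}$. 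Re-expressing this as a sum over King tableaux via the bijection $L\colon K_{C_n}^0(\lambda)\to\overline{D}_{2n}^*(\lambda)$ of the preceding subsection (substituting $b=L(T)$) then yields the stated sum over $K_{C_n}^0(\lambda)$.

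It remains to verify the monomial formula, which is the only place demanding care, and where condition (C2) defining $\overline{D}_{2n}^*(\lambda)$ does the work. I would compute the coefficient of $\omega_j$ in $\boldsymbol{\theta}_n^*(b)=\boldsymbol{\varepsilon}^*(b)+\mu_{b,n}^*$: for odd $j$ it equals $\varepsilon_{2n-j}(b)$, which is even by (C2) and hence equals $2\lceil\varepsilon_{2n-j}(b)/2\rceil$; for $j=2i$ it equals $\varepsilon_{2n-2i}(b)+(\varepsilon_{2n-2i}(b)\bmod 2)=2\lceil\varepsilon_{2n-2i}(b)/2\rceil$. Thus $\boldsymbol{\theta}_n^*(b)/2=\sum_{j=1}^{2n-1}\lceil\varepsilon_{2n-j}(b)/2\rceil\,\omega_j$, and reindexing by $i=2n-j$ gives $\boldsymbol{t}^{\boldsymbol{\theta}_n^*(b)/2}=\prod_{i=1}^{2n-1}t_{2n-i}^{\lceil\varepsilon_i(b)/2\rceil}$, as required. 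Since all the substantial input—Theorem~\ref{thmsums}, the bijection $L$, and Lemma~\ref{lemdelta}—is already in place, I anticipate no genuine obstacle beyond this bookkeeping; as a consistency check, the specialization $t_j\mapsto t^j$ collapses $\boldsymbol{t}^{\boldsymbol{\theta}_n^*(b)/2}$ to $t^{\sum_i(2n-i)\lceil\varepsilon_i(b)/2\rceil}=t^{\mathrm{ch}_{C_n}(b)}$, recovering Theorem~\ref{kfking}.
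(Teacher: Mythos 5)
Your proof is, in substance, exactly what the paper intends: the paper gives no separate argument for this theorem, saying only that it ``follows immediately from the (finite type) combinatorics worked out above,'' and the content of that remark is precisely your lifting of Theorems~\ref{thmsums} and \ref{kcn} from total degrees $t^{|\beta|/2}$ to monomials $\boldsymbol{t}^{\beta/2}$, combined with the bijection $L$ between $K_{C_n}^0(\lambda)$ and $\overline{D}_{2n}^*(\lambda)$. Your three computations --- the monomial refinement of Theorem~\ref{thmsums} via Lemma~\ref{lemdelta}, \eqref{sb1} and \eqref{epsmubstar}, the factor $\sum_{\kappa\in\mathcal{P}_{2n}^{\boxplus}}\boldsymbol{t}^{\kappa/2}=\prod_{i=1}^n(1-t_{2i})^{-1}$, and the identity $\boldsymbol{\theta}_n^*(b)/2=\sum_{i=1}^{2n-1}\lceil\varepsilon_i(b)/2\rceil\,\omega_{2n-i}$ forced by condition (C2) --- are all correct.

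The one step that does not hold up as written is the final identification ``through the defining relation together with Corollary~\ref{kwonbrrule}.'' The defining relation in the statement has the plain Littlewood--Richardson coefficients $c_{\lambda,\delta}^{\nu}$, whereas your refined identity (correctly) involves $\overline{c}_{\lambda,\delta}^{\nu}$, and Corollary~\ref{kwonbrrule} does not convert one sum into the other: it identifies $\sum_{\delta}\overline{c}_{\lambda,\delta}^{\nu}$ with the branching coefficient $c_{\nu}^{\lambda}(\mathfrak{sp}_{2n})$, and says nothing about $\sum_{\delta}c_{\lambda,\delta}^{\nu}$, which is strictly larger in general. In fact, with plain $c$ the stated identity is false: for $n=2$ and $\lambda=(1,1)$, the pair $\nu=(2,2,2,2)$, $\delta=(2,2,1,1)$ has $c_{\lambda,\delta}^{\nu}=1$ but $\overline{c}_{\lambda,\delta}^{\nu}=0$ (condition \eqref{condbranching} fails, matching the fact that $V^{\mathfrak{gl}_4}_{(2,2,2,2)}=(\det)^2$ contains no copy of $V^{\mathfrak{sp}_4}_{(1,1)}$), so the plain-$c$ sum contains the monomial $t_4$; on the other hand, the claimed combinatorial formula gives $K_{\lambda,0}^{C_2}(\boldsymbol{t})=t_2$, so the right-hand side of the defining relation would be $t_2/\bigl((1-t_2)(1-t_4)\bigr)$, all of whose monomials are divisible by $t_2$. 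So the defining relation must be read with $\overline{c}_{\lambda,\delta}^{\nu}$ --- evidently a typo in the statement, and the reading forced by consistency with Theorem~\ref{thmsums} and with the single-variable case (Proposition~\ref{br-rules}(3) combined with Corollary~\ref{kwonbrrule}). You should flag this substitution explicitly rather than cite Corollary~\ref{kwonbrrule} as if it closed that gap; with that correction, your argument is complete and coincides with the paper's.
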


We will now continue Example~\ref{ex-11}.

\begin{example}\cellsize=3.1ex
{\rm 
Assume $\lambda=(1,1)$ in type $C_n$. Then we get
\[
K_{C_{n}}^0(\lambda
)=\left\{\tableau{k\\ \overline{k}}
%\begin{tabular}
%[c]{|c|}\hline
%$k$\\\hline
%$\overline{k}$\\\hline
%\end{tabular}
\mid k=2,\ldots,n\right\}  .
\]
This gives%
\[
L\left(\tableau{k\\ \overline{k}}
%\begin{tabular}
%[c]{|c|}\hline
%$k$\\\hline
%$\overline{k}$\\\hline
%\end{tabular}the weyl
\right)  =%
\begin{tabular}
[c]{|c|}\hline
$2k-1$\\\hline
$2k$\\\hline
\end{tabular}
\;\;\text{ and }\;\;\boldsymbol{\varepsilon}^*\left(
\begin{tabular}
[c]{|c|}\hline
$2k-1$\\\hline
$2k$\\\hline
\end{tabular}
\right)  =\omega_{2(n-k+1)}\text{ for any }k=2,\ldots,n.
\]
Therefore
\[
\boldsymbol{\theta}_n^*\left(
\begin{tabular}
[c]{|c|}\hline
$2k-1$\\\hline
$2k$\\\hline
\end{tabular}
\right)  =2\omega_{2(n-k+1)}\text{ for any }k=2,\ldots,n\text{.}%
\]
Finally%
\[
K_{\lambda,0}^{C_{n}}(\boldsymbol{t})=\sum_{k=2}^n t_{2(n-k+1)}=\sum_{k=1}^{n-1} t_{2k}\;\;\text{ and }\;\;K_{\lambda,0}^{C_{n}}(t)=\sum_{k=1}^{n-1}t^{2k}=\frac{t^{2}-t^{2n}}{1-t^{2}}\,.
\]
}
\end{example}

\section{Three applications}

In this section, we present three applications of Theorem~\ref{kfking}. 

\subsection{Growth of generalized exponents}

First we analyze the growth of the generalized exponents of type $C_{n}$ with respect to the rank $n$. 

The (weight $0$) symplectic King tableaux of type $C_{n}$ embed into those of type $C_{n+1}$ by changing the entries $k,\overline{k}$ to $k+1,\overline{k+1}$, for all $k$, respectively. Moreover, it is easy to see that this map preserves the statistic in Theorem~\ref{kfking}. So we obtain the following result, which to our knowledge is new. 

\begin{theorem}
For any integer $n$ and any partition $\lambda$ with at most $n$ parts, we have
$K_{\lambda,0}^{C_{n+1}}(t)-K_{\lambda,0}^{C_{n}}(t)\in\mathbb{Z}_{\geq0}[t]$.
\end{theorem}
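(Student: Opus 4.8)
The plan is to prove the sharper combinatorial fact that the embedding of weight-$0$ symplectic King tableaux sketched above is a statistic-preserving injection; the positivity then follows formally from Theorem~\ref{kfking}. Concretely, let $\iota\colon K_{C_n}^0(\lambda)\to K_{C_{n+1}}^0(\lambda)$ be the map replacing every entry $k$ by $k+1$ and every entry $\overline{k}$ by $\overline{k+1}$. Since $\lambda$ has at most $n\le n+1$ parts, since $\iota$ is order preserving on the alphabet $\{1<\overline{1}<2<\overline{2}<\cdots\}$, and since raising all indices by one turns a weight-$0$ filling into a weight-$0$ filling, $\iota(T)$ is a semistandard tableau of shape $\lambda$ and weight $0$; the flag condition is preserved (indeed strengthened), as the entries of row $i$ pass from $\ge i$ to $\ge i+1$. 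Thus $\iota$ is well defined and clearly injective. Granting the statistic identity $\mathrm{ch}_{C_{n+1}}(L(\iota(T)))=\mathrm{ch}_{C_n}(L(T))$ for all $T$, Theorem~\ref{kfking} gives
\[
K_{\lambda,0}^{C_{n+1}}(t)-K_{\lambda,0}^{C_n}(t)=\sum_{T'\in K_{C_{n+1}}^0(\lambda)\setminus\iota(K_{C_n}^0(\lambda))}t^{\mathrm{ch}_{C_{n+1}}(L(T'))}\in\mathbb{Z}_{\geq0}[t]\,,
\]
which is the assertion.

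It remains to establish the statistic identity, which is where the work lies. First I would track the map $L$ through $\iota$. Writing $b[2]$ for the filling obtained from a tableau $b$ by adding $2$ to every entry, the replacement step in the definition of $L$ sends an entry $k+1$ of $\iota(T)$ to $2(k+1)-1=(2k-1)+2$ and an entry $\overline{k+1}$ to $2(k+1)=(2k)+2$, so the filling produced from $\iota(T)$ before passing to the lowest weight element is exactly $P[2]$, where $P$ is the corresponding filling for $T$. The subsequent lowest-weight procedure uses only the odd crystal operators; under the relabeling $i\mapsto i+2$ the odd operators on $P$ correspond to $\widetilde{f}_3,\widetilde{f}_5,\ldots$ on $P[2]$, while $\widetilde{f}_1$ acts trivially on $P[2]$ (which has no entry $1$) and is never activated by the remaining odd operators. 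Hence the lowest weight element of $P[2]$ is $L(T)[2]$, that is, $L(\iota(T))=L(T)[2]$.

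The crux, and the main obstacle, is the behaviour of the numbers $\varepsilon_i$ under this shift at the boundary indices $i=1,2$, which have no counterpart on $L(T)$. For $i\ge 3$ the shift $b\mapsto b[2]$ is a crystal isomorphism intertwining $\widetilde{e}_{i-2}$ on $L(T)$ with $\widetilde{e}_i$ on $L(T)[2]$ (the signature rule for $\widetilde{e}_i$ ignores all letters other than $i,i+1$), so $\varepsilon_i(L(T)[2])=\varepsilon_{i-2}(L(T))$; moreover $\varepsilon_1(L(T)[2])=0$ since $L(T)[2]$ has no entry $2$. The delicate point is $\varepsilon_2(L(T)[2])$: because $L(T)[2]$ has no $2$'s, every $3$ is unmatched for $\widetilde{e}_2$, so $\varepsilon_2(L(T)[2])$ equals the number $N_1(L(T))$ of $1$'s in $L(T)$, and I must show this vanishes. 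Here I would invoke the defining conditions of $\overline{D}_{2n}^*(\lambda)$: by the flag condition {\rm (C3)} every $1$ and every $2$ of $L(T)$ lies in the first row, so the subfilling on the letters $\{1,2\}$ is a single weakly increasing row; but $L(T)$ is a lowest weight element for the $U_q(\mathfrak{sl}_2\oplus\cdots\oplus\mathfrak{sl}_2)$-structure, and the lowest weight element of a single $\mathfrak{sl}_2$-string is all $2$'s, forcing $N_1(L(T))=0$.

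With $\varepsilon_1(L(T)[2])=\varepsilon_2(L(T)[2])=0$ and $\varepsilon_i(L(T)[2])=\varepsilon_{i-2}(L(T))$ for $i\ge3$, the reindexing $j=i-2$ gives
\[
\mathrm{ch}_{C_{n+1}}(L(\iota(T)))=\sum_{i=3}^{2n+1}\bigl(2(n+1)-i\bigr)\left\lceil \frac{\varepsilon_{i-2}(L(T))}{2}\right\rceil=\sum_{j=1}^{2n-1}(2n-j)\left\lceil \frac{\varepsilon_{j}(L(T))}{2}\right\rceil=\mathrm{ch}_{C_n}(L(T))\,,
\]
completing the argument. The one genuinely non-formal ingredient is the vanishing of $N_1(L(T))$ in the previous paragraph, which pins down exactly why {\rm (C3)} together with the lowest-weight normalization conspire to keep the boundary operator $\widetilde{e}_2$ inert after the shift.
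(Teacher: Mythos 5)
Your proof is correct and takes essentially the same approach as the paper: the paper's own argument is precisely the index-shifting embedding $k,\overline{k}\mapsto k+1,\overline{k+1}$ of weight-$0$ King tableaux together with the assertion (stated there without proof as ``easy to see'') that this embedding preserves the statistic of Theorem~\ref{kfking}. Your verification of that assertion—reducing it to $L(\iota(T))=L(T)[2]$ and to the vanishing of $N_1(L(T))$, which you correctly deduce from the flag condition (C3) and the lowest-weight normalization in the definition of $L$—supplies exactly the details the paper omits.
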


\subsection{Reducing a type $C$ generalized exponent to one of type $A$} We now prove a conjecture of the first author \cite{lec}. This conjecture is the first step in the construction of the type $C_{n}$ charge statistic in \cite{lec}, and proves the conjecture that this charge computes the corresponding Kostka-Foulkes polynomials in the case of column shapes; see Remark~\ref{remkn}~(2). 

We now label the Dynkin diagram of type $C_n$ such that the special node is $n$. Consider the fundamental weight $\omega_{2p}$, where $p\in\{1,\ldots,\lfloor n/2\rfloor\}$. All the zero weight vertices in the crystal $B(\omega_{2p})$ belong to the same type $A_{n-1}$ component, which has highest weight $\gamma_p:=\varepsilon_1+\ldots+\varepsilon_p-\varepsilon_{n-p+1}-\ldots-\varepsilon_n$, where $\varepsilon_i$ are the coordinate vectors in ${\mathbb R}^n$. In type $A_{n-1}$, this weight corresponds to the partition $(1^{n-2p},2^p)$. 

\begin{theorem}\label{ac} We have
\[K^{C_n}_{\omega_{2p},0}(t)=K_{\gamma_p,0}^{A_{n-1}}(t^2)\,.\]
\end{theorem}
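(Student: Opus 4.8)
The plan is to evaluate both sides through explicit combinatorial models and match them via one natural bijection, the substitution $t\mapsto t^2$ arising from a uniform doubling of the statistic. For the left-hand side I would invoke Theorem~\ref{kfking}, giving $K^{C_n}_{\omega_{2p},0}(t)=\sum_{T\in K_{C_n}^0(\omega_{2p})} t^{\mathrm{ch}_{C_n}(L(T))}$. For the right-hand side I would use the Lascoux--Sch\"utzenberger charge formula recalled in Section~2: since $|\gamma_p|=2p+(n-2p)=n$, the zero-weight tableaux have content $(1^n)$, so $K^{A_{n-1}}_{\gamma_p,0}(t)=\sum_{T} t^{\mathrm{ch}_n(T)}$, the sum being over standard Young tableaux $T$ of shape $(2^p,1^{n-2p})$. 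It then suffices to produce a bijection between the two indexing sets under which $\mathrm{ch}_{C_n}(L(T))=2\,\mathrm{ch}_n(\cdot)$.

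First I would describe $K^0_{C_n}(\omega_{2p})$. A weight-$0$ King tableau of column shape $(1^{2p})$ is a strictly increasing column with entry set $\{k_1,\overline{k_1},\dots,k_p,\overline{k_p}\}$ for some $k_1<\dots<k_p$ in $\{1,\dots,n\}$, and the flag condition (entries of row $i$ are $\ge i$) is equivalent to $k_j\ge 2j$ for all $j$, the binding constraint coming from the barred entries in the even rows. Here $L$ is trivial: replacing $k_j,\overline{k_j}$ by $2k_j-1,2k_j$ yields a column whose entries already come in the consecutive pairs $\{2k_j-1,2k_j\}$, so each odd node $2m-1$ has both or neither of its two letters present, forcing $\varphi_{2m-1}=\varepsilon_{2m-1}=0$; thus the column is already lowest-weight for the $U_q(\mathfrak{sl}_2\oplus\cdots\oplus\mathfrak{sl}_2)$-structure and $L(T)$ equals it. For a single strictly increasing column with entry set $E$ one has $\varepsilon_{2m}=1$ exactly when $2m\notin E$ and $2m+1\in E$ (and $\varepsilon_{2m}=0$ otherwise, in particular when both $2m,2m+1\in E$). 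Since only even nodes contribute to $\mathrm{ch}_{C_n}$ and $\lceil \varepsilon_{2m}/2\rceil=\varepsilon_{2m}\in\{0,1\}$, with $E=\{2k_j-1,2k_j\}$ this gives
\[
\mathrm{ch}_{C_n}(L(T))=\sum_{m:\,m\notin K,\,m+1\in K}(2n-2m)=2\sum_{k\in K:\,k-1\notin K}(n-k+1)\,,\qquad K:=\{k_1,\dots,k_p\}\,.
\]

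Next I would treat the type-$A$ side. A standard Young tableau of shape $(2^p,1^{n-2p})$ is determined by its second column $B=\{b_1<\dots<b_p\}$, and a short count (comparing $b_j$ with the $j$-th smallest entry of the first column) shows the filling is semistandard iff $b_j\ge 2j$ for all $j$ --- exactly the constraint defining the sets $K$. Under the identification $B=K$, it remains to compute charge. Writing $\mathrm{ch}_n(T)=\sum_{l=2}^n \chi(l\text{ lies right of }l-1\text{ in the reading word})\,(n-l+1)$, I would analyze the reading word of the two-column tableau and show that $l$ lies to the right of $l-1$ precisely when $l\in B$ and $l-1\notin B$. The four cases (whether $l-1,l$ lie in the same column or in different columns) reduce to comparing the positions of first- and second-column entries; the one nontrivial case --- $l-1$ in the first column, $l$ in the second --- is exactly where the hypothesis $b_j\ge 2j$ is used. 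This yields $\mathrm{ch}_n(T)=\sum_{l\in B,\,l-1\notin B}(n-l+1)$, which is $\tfrac12\,\mathrm{ch}_{C_n}(L(T))$ under $B=K$.

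Combining the three steps, the bijection $T\leftrightarrow T_B$ matches the summands, so $K^{C_n}_{\omega_{2p},0}(t)=\sum_B t^{2\,\mathrm{ch}_n(T_B)}=K^{A_{n-1}}_{\gamma_p,0}(t^2)$. The only genuinely delicate point is the charge computation: the reading-word position analysis, and in particular checking that $b_j\ge 2j$ is exactly what forces the required left/right relation in the single nontrivial case. Everything else --- the description of the King columns, the triviality of $L$ on them, and the single-column formula for $\varepsilon_{2m}$ --- is routine. As a consistency check, for $p=1$ one recovers $K^{C_n}_{\omega_2,0}(t)=\sum_{k=1}^{n-1}t^{2k}$, matching the final example of Section~6.
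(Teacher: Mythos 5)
Your proof is correct, and its type-$C$ half is essentially the paper's: both start from Theorem~\ref{kfking}, describe $K^0_{C_n}(\omega_{2p})$ as the columns with entry set $\{k_1,\overline{k_1},\ldots,k_p,\overline{k_p}\}$, $k_j\ge 2j$, observe that $L$ acts trivially after the substitution $k_j,\overline{k_j}\mapsto 2k_j-1,2k_j$, and arrive at the statistic $2\sum_{k\in K,\,k-1\notin K}(n-k+1)$. The genuine difference is in the type-$A$ half. The paper never invokes the Lascoux--Sch\"utzenberger charge: it evaluates $K^{A_{n-1}}_{\gamma_p,0}(t)$ via the crystal formula \eqref{ccha} (the statistic $\sum_i(n-i)\varepsilon_i$, quoted from \cite{LLT} and consistent with the paper's own charge-free derivation in Section~3), applied to the zero-weight Kashiwara--Nakashima columns, which form the weight-zero part of the type $A_{n-1}$ component of highest weight $\gamma_p$ inside the type $C_n$ crystal $B(\omega_{2p})$; the bijection is then King column $\leftrightarrow$ KN column with $d_i=c_i$, the constraint $d_i\ge 2i$ coming from the splitting characterization of KN columns (Definition~\ref{defkn}). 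You instead realize the weight-zero vertices as standard Young tableaux of shape $(2^p,1^{n-2p})$, parametrize them by their second column $B$ (with the same constraint $b_j\ge 2j$), and compute the LS charge by a reading-word analysis; your case analysis does go through --- in the case $l-1\in A$, $l=b_j\in B$, the entry $l-1$ sits in row $i=b_j-j\ge j$ of the first column, which is exactly where $b_j\ge 2j$ enters, and the symmetric case $l-1\in B$, $l\in A$ works similarly. The trade-offs: your route keeps the type-$A$ combinatorics elementary (two-column standard tableaux and reading words), but it imports the theorem of \cite{LSc1} --- that charge computes Kostka--Foulkes polynomials --- as a black box, a result the paper deliberately avoids using anywhere. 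Moreover, the paper's detour through KN columns is not incidental: this theorem is intended to prove a conjecture of \cite{lec} about a charge statistic defined on KN tableaux (cf.\ Remark~\ref{remkn}), so the explicit King--KN correspondence is part of the payoff of the paper's argument, which your bijection with standard tableaux does not produce.
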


Before proving this theorem, we need to describe the KN tableaux for some column shape $(1^k)$ \cite{HK}, which index the vertices of the type $C_n$ crystal $B(\omega_k)$. 

\begin{definition} {\rm A column-strict filling $C=(c_1<\ldots <c_k)$ with entries in $\{1<\ldots<n<\overline{n}<\ldots<\overline{1}\}$ is a KN column if there
is no pair $(z,\overline{z})$ of letters in $C$ such that: 
\[z = c_p\,,\;\;\;\;\;\overline{z} = c_q\,,\;\;\;\;\;q-p\le k - z\,.\]}
\end{definition}

We will need a different definition of KN columns, which was proved to be equivalent to the one above in \cite{Sh}.

\begin{definition}\label{defkn}{\rm 
 Let $C$ be a column and $I=\{x_1 > \ldots > x_r\}$ the set of unbarred letters $z$ such that
the pair $(z,\overline{z})$ occurs in $C$. The column $C$ can be split when there exists a set
of $r$ unbarred letters $J = \{y_1 > \ldots > y_r\} \subset\{1,\ldots,n\}$ such that:
\begin{itemize}
\item $y_1$ is the greatest letter in $\{1,\ldots,n\}$ satisfying: $y_1 < x_1$, $y_1\not\in C$, and $\overline{y_1}\not\in C$,
\item for $i = 2, ..., r$, the letter $y_i$ is the greatest one in $\{1,\ldots,n\}$ satisfying $y_i < \min(y_{i-1},x_i)$, $y_i\not\in C$, and $\overline{y_i}\not\in C$.
\end{itemize}
In this case, we say that $x_i$ is paired with $y_i$, and we write:
\begin{itemize}
\item $lC$ for the column obtained by changing $x_i$ into $y_i$ in $C$ for each letter $x_i\in I$, and by reordering if
necessary;
\item $rC$ for the column obtained by changing  $\overline{x_i}$ into $\overline{y_i}$ in $C$ for each letter $x_i\in I$, and by reordering if
necessary.
\end{itemize}
The pair $(lC,rC)$ will be called a split column.}
\end{definition}

\cellsize=3.1ex

\begin{example}\label{exkn}{\rm
The following is a KN column of height $5$ in type $C_n$ for $n\ge 5$, together with the corresponding split column:
\[C=\tableau{{4}\\{5}\\{\overline{5}}\\{\overline{4}}\\{\overline{3}}}\,,\;\;\;\;\;(lC,rC)=\tableau{{1}&{4}\\{2}&{5}\\{\overline{5}}&{\overline{3}}\\{\overline{4}}&{\overline{2}}\\{\overline{3}}&{\overline{1}}}\,.\]
We used the fact that $I=\{5>4\}$, so $J=\{2>1\}$. 
}
\end{example}

\cellsize=2.5ex

For the definition of the crystal operators on KN columns via the well-known bracketing rule, we refer to \cite{HK}.

\begin{proof}[Proof of Theorem {\rm \ref{ac}}] We use the King tableaux for computing $K^{C_n}_{\omega_{2p},0}(t)$ via Theorem~\ref{kfking}. Meanwhile, $K_{\gamma_p,0}^{A_{n-1}}(t)$ is computed based on an analogue of Theorem~\ref{charge-a}, namely
\begin{equation}\label{ccha}
K_{\lambda,0}^{A_{n-1}}(t)=\sum_{b\in B(\lambda)_{0}}t^{\sum_{i=1}^{n-1}(n-i)\varepsilon_i(b) }\,,
\end{equation}
which is referred to \cite{LLT}. For this computation, we use the crystal structure on the type $A_{n-1}$ component of highest weight $\gamma_p$ of $B(\omega_{2p})$, which contains the zero weight KN tableaux. 

First we need a bijection between the zero weight King tableaux and KN tableaux of shape $(1^{2p})$. Let $C_K=(c_1<\overline{c_1}<\ldots< c_p<\overline{c_p})$ be such a King tableau, which means that $c_i\ge 2i-1$ and $\overline{c_i}\ge 2i$, for $i=1,\ldots,p$; but these conditions are equivalent to $c_i\ge 2i$. Let $C_{KN}=(d_1<\ldots< d_p<\overline{d_p}<\ldots<\overline{d_1})$ be a zero weight KN column, where we note the different order used on the alphabet $\{1,\ldots,n,\overline{n},\ldots,\overline{1}\}$. The condition in Definition~\ref{defkn} implies that $d_i\ge 2i$ for any $i$, because $d_1,\ldots d_i$ need to be paired with distinct entries strictly less than $d_i$, which are also different from $d_1,\ldots,d_{i-1}$. One can check that the reciprocal is also true. Thus the desired bijection maps $C_K$ to $C_{KN}$ with $d_i=c_i$, which we now assume. 

Now let us calculate the exponent of the variable $t$ corresponding to $C_K$ in $K^{C_n}_{\omega_{2p},0}(t)$, as given by Theorem~\ref{kfking}. First we replace $c_i$ by $2c_i-1$ and $\overline{c_i}$ by $2c_i$, obtaining a column $C_K'$. Note that this is both a highest and lowest weight element with respect to the corresponding $U_q(\mathfrak{sl}_2\oplus\ldots\oplus\mathfrak{sl}_2)$-crystal structure, so $L(C_K)=C_K'$. Let $P:=\{c_i\in C_K\,|\,c_i-1\not\in C_K\}$. Note that the only type $A$ raising crystal operators which can be applied to $C_K'$ are $\widetilde{e}_{2p-2}$ for $p\in P$, and each can be applied only once. Thus, for each $p\in P$, we get a contribution of $2(n-p+1)$ to the mentioned exponent of $t$.

Finally, let us calculate the exponent of $t$ corresponding to $C_{KN}$ in $K_{\gamma_p,0}^{A_{n-1}}(t)$, as mentioned above, based on \eqref{ccha}. Let $C_{KN}^+:=(c_1<\ldots< c_p)$. Observe first that 
\[\varepsilon_{p-1}(C_{KN})=\varepsilon_{p-1}(C_{KN}^+)=\casetwoother{1}{p\in P}{0}\]
This means that, for each $p\in P$, we get a contribution of $n-p+1$ to the mentioned exponent of $t$. This concludes the proof. 
\end{proof}

\begin{remark}{\rm 
Theorem \ref{ac} also permits to establish the conjecture of \cite{lec} for
Lusztig $t$-analogues associated to any fundamental weight. Indeed, each
such fundamental weight is indexed by a column partition $\lambda
=(1^{k})=\omega _{k}$ with $k\leq n$ and the possible corresponding dominant
weights yielding nonzero polynomials have the form $\mu =(1^{a})=\omega _{a}$
where $k-a$ is a nonnegative even integer. We then have 
\begin{equation*}
K_{\omega _{k},\omega _{a}}^{C_{n}}(t)=K_{\omega _{k-a},0}^{C_{n-a}}(t).
\end{equation*}%
This follows in fact from a more general row removal property of Lusztig $t$%
-analogues of type $C_{n}$. Assume that $\lambda $ and $\mu $ are two
partitions such that $\lambda _{1}=\mu _{1}$ then%
\begin{equation*}
K_{\lambda ,\mu }^{C_{n}}(t)=K_{\lambda ^{\flat },\mu ^{\flat }}^{C_{n-1}}(t)
\end{equation*}%
where $\lambda ^{\flat }$ and $\mu ^{\flat }$ are the partitions obtained by
removing the part $\lambda _{1}=\mu _{1}$ in $\lambda $ and $\mu $,
respectively. This can be proved directly from the very definition of $%
K_{\lambda ,\mu }^{C_{n}}(t)$ in terms of partition function or by using the
Morris type recurrence formula established in \cite{lec}.}
\end{remark}

\subsection{The smallest power of $t$ in $K^{C_n}_{\lambda,0}(t)$}\label{S:smallestpower}

The largest power of $t$ in $K^{C_n}_{\lambda,0}(t)$ is well-known to be $\langle\lambda,\rho^\vee\rangle$, where  $\rho^\vee$ is half the sum of the positive coroots. Furthermore, it is also known that the smallest power is greater or equal to $|\lambda|/2$. See \cite{Lec2,Lec4}. As the third application of our formula for $K^{C_n}_{\lambda,0}(t)$, we will determine this smallest power. 

Let $\lambda\in{\mathcal P}_n$ be such that $|\lambda|$ is even, and write $\lambda=\sum_{i=1}^n a_i\,\omega_{n+1-i}$. Define
\[s_k:=\sum_{i=1}^k a_i\,,\;\;\;\;\;b_i:=\casethree{a_i+1}{\mbox{$a_i$ odd and $s_i$ odd}}{a_i-1}{\mbox{$a_i$ odd and $s_i$ even}}{a_i}{\mbox{$a_i$ even}}\]
Also let $s_0:=0$ and $S:=s_n$.

\begin{theorem}\label{minpower} The smallest power of $t$ in $K^{C_n}_{\lambda,0}(t)$ is
\begin{equation}\label{expminpower}\frac{1}{2}\sum_{i=1}^n (n+1-i) b_i=\frac{|\lambda|}{2}+\frac{1}{2}\sum_{i\,:\,a_i\:\rm{odd}}(-1)^{s_i-1}(n+1-i)\,.\end{equation}
\end{theorem}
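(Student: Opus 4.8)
The plan is to sidestep the (nonlocal) King-tableau statistic and instead read off the answer from the branching coefficients. By Proposition~\ref{br-rules}(3),
\[
K_{\lambda,0}^{C_n}(t)=\prod_{i=1}^n(1-t^{2i})\sum_{\nu\in\mathcal{P}_{2n}^{(2)}}t^{|\nu|/2}\,c_\nu^\lambda(\mathfrak{sp}_{2n}).
\]
The factor $\prod_{i=1}^n(1-t^{2i})$ has constant term $1$ and the coefficients $c_\nu^\lambda(\mathfrak{sp}_{2n})$ are non-negative, so the lowest-degree term of the product is $1$ times the lowest-degree term of the sum, with no cancellation. Hence the smallest power of $t$ in $K_{\lambda,0}^{C_n}(t)$ equals $\tfrac12\min\{|\nu|:\nu\in\mathcal{P}_{2n}^{(2)},\ c_\nu^\lambda(\mathfrak{sp}_{2n})\neq0\}$, and the problem becomes: find the smallest even-row partition $\nu$ (with at most $2n$ rows) for which $V^{\mathfrak{sp}_{2n}}(\lambda)$ occurs in $V^{\mathfrak{gl}_{2n}}(\nu)$.

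For the lower bound I would use Corollary~\ref{kwonbrrule}: $c_\nu^\lambda(\mathfrak{sp}_{2n})=\sum_\delta \overline{c}_{\lambda,\delta}^\nu$ with $\overline{c}_{\lambda,\delta}^\nu=|\overline{\rm LR}_{\lambda',\delta'}^{\nu'}|\le|{\rm LR}_{\lambda',\delta'}^{\nu'}|=c_{\lambda,\delta}^\nu$. As an ordinary Littlewood--Richardson coefficient, $c_{\lambda,\delta}^\nu$ is nonzero only when $\lambda\subseteq\nu$. Therefore $c_\nu^\lambda(\mathfrak{sp}_{2n})\neq0$ forces $\lambda\subseteq\nu$, for every $\nu\in\mathcal{P}_{2n}$. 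Writing $\hat\lambda_h:=\lambda_h+(\lambda_h\bmod2)$ for $\lambda_h$ rounded up to the nearest even integer, the conditions $\nu_h\ge\lambda_h$ and $\nu_h$ even give $\nu_h\ge\hat\lambda_h$ for all $h$, hence $|\nu|\ge|\hat\lambda|$. (One checks that $\hat\lambda$ is again a partition, and $\hat\lambda\in\mathcal{P}_{2n}^{(2)}$ with at most $n$ rows.)

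It then remains to prove that $\hat\lambda$ is actually attained, i.e. $c_{\hat\lambda}^\lambda(\mathfrak{sp}_{2n})\neq0$. Since $\hat\lambda$ has at most $n$ rows, the stable branching rule (Theorem~\ref{tailbij}) applies, $c_{\hat\lambda}^\lambda(\mathfrak{sp}_{2n})=\sum_{\delta\in\mathcal{P}_{2n}^{(1,1)}}c_{\lambda,\delta}^{\hat\lambda}$, so it suffices to exhibit one even-column $\delta$ with $c_{\lambda,\delta}^{\hat\lambda}\neq0$. The skew shape $\hat\lambda/\lambda$ has exactly one cell at the end of each odd-length row of $\lambda$; moreover cells sharing a column form a contiguous vertical strip, because such cells come from a maximal block of equal (odd) row-lengths, all of which are then odd rows. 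Setting $k:=\#\{h:\lambda_h\ \text{odd}\}$ — an even number since $|\lambda|$ is even — I would fill $\hat\lambda/\lambda$ by assigning the entry $j$ to the $j$-th odd row counted from the top. This filling is column-strict (consecutive entries down each contiguous strip) and its reverse reading word is $1\,2\cdots k$, which is a lattice word; so it is a Littlewood--Richardson tableau of content $\delta=(1^k)\in\mathcal{P}_{2n}^{(1,1)}$. Thus $c_{\lambda,(1^k)}^{\hat\lambda}\ge1$ and $\min|\nu|=|\hat\lambda|$.

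Finally $|\hat\lambda|=|\lambda|+k$, so the smallest power equals $\tfrac{|\lambda|}2+\tfrac12\#\{h:\lambda_h\ \text{odd}\}$. To recognize this as \eqref{expminpower}, I would prove the bookkeeping identity $\#\{h:\lambda_h\ \text{odd}\}=\sum_{i:\,a_i\ \text{odd}}(-1)^{s_i-1}(n+1-i)$: the substitution $h=n+1-i$ (with $s_i=\lambda_{n+1-i}$, $a_i=\lambda_{n+1-i}-\lambda_{n+2-i}$, and $\lambda_{n+1}=0$) rewrites the right-hand side as $\sum_{h:\,\lambda_h\not\equiv\lambda_{h+1}}(-1)^{\lambda_h-1}h$, and a one-line summation by parts with $q_h:=2(\lambda_h\bmod2)-1$ telescopes this to $\sum_h(\lambda_h\bmod2)$. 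Combined with the already-noted equality of the two displayed forms of \eqref{expminpower}, this yields the theorem. I expect the main obstacle to lie in the upper-bound construction — specifically, verifying that the column-sharing cells of $\hat\lambda/\lambda$ really form contiguous strips, so that the content $(1^k)$ filling is legitimate — rather than in the lower bound, which is immediate once the branching rule supplies $\lambda\subseteq\nu$; a direct proof through Theorem~\ref{kfking} would be harder precisely because $\mathrm{ch}_{C_n}(L(T))$ is nonlocal in $T$, whereas the reformulation above makes the extremal shape $\hat\lambda$ transparent.
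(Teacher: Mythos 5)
Your proof is correct, and it takes a genuinely different route from the paper's. The paper stays inside its new combinatorial formula (Theorem~\ref{kcn}): it minimizes $\mathrm{ch}_{C_n}$ over $\overline{D}_{2n}^{\ast}(\lambda)$ by first solving a one-row minimization via lattice-path moves (Lemma~\ref{minrow}), then constructing an explicit extremal filling $\sigma_{\min}$ (Algorithm~\ref{algmin}, Lemmas~\ref{oddeven} and~\ref{minfill}) whose existence shows the one-row bound is attained. You instead go back one step, to the series identity of Proposition~\ref{br-rules}(3): since $\prod_{i=1}^n(1-t^{2i})$ has constant term $1$ and the branching series has non-negative coefficients, the bottom coefficient of the product is exactly the bottom coefficient of the series (your ``no cancellation'' claim is airtight: any contribution to a degree below $d=\min\{|\nu|/2\}$ would need a coefficient of the series in degree $<d$, which vanishes, and the coefficient in degree $d$ is $1\cdot c_d>0$). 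This reduces the theorem to computing $\min\{|\nu|/2:\nu\in\mathcal{P}_{2n}^{(2)},\,c_\nu^\lambda(\mathfrak{sp}_{2n})\neq 0\}$; the lower bound $\nu\supseteq\lambda$, hence $\nu_h\ge\hat\lambda_h$, follows as you say from Corollary~\ref{kwonbrrule} (or equally from Sundaram's rule), and the upper bound from your explicit LR tableau of shape $\hat\lambda/\lambda$ and content $(1^k)$ — your verification that column-sharing cells form contiguous strips with consecutive entries is exactly what is needed, and the Sundaram positional condition would in any case be vacuous here since all entries lie in rows $\le n$, so you are not even tied to the stable rule of Theorem~\ref{tailbij}. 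Your closing identity $\#\{h:\lambda_h\ \mathrm{odd}\}=\sum_{i:a_i\ \mathrm{odd}}(-1)^{s_i-1}(n+1-i)$ is correct (each maximal run $[u,v]$ of odd parts contributes $v-(u-1)$, and this telescopes to the run length); it checks against the paper's example $\lambda=(7,6,5,3,1)$, where both give $13$. What each approach buys: yours is substantially shorter and makes the extremal shape $\hat\lambda$ (each part rounded up to even) completely transparent, bypassing the most technical part of the paper's argument; the paper's proof, in exchange, produces the actual minimizing tableaux in the distinguished/King-tableau model — thereby exhibiting, for instance, that the leading coefficient of $K_{\lambda,0}^{C_n}(t)$ can exceed $1$ — and serves as a demonstration of how to compute with the charge statistic of Theorems~\ref{kcn} and~\ref{kfking}, which is the stated purpose of the applications section.
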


We start by sketching the idea of the proof, whose details can be found in the next section. Based on Theorem~\ref{kcn}, we need to find the filling $\sigma\in\overline{D}_{2n}^{\ast}(\lambda)$ which minimizes 
\[\mathrm{ch}_{C_{n}}(\sigma)=\sum_{i=1}^{2n-1}(2n-i)\left\lceil \frac{\varepsilon _{i}(\sigma)}{2}\right\rceil \,.\]

We will first minimize $\mathrm{ch}_{C_{n}}(\sigma)$ for fillings $\sigma$ of the row shape $(S)$ with $1,\ldots,2n$, subject to certain conditions. Namely, let $\Sigma$ be the set of all $\sigma=(\sigma_1\le\ldots\le\sigma_S)$ satisfying
\begin{equation}\label{defsigma}\sigma_i\le n+k\,,\;\;\;\;\;\mbox{for $s_{k-1}<i\le s_k$, and $k=1,\ldots,n$}\,.\end{equation}
Note that this condition is a necessary one for the first row of a filling of $\lambda$ with $1,\ldots,2n$. Let us also define the sequence $c_1,\ldots,c_n$ by setting $c_i:=b_i$, except for the case in which, for the largest $i$ with $a_i$ odd, we have $s_i$ odd, in which case $c_i:=a_i$ (and $b_i:=a_i+1$). Note that $a_1+\ldots+a_n=c_1+\ldots+c_n=S$. 

\begin{lemma}\label{minrow} We have
\[\min_{\sigma\in\Sigma}\,\mathrm{ch}_{C_{n}}(\sigma)=\frac{1}{2}\sum_{i=1}^n (n+1-i) b_i\,,\]
and the minimum is attained for $\sigma_{\min}^{\rm{row}}:=((n+1)^{c_1}(n+2)^{c_2}\ldots(2n)^{c_n})$. 
\end{lemma}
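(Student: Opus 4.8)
The plan is to turn $\mathrm{ch}_{C_n}$ on a row into a weighted sum of the letter–multiplicities and then to prove the matching lower and upper bounds by an Abel–summation argument combined with the superadditivity of $x\mapsto\lceil x/2\rceil$. First I would record the multiplicity form of the statistic. For $\sigma=(\sigma_1\le\ldots\le\sigma_S)$ write $N_v$ for the number of entries of $\sigma$ equal to $v$, and set $G_u:=\#\{i:\sigma_i\le u\}=\sum_{v\le u}N_v$. Reading off the one–row crystal values $\varepsilon_i(\sigma)$ and keeping the contribution of the smallest letter, the statistic becomes
\[\mathrm{ch}_{C_n}(\sigma)=\sum_{v=1}^{2n}(2n+1-v)\left\lceil\frac{N_v}{2}\right\rceil\,,\]
while the defining inequalities of $\Sigma$ in \eqref{defsigma} say exactly that $G_{n+k}\ge s_k$ for $k=1,\ldots,n$, with $G_{2n}=S$. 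Thus the problem becomes a finite integer optimization in the variables $N_v\ge 0$, which is the form I would work with.

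For the lower bound I would use $2n+1-v=\#\{u:v\le u\le 2n\}$ to interchange the order of summation,
\[\mathrm{ch}_{C_n}(\sigma)=\sum_{u=1}^{2n}\;\sum_{v\le u}\left\lceil\frac{N_v}{2}\right\rceil\,,\]
and then apply the elementary superadditivity $\sum_{v\le u}\lceil N_v/2\rceil\ge\lceil(\sum_{v\le u}N_v)/2\rceil=\lceil G_u/2\rceil$. Discarding the nonnegative terms with $u\le n$ and keeping the indices $u=n+k$, the constraint $G_{n+k}\ge s_k$ and the monotonicity of the ceiling give
\[\mathrm{ch}_{C_n}(\sigma)\ge\sum_{k=1}^{n}\left\lceil\frac{s_k}{2}\right\rceil\,.\]

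Finally I would identify this bound with the asserted value and verify that $\sigma_{\min}^{\rm row}$ attains it. The key parity identity is $\sum_{l\le k}(b_l-a_l)=[\,s_k\text{ odd}\,]$: the indices with $a_l$ odd contribute $+1,-1,+1,-1,\ldots$ according to whether $s_l$ is odd or even, and the number of such indices $\le k$ has the parity of $s_k$; hence $\sum_{l\le k}b_l=2\lceil s_k/2\rceil$, and summing over $k$ yields $\sum_k\lceil s_k/2\rceil=\tfrac12\sum_l(n+1-l)\,b_l$. For $\sigma_{\min}^{\rm row}$ one has $N_{n+l}=c_l$, so $G_{n+k}=\sum_{l\le k}c_l\ge s_k$ because the partial sums of $c_l-a_l$ are always $0$ or $1$; the entries are weakly increasing and bounded by $2n$, so $\sigma_{\min}^{\rm row}\in\Sigma$. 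Moreover $\lceil c_l/2\rceil=b_l/2$ in every case, including the special index where $c_l=a_l$ is odd but $b_l=a_l+1$, so $\mathrm{ch}_{C_n}(\sigma_{\min}^{\rm row})=\sum_{l\le k}$ collapses to $\sum_k\lceil s_k/2\rceil$ and all the inequalities above become equalities.

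The slick part is the Abel–summation/superadditivity bound, which is where the constraints enter through $G_{n+k}\ge s_k$. I expect the main obstacle to be the parity bookkeeping in the last step: one must keep straight the three–way definition of $b_l$, show that the alternating partial sums stay nonnegative (which is precisely feasibility), and pin down the role of the special index, which forces $c_l\ne b_l$ exactly so that $\sum_l c_l=S$ while still $\lceil c_l/2\rceil=b_l/2$. Some care is also needed in the very first reduction, to justify that the term $2n\lceil N_1/2\rceil$ genuinely belongs in $\mathrm{ch}_{C_n}(\sigma)$ — equivalently, that small first–row entries are penalized — since this is what makes the lower bound nontrivial rather than identically zero.
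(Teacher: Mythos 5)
Your argument is correct in substance and follows a genuinely different route from the paper's proof. The paper proceeds dynamically: it introduces four local moves on multiplicity sequences $(i^{m_i})$, checks that each move weakly decreases the charge, visualizes elements of $\Sigma$ as lattice paths lying weakly below an ``upper bound path'', and then gives a multi-step algorithm connecting any such path to the target path of $\sigma_{\min}^{\rm{row}}$; the path-connectivity argument is the delicate part. You obtain the lower bound in one shot: writing $2n+1-v=\#\{u\,:\,v\le u\le 2n\}$, interchanging the two sums, and combining the superadditivity $\sum_{v\le u}\lceil N_v/2\rceil\ge\lceil G_u/2\rceil$ with the reformulation $G_{n+k}\ge s_k$ of \eqref{defsigma} gives $\mathrm{ch}_{C_n}(\sigma)\ge\sum_{k=1}^n\lceil s_k/2\rceil$ at once; your parity identity $\sum_{l\le k}b_l=2\lceil s_k/2\rceil$ identifies this bound with $\frac{1}{2}\sum_l(n+1-l)b_l$, and the feasibility and optimality of $\sigma_{\min}^{\rm{row}}$ (via $\lceil c_l/2\rceil=b_l/2$, including at the special index) close the argument exactly as in the paper. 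Your route is shorter and avoids the connectivity algorithm entirely; the paper's route is more explicit about how an arbitrary element of $\Sigma$ degenerates to the minimizer, but proves nothing stronger.

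The only genuine issue is the one you flagged yourself: the term $2n\lceil N_1/2\rceil$. Under the convention the paper uses (an entry $i\ge 2$ of the first row contributes to $\varepsilon_{i-1}$, i.e., the Japanese reading), the letter $1$ contributes to no $\varepsilon_i$, so the crystal charge of a one-row tableau is $\sum_{v=2}^{2n}(2n+1-v)\lceil N_v/2\rceil$: your $v=1$ term is not there, and no justification for it exists. Taken literally, the lemma is then problematic, since $(1^S)$ belongs to $\Sigma$ (condition \eqref{defsigma} imposes only upper bounds) and has charge $0$. But this is a defect of the lemma's formulation rather than of your proof: the paper's own argument has the same problem, because its moves (1) and (2) applied with $i=1$ can strictly increase the $\sum_{i\ge 2}$ version of the charge, contradicting \eqref{chsmaller}. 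The repair is to restrict $\Sigma$ to sequences with $N_1=0$ (equivalently, to adopt your formula with the $v=1$ term), and this costs nothing where the lemma is used: a tableau in $\overline{D}_{2n}^{\ast}(\lambda)$ contains no entry $1$ at all, since by (C3) the letters $1,2$ can occur only in its first row, whence $\varphi_1$ equals the number of $1$'s in the Japanese reading, and (C1) forces that number to be $0$. With this restriction your chain of inequalities, applied to the honest charge, goes through verbatim, and your proof is complete.
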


Now consider $\sigma\in\overline{D}_{2n}^{\ast}(\lambda)$. By the usual bracketing rule for crystal operators, see e.g. \cite{HK}, it is easy to see that all entries $i\ge 2$ in the first row of $\sigma$ contribute to $\varepsilon_{i-1}(\sigma)$. Thus, it suffices to construct $\sigma_{\min}\in\overline{D}_{2n}^{\ast}(\lambda)$ whose first row is $\sigma_{\min}^{\rm{row}}$, and for which no entry $i$ below the first row contributes to  $\varepsilon_{i-1}(\sigma_{\min})$. This is achieved with one mild failure of the last property; nevertheless, we always have $\mathrm{ch}_{C_{n}}(\sigma_{\min}^{\rm{row}})=\mathrm{ch}_{C_{n}}(\sigma_{\min})$, which is all that is needed.

Algorithm~\ref{algmin} describes the construction of $\sigma_{\min}$. In order to state it, we need some definitions and related results. Let $k_1<k_2<\ldots<k_p$ be the indexes $i$ for which $a_i$ is odd. We pair them from left to right  as $(k_1,k_2)$, $(k_3,k_4)$, $\ldots$, where $k_p$ is unpaired if $p$ is odd. Given such a pair $(k,k')$, we say that all the columns in $\lambda$ of heights $n+1-i$ with $k\le i\le k'$ form a block. This block is called odd or even, depending $k'-k$ being odd or even, respectively. A subblock of columns is formed by all columns of the same height in a given block. If $p$ is odd, we say that all columns of height at most $n+1-k_p$ form an incomplete block. 

We call a column of $\lambda$ special if it is the first one in a subblock, without being the first one of the corresponding block. Note that, if the first row of $\lambda$ is filled with the entries of $\sigma_{\min}^{\rm{row}}$, then a column is special if and only if its top entry is strictly smaller than the maximum possible, namely $n+i$ if $n+1-i$ is the corresponding column height. We call a special column odd if its top entry has the same parity as the column height. Note that this condition on a special column is equivalent to the bottom entry being odd (hence the name), when the column is filled with consecutive entries starting from the top one. 

\begin{lemma}\label{oddeven} {\rm (1)} The number of odd blocks is even unless $p$ and $n+1-k_p$ are odd.

{\rm (2)} The number of odd special columns in a block is odd or even, depending on the block being odd or even, respectively.

{\rm (3)} The total number of odd special columns is even unless $p$ and $n+1-k_p$ are odd.
\end{lemma}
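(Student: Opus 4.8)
The plan is to reduce all three assertions to parity bookkeeping, the only external input being that $|\lambda|$ is even. First I would record the elementary parity facts forced by the hypotheses. Since $a_i$ is even for every $i$ strictly between two consecutive odd indices $k_m,k_{m+1}$, the partial sums satisfy $s_{k_m}\equiv m\pmod 2$ (so the rule defining $b_i$ via the parity of $s_i$ alternates sign along $k_1,k_2,\ldots$), and the pairing $(k_1,k_2),(k_3,k_4),\ldots$ matches an index with odd $s$ to one with even $s$. Correspondingly, $c_l-a_l$ equals $+1,-1,+1,-1,\ldots$ at $k_1,k_2,k_3,\ldots$, cancelling in pairs, with the last one forced to $0$ when $p$ is odd (the special-case choice $c_{k_p}=a_{k_p}$). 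I would then translate $|\lambda|$ even into the key congruence
\[\sum_{m=1}^p (n+1-k_m)\equiv 0\pmod 2\,,\]
using $|\lambda|\equiv\sum_i a_i(n+1-i)\equiv\sum_{m=1}^p(n+1-k_m)\pmod 2$.

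For assertion (1), I would note that the number of odd blocks is congruent mod $2$ to $\sum_r(k_{2r}-k_{2r-1})$, summed over the complete pairs, which equals $\sum_{m=1}^{2\lfloor p/2\rfloor}(-1)^m k_m$ and hence is $\equiv\sum_{m=1}^{2\lfloor p/2\rfloor}k_m\pmod 2$. When $p$ is even this is $\sum_{m=1}^p k_m\equiv p(n+1)\equiv 0$ by the key congruence; when $p$ is odd it is $\sum_{m=1}^{p-1}k_m\equiv p(n+1)-k_p\equiv (n+1)-k_p\equiv n+1-k_p$. Thus the count is even precisely unless $p$ and $n+1-k_p$ are both odd.

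Assertion (2) is the heart of the argument, and where I expect the main difficulty. Here I would work inside a fixed complete block $(k,k')=(k_{2r-1},k_{2r})$ and compute the top entries of its special columns directly from $\sigma_{\min}^{\rm row}$, using the characterization that a column is special iff its top entry is strictly below the maximal value $n+i$ for a column of height $n+1-i$. The crucial quantitative input is the shift $s_i'-s_i$, where $s_i':=c_1+\ldots+c_i$: the alternating $\pm1$ pattern above gives $s_i'-s_i=1$ for $k\le i\le k'-1$, while $s_{k-1}'-s_{k-1}=s_{k'}'-s_{k'}=0$. Consequently, if $j_0=k<j_1<\ldots<j_q=k'$ are the nonempty height classes (subblocks) of the block, then the first column of subblock $j_t$ (for $t\ge1$) sits at row-position $s_{j_t-1}+1=s_{j_t-1}'$, whose entry in $\sigma_{\min}^{\rm row}$ is $n+j_{t-1}$ (the value of the last nonvanishing group $\le j_t-1$, the intermediate $c_l$ vanishing, and the possible vanishing of $c_{k'}$ being harmless since one only looks at indices $<k'$). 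Matching the parity of this top entry $n+j_{t-1}$ against that of the height $n+1-j_t$ shows that subblock $j_t$ yields an odd special column exactly when $j_{t-1}\not\equiv j_t\pmod 2$. The number of odd special columns in the block is therefore the number of parity changes in $j_0,\ldots,j_q$, which is $\equiv j_q-j_0=k'-k\pmod 2$, i.e.\ the parity of the block.

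Finally, assertion (3) follows by summing (2) over all complete blocks: the total number of odd special columns is $\equiv\sum_{\text{blocks}}(k'-k)\equiv(\text{number of odd blocks})\pmod 2$, and one concludes by (1). Here I would also observe that, when $p$ is odd, the incomplete block contributes nothing: the special-case choice $c_{k_p}=a_{k_p}$ forces $s_i'-s_i=0$ throughout the range $i\ge k_p$, so every first column of a subblock there already carries its maximal entry and is not special. The main obstacle is the bookkeeping in (2) --- correctly pinning down the top entry $n+j_{t-1}$ of each special column, which rests on the unit shift $s_i'-s_i=1$ inside the block together with the vanishing of the intermediate $c_l$. Once this identification is in place, the reduction of the count to a telescoping parity-change argument is routine.
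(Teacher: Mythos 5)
Your proof is correct and follows essentially the same route as the paper's: part (1) is parity bookkeeping derived from $|\lambda|$ being even, part (2) is the telescoping parity-change argument over the subblocks of a fixed block, and part (3) combines the two. The only difference is one of detail (to your credit): via the shift $s_i'-s_i$ you explicitly verify the paper's unproven assertion that a special column is odd exactly when its height differs in parity from the previous column's height, and you also check explicitly that the incomplete block contributes no special columns, a point the paper leaves implicit.
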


\begin{algorithm}\label{algmin} {\rm Construction of $\sigma_{\min}$.
\begin{description}
\item[Step 1] Fill the first row of $\lambda$ with the entries of $\sigma_{\min}^{\rm{row}}$. 
\item[Step 2] Fill all columns except the odd special ones with consecutive entries starting from the top entry. 
\item[Step 3] Fill the odd special columns, considered from right to left, as follows. 
\begin{itemize}
\item If the last entry of the previously filled odd special column (assuming it exists) is $2i-1$, then the current one will contain $2i$, but not $2i-1$. 
\item The above rule is also applied to the rightmost odd special column if $p$ and $n+1-k_p$ are odd, where $2i=n+k_p$ is the top entry in each column of height $n+1-k_p$.
\item With the above rules in place, fill the current special column by considering consecutive entries starting from the top one.
\end{itemize}
\end{description} }
\end{algorithm}

\begin{lemma}\label{minfill} The filling $\sigma_{\min}$ belongs to $\overline{D}_{2n}^{\ast}(\lambda)$. Furthermore, no entry $i$ below the first row contributes to  $\varepsilon_{i-1}(\sigma_{\min})$ with one exception: if $p$ and $n+1-k_p$ are odd, then one entry $n+k_p$ below the first row contributes to $\varepsilon_{n+k_p-1}(\sigma_{\min})$. In addition, we always have  $\mathrm{ch}_{C_{n}}(\sigma_{\min}^{\rm{row}})=\mathrm{ch}_{C_{n}}(\sigma_{\min})$.
\end{lemma}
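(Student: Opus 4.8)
The plan is to verify the three assertions of Lemma~\ref{minfill} in turn, throughout exploiting that, by Steps 2 and 3 of Algorithm~\ref{algmin}, every column of $\sigma_{\min}$ is filled with consecutive integers read from the top, the odd special columns being the only ones where a single value is skipped. I would begin by checking that $\sigma_{\min}$ is a genuine SSYT of shape $\lambda$ satisfying condition (C3). Columns are strictly increasing since they are filled with consecutive (or, for odd special columns, near-consecutive) entries from the top; rows are weakly increasing since the first row is $\sigma_{\min}^{\rm row}$, which obeys \eqref{defsigma}, and the lower rows inherit this from the consecutive filling. As every entry lies in $\{n+1,\ldots,2n\}$ and the entry in row $i$ of any column is at least $n+i\ge 2i-1$, condition (C3) holds automatically.

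The substantial part of the membership claim is (C1$'$)/(C2$'$), the bracketing and parity conditions on each pair $\{2i-1,2i\}$. Here I would read off $N_{2i-1}(\sigma_{\min})$ and $N_{2i}(\sigma_{\min})$ from the block/subblock decomposition: inside a consecutively filled column the letters $2i-1$ and $2i$ occur as a vertical pair, the only imbalance coming from columns whose top entry is deficient, i.e.\ the special columns. The definitions of odd versus even special columns and the right-to-left toggling in Step 3 are engineered precisely to balance these counts, so that Lemma~\ref{oddeven}~(2) yields the correct parity of $N_{2i}(\sigma_{\min})-N_{2i-1}(\sigma_{\min})$ on each subblock, giving (C2$'$), while the order of the toggles produces the one-sided inequality in the Japanese-reading bracketing, giving (C1$'$). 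I expect this to be the main obstacle, as it requires careful bookkeeping of the Step 3 toggles together with an appeal to Lemma~\ref{oddeven} to ensure that the toggles close up, the single loose end arising exactly when $p$ and $n+1-k_p$ are odd.

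Next I would prove the localization statement via the bracketing rule for $\widetilde{e}_{i-1}$ on $B_{2n}(\lambda)$. Because each column is consecutively filled, every letter $i$ strictly below the first row is immediately preceded in the Japanese reading by the letter $i-1$ lying directly above it in the same column; these form cancelling pairs in the $(i-1,i)$-signature. Hence each below-row $i$ is cancelled, and the below-row $i-1$'s are consumed internally and cannot cancel any first-row $i$; combined with the already-noted fact that every first-row $i$ contributes to $\varepsilon_{i-1}$, this yields $\varepsilon_{i-1}(\sigma_{\min})=\varepsilon_{i-1}(\sigma_{\min}^{\rm row})$ for all $i$. The only breakdown is the leftover toggle on the rightmost odd special column in the case where $p$ and $n+1-k_p$ are odd, which is exactly the parity exception of Lemma~\ref{oddeven}~(3); it leaves precisely one below-row letter $n+k_p$ uncancelled, contributing to $\varepsilon_{n+k_p-1}$ as asserted.

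Finally, the equality $\mathrm{ch}_{C_n}(\sigma_{\min}^{\rm row})=\mathrm{ch}_{C_n}(\sigma_{\min})$ follows from the formula $\mathrm{ch}_{C_n}(b)=\sum_i (2n-i)\lceil\varepsilon_i(b)/2\rceil$ of Theorem~\ref{kcn}: at every non-exceptional index the summands coincide by the localization just established, while at the exceptional index $n+k_p-1$ the value of $\varepsilon$ increases by exactly one. To see that the ceiling absorbs this, I would check that $\varepsilon_{n+k_p-1}(\sigma_{\min}^{\rm row})$ is odd in this case, so that $\lceil(\varepsilon+1)/2\rceil=\lceil\varepsilon/2\rceil$; this last parity is once more governed by the count of odd special columns coming from Lemma~\ref{oddeven}. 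This gives the claimed equality and completes the proof.
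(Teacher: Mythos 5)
Your third paragraph contains a genuine gap, and it is precisely at the heart of the lemma. You claim that ``every letter $i$ strictly below the first row is immediately preceded in the Japanese reading by the letter $i-1$ lying directly above it in the same column,'' so that all below-first-row letters cancel within their own columns, with a single exception in the case where $p$ and $n+1-k_p$ are odd. This premise is false: by Step 3 of Algorithm~\ref{algmin}, every other odd special column is filled with a \emph{skipped} entry, i.e.\ it has one of the forms $(j,\ldots,2i-2,\widehat{2i-1},2i,2i+1,\ldots,2l)$ or $(j,\ldots,2l-1,2l,2i)$, and its letter $2i$ sits below the first row with no $2i-1$ above it. Such columns occur whenever there are at least two odd special columns (not just in the exceptional case), so within-column cancellation genuinely fails for them. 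These letters only become harmless through a \emph{cross-column} bracketing: reading columns right to left, the uncancelled bottom entry $2i-1$ of the adjacent, consecutively filled odd special column to the right is read before the skipped $2i$, and the two bracket each other. Arranging for exactly this pairing is the whole point of the right-to-left rule in Step 3, and Lemma~\ref{oddeven}~(3) is what guarantees the pairs close up, leaving the single uncancelled $2i=n+k_p$ exactly when $p$ and $n+1-k_p$ are odd. The paper's proof consists of enumerating the three possible column forms and then analyzing the bracketing of the column word in the letters $\{2i-1,2i\}$ (and separately $\{2i,2i+1\}$) after discarding same-column pairs; none of this can be replaced by your purely local argument.

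Relatedly, your second paragraph (conditions (C1$'$)/(C2$'$)) names the right ingredients but explicitly defers the verification (``I expect this to be the main obstacle\ldots careful bookkeeping of the Step 3 toggles''), so the membership claim is also not actually proved; the same cross-column analysis above is what settles it, since $\varphi_{2i-1}(\sigma_{\min})=0$ requires every surviving $2i-1$ (the bottoms of the consecutively filled odd special columns) to be bracketed by a $2i$ from a column further left. Your first paragraph and your final paragraph are fine: the SSYT/(C3) check matches the paper (though you should note, as the paper does, that the columns whose entries are increased by the skip are followed by columns whose entries are the largest possible, so rows stay weakly increasing), and the ceiling argument for the charge equality is correct, since in the exceptional case $\varepsilon_{n+k_p-1}(\sigma_{\min}^{\rm row})=c_{k_p}=a_{k_p}$ is odd, so $\lceil(\varepsilon+1)/2\rceil=\lceil\varepsilon/2\rceil$.
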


\begin{proof}[Proof of Theorem~\rm{\ref{minpower}}] By Lemma~\ref{minfill}, $\mathrm{ch}_{C_{n}}(\sigma_{\min})$ is given by the expression in \eqref{expminpower}. Lemma~\ref{minrow} guarantees that this is the minimum of the charge over $\overline{D}_{2n}^{\ast}(\lambda)$. Thus, Theorem~\ref{minpower} is proved. 
\end{proof}

In conclusion, all that is left is to prove Lemmas~{\rm \ref{minrow}},~\ref{oddeven},~and~{\rm \ref{minfill}}, which is done in Section~\ref{prooflemmas}.

We will now give an example of the construction of $\sigma_{\min}$. We will also exhibit a second filling, with the same shape and first row as $\sigma_{\min}$, which also satisfies the properties in Lemma~\ref{minfill}. This will have the same charge as $\sigma_{\min}$, which shows that the coefficient of the smallest power of $t$ in $K^{C_n}_{\lambda,0}(t)$ can be strictly larger than $1$.

\cellsize=3.5ex

\begin{example}{\rm Let $n=5$ and $\lambda=(7,6,5,3,1)$. The sequence $(a_i)$ is $(1,2,2,1,1)$, and thus there is a single block, which is odd and consists of all the columns of $\lambda$ except the last one. The special columns are the second, the fourth, and the sixth; they are all odd. The sequences $(b_i)$ and $(c_i)$ are $(2,2,2,0,2)$ and $(2,2,2,0,1)$, respectively, while $\sigma_{\min}^{\rm{row}}=(6,6,7,7,8,8,10)$. The filling $\sigma_{\min}$ and a different one with the same charge are 
\[  \tableau{6&6&7&{{7}}&8&{{8}}&{10}\\7&7&{{8}}&8&9&10\\8&8&9&{\mathbf{9}}&10\\9&{\mathbf{10}}&10\\10} \,,\;\;\;\;\;  \tableau{6&6&7&{{7}}&8&{{8}}&{10}\\7&7&{{8}}&8&9&{\mathbf{9}}\\8&8&9&{\mathbf{10}}&10\\9&{{10}}&10\\10}  \,.\]
It is straightforward to check that the above fillings satisfy the properties in Lemma~\ref{minfill}; in particular, the highlighted entries are bracketed in the usual procedure for applying crystal operators. Thus, both of these fillings have charge $|\lambda|/2+1/2(5-2+1)=11+2=13$. 

Below is a different filling $\sigma_{\min}$, which illustrates another aspect of Algorithm~\ref{algmin}.
\[\tableau{6&6&9&{\mathbf{9}}\\7&{\mathbf{10}}&10\\8\\9\\10}\,.\]
}\end{example}

\subsection{The proof of the lemmas in Section~{\rm \ref{S:smallestpower}}}\label{prooflemmas} 

The terminology and notation in the previous section will be used. We start with Lemma \ref{minrow}. We first define the following moves $\sigma\rightarrow\sigma'$ on sequences $\sigma=(i^{m_i})_{i=1,\ldots,2n}$ in $\Sigma$, assuming that $\sigma'$ is still in $\Sigma$:
\begin{enumerate}
\item $(\ldots,i^{k+2},\ldots)\rightarrow(\ldots,i^{k},i+1,i+1,\ldots)$;
\item $(\ldots,i^{2k+1},\ldots)\rightarrow(\ldots,i^{2k},i+1,\ldots)$;
\item $(\ldots,i^{2k-1},j^{l+1},\ldots)\rightarrow(\ldots,i^{2k},j^l,\ldots)$;
\item $(\ldots,i^{2k},j^{2l-1},\ldots)\rightarrow(\ldots,i^{2k-1},j^{2l},\ldots)$.
\end{enumerate}
It is not hard to see that in all cases we have 
\begin{equation}\label{chsmaller}
\mathrm{ch}_{C_{n}}(\sigma)\ge \mathrm{ch}_{C_{n}}(\sigma')\,;
\end{equation}
moreover, in case (4) we always have equality. Indeed, note that
\[\mathrm{ch}_{C_{n}}(\sigma)=\sum_{i=2}^{2n}(2n+1-i)\left\lceil\frac{m_i}{2}\right\rceil\,;\]
based on this, it suffices to observe that if we insert an entry $i>1$ into a sequence $\sigma$, the charge increases by $2n+1-i$, if $m_i$ is even, and does not change, otherwise. %; then for each move, we compare the charge of the two sides with the charge of the sequence obtained from both sides by removing an entry or two.

It is helpful to visualize the moves (1)-(4) using the following representation of a sequence $\sigma=(i^{m_i})_{i=1,\ldots,2n}$ in $\Sigma$ as a lattice path from $(0,0)$ to $(S,2n)$ with steps $(1,0)$ and $(0,1)$. The horizontal segments in this path are 
\[(m_1+\ldots+m_{i-1},i)\rightarrow(m_1+\ldots+m_i,i)\,,\;\;\;\;\mbox{for $m_i>0$, $i=1,\ldots,2n\,.$}\]
Note that condition~\eqref{defsigma} defining $\Sigma$ simply means that this path stays weakly below the similar path from $(0,0)$ to $(S,2n)$, whose horizontal segments are 
\[(s_{i-1},n+i)\rightarrow(s_i,n+i)\,,\;\;\;\;\mbox{for $a_i>0$, $i=1,\ldots,n\,.$}\]
The latter path will be called the upper bound path. We will also consider the path corresponding to $\sigma_{\min}^{\rm{row}}$, which will be called the target path. 

Now recall that $k_1<k_2<\ldots<k_p$ are the indexes $i$ for which $a_i$ is odd, which are paired $(k_1,k_2)$, $(k_3,k_4)$, etc. For each such pair $(k,k')$, we consider the subpath of the upper bound path between the horizontal segments with $y=n+k$ and $y=n+k'$, inclusive, plus the vertical segment after the last horizontal one; we call it an odd subpath. If $p$ is odd, the subpath between the horizontal segment with $y=n+k_p$ and the end of the path is called an incomplete odd subpath. The subpaths obtained by removing the odd ones are called even. 

Now note that the upper bound path and the target one are closely related. Namely, every even subpath of the former coincides with a corresponding subpath of the latter, and so does the incomplete odd subpath (if any). Moreover, for every odd subpath of the former, there is a corresponding one in the latter  whose vertical segments are translations by $(1,0)$ of the vertical segments of the former; the exception are the last vertical segments in the two paths, which coincide. Thus, we can also divide the target path into even, odd, and incomplete odd subpaths.

\begin{proof}[Proof of Lemma~\rm{\ref{minrow}}] In terms of the above visualization, and based on~\eqref{chsmaller}, it suffices to show that any path that is weakly below the upper bound path (including the latter) can be related to the target path by applying the moves (1)-(4). This can be done as follows, using a sequence of intermediate paths. See Example~\ref{expaths} for an illustration of this procedure.  
\begin{description}
\item[Step 1] By applying the moves (1) and (2), from southwest to northeast in the current path, we obtain a path in which every vertical segment coincides with the corresponding one of the upper bound path, or with its translation by $(1,0)$; moreover, for the first vertical segment (starting at $(0,0)$), the first statement holds. Divide the obtained path into even, odd, and incomplete odd subpaths.
\item[Steps 2-4] These steps are applied to the subpaths of the path in Step 1, considered from southwest to northeast. As a result, each subpath will coincide with the corresponding one of the target path.
\item[Step 2] The moves (2) are applied to an even subpath. 
\item[Step 3] The moves (2) and (3) are applied to an odd subpath.
\item[Step 4] The moves (2) and (4) are applied to the incomplete odd subpath (if any).
\end{description}
\end{proof}

\begin{example}\label{expaths}{\rm Let $n=7$, and let the sequence $(a_i)$ be $(1,0,2,1,1,2,2)$. We have $\sigma_{\min}^{\rm{row}}=(8,8,10,10,12,13,13,14,14)$, which corresponds to the target path, while the upper bound path corresponds to $(8,10,10,11,12,13,13,14,14)$. Both of these paths consist of an odd subpath and an incomplete odd subpath. Consider $\sigma=(7,7,7,9,10,11,11,12,14)$. Its corresponding path is represented in the first diagram below, whose bottom left corner has coordinates $(0,7)$, while the upper bound path appears in all three diagrams. The second diagram represents the result of Step 1 in the above algorithm; move (1) was applied six times, while move (2) twice. The last diagram contains the target path, which is obtained from the path in the second diagram via Step 3 followed by Step 4. In Step 3, move (3) was applied twice, and after that move (2) once; in Step 4, move (4) was applied twice (from northeast to southwest). 

\setlength{\unitlength}{0.5cm}
$\!\!$\begin{picture}(11,9)
\linethickness{0.075mm}
\multiput(0,0)(1,0){10}{\line(0,1){7}}
\multiput(0,0)(0,1){8}{\line(1,0){9}}
\linethickness{0.4mm}
\put(0,0){\line(0,1){1}}\put(0,1){\line(1,0){1}}
\put(1,1){\line(0,1){2}}\put(1,3){\line(1,0){2}}
\put(3,3){\line(0,1){1}}\put(3,4){\line(1,0){1}}
\put(4,4){\line(0,1){1}}\put(4,5){\line(1,0){1}}
\put(5,5){\line(0,1){1}}\put(5,6){\line(1,0){2}}
\put(7,6){\line(0,1){1}}\put(7,7){\line(1,0){2}}
\put(0,0){\line(1,0){3}}\put(3,0){\line(0,1){2}}
\put(3,2){\line(1,0){1}}\put(4,2){\line(0,1){1}}
\put(4,3){\line(1,0){1}}\put(5,3){\line(0,1){1}}
\put(5,4){\line(1,0){2}}\put(7,4){\line(0,1){1}}
\put(7,5){\line(1,0){1}}\put(8,5){\line(0,1){2}}
\put(8,7){\line(1,0){1}}
\end{picture}
$\!$\begin{picture}(11,9)
\linethickness{0.075mm}
\multiput(0,0)(1,0){10}{\line(0,1){7}}
\multiput(0,0)(0,1){8}{\line(1,0){9}}
\linethickness{0.4mm}
\put(0,0){\line(0,1){1}}\put(0,1){\line(1,0){1}}
\put(1,1){\line(0,1){2}}\put(1,3){\line(1,0){2}}
\put(3,3){\line(0,1){1}}\put(3,4){\line(1,0){1}}
\put(4,4){\line(0,1){1}}\put(4,5){\line(1,0){1}}
\put(5,5){\line(0,1){1}}\put(5,6){\line(1,0){2}}
\put(7,6){\line(0,1){1}}\put(7,7){\line(1,0){2}}
\put(0,0){\line(0,1){1}}\put(0,1){\line(1,0){1}}
\put(1,1){\line(0,1){2}}\put(1,3){\line(1,0){2}}
\put(3,3){\line(0,1){1}}\put(3,4){\line(1,0){2}}
\put(5,4){\line(0,1){1}}\put(5,5){\line(1,0){1}}
\put(6,5){\line(0,1){1}}\put(6,6){\line(1,0){2}}
\put(8,6){\line(0,1){1}}\put(8,7){\line(1,0){1}}
\end{picture}
$\!$\begin{picture}(11,9)
\linethickness{0.075mm}
\multiput(0,0)(1,0){10}{\line(0,1){7}}
\multiput(0,0)(0,1){8}{\line(1,0){9}}
\linethickness{0.4mm}
\put(0,0){\line(0,1){1}}\put(0,1){\line(1,0){1}}
\put(1,1){\line(0,1){2}}\put(1,3){\line(1,0){2}}
\put(3,3){\line(0,1){1}}\put(3,4){\line(1,0){1}}
\put(4,4){\line(0,1){1}}\put(4,5){\line(1,0){1}}
\put(5,5){\line(0,1){1}}\put(5,6){\line(1,0){2}}
\put(7,6){\line(0,1){1}}\put(7,7){\line(1,0){2}}
\put(0,0){\line(0,1){1}}\put(0,1){\line(1,0){2}}
\put(2,1){\line(0,1){2}}\put(2,3){\line(1,0){2}}
\put(4,3){\line(0,1){2}}\put(4,5){\line(1,0){1}}
\put(5,5){\line(0,1){1}}\put(5,6){\line(1,0){2}}
\put(7,6){\line(0,1){1}}\put(7,7){\line(1,0){2}}
\end{picture}

}
\end{example}

We conclude by proving Lemmas \ref{oddeven} and \ref{minfill}.

\begin{proof}[Proof of Lemma~\rm{\ref{oddeven}}] It is not hard to see that the number of boxes in an odd (resp. even) block is odd (resp. even); in addition, if $p$ is odd, the number of boxes in the incomplete block is even unless $n+1-k_p$ is odd (recall that this number represents the height of the first column in the incomplete block). Based on this and the fact that $|\lambda|$ is even, the first statement is immediate. 

Now let us consider a block corresponding to a pair $(k,k')$, i.e., it contains all columns of heights $n+1-i$ for $k\le i\le k'$. A special column is odd or even depending on the difference between its height and the height of the previous column being odd or even. But the sum of all these numbers is the difference between the heights of the first and last columns in the block, namely $(n+1-k)-(n+1-k')=k'-k$. The second statement now follows from the fact that the parity of a block is determined by $k'-k$. The third statement is an immediate consequence of the first two.
\end{proof}

\begin{proof}[Proof of Lemma~\rm{\ref{minfill}}] It is not hard to see that the filling $\sigma_{\min}$ is a semistandard Young tableau satisfying the flag condition (C3) in Section~\ref{ranknlusz}. Indeed, for semistandardness, observe first that if the first row of $\lambda$ is $\sigma_{\min}^{\rm{row}}$ and we fill all columns with consecutive entries starting from the top one, we clearly obtain a semistandard tableau. To obtain $\sigma_{\min}$, the only change we need is a certain increase in the entries of every other odd special column starting with the leftmost one. But in each case the entries of the next column are the largest possible, so the weakly increasing condition for rows is still verified. 

To complete the proof, it suffices to check the following properties for the column word of  $\sigma_{\min}$, the first two of which rely on the usual bracketing rule for crystal operators~\cite{HK}.
\begin{enumerate}
\item[(P1)] After bracketing $(2i,2i-1)$, there is no unbracketed $2i-1$; also, there is no unbracketed $2i$ below the first row with one exception: a single $2i=n+k_p$ if $p$ and $n+1-k_p$ are odd.
\item[(P2)] For any pair $(2i+1,2i)$, there is no unbracketed $2i+1$ below the first row.
\item[(P3)] Each even entry in the first row occurs an even number of times with one exception: $n+k_p$ if $p$ and $n+1-k_p$ are odd. 
\end{enumerate}

Property (P3) is immediate from the construction of $\sigma_{\min}^{\rm{row}}$. By analyzing Algorithm~\ref{algmin}, observe that the columns of $\sigma_{\min}$ have the following structure (the notation $\widehat{m}$ indicates the absence of the element $m$ in a sequence).
\begin{itemize}
\item Every other odd special column starting with the second leftmost one is of the form $(j,j+1,\ldots,2i-2,2i-1)$.
\item Every other odd special column starting with the leftmost one is of the form: $(j,j+1,\ldots,2i-2,\widehat{2i-1},2i,2i+1,\ldots,2l)$ with $i\le l$, or $(j,j+1,\ldots,2l-1,2l,2i)$ with $l<i$.
\item A non-odd special column is of the form $(j,j+1,\ldots,2n-1,2n)$.
\end{itemize}
In particular, the second fact follows from the first two rules in Step~3 of Algorithm~\ref{algmin}. Based on these facts, we can describe each  bracketing for the column word of  $\sigma_{\min}$, which will prove (P1) and (P2).

Let us first bracket $(2i,2i-1)$ and ignore all such pairs coming from the same column of $\sigma_{\min}$. The remaining subword in these letters starts with a set of pairs $(2i,2i-1)$ coming from successive odd special columns, and ends with an even number of $2i$; the latter are all in the first row, with the one exception indicated in (P1) above, which corresponds to the number of odd special columns being odd (the entry $2i$ below the first row is in the rightmost odd special column). Here we applied Lemma~\ref{oddeven}~(3). Now let us bracket $(2i+1,2i)$ and again ignore all such pairs coming from the same column of $\sigma_{\min}$. The remaining subword in these letters consists of a sequence of $2i$ followed by a sequence of $2i+1$, where all the elements of the latter are in the first row. Indeed, no column can contain $2i+1$ below the first row but no $2i$ above it. 
\end{proof}

%******************

\section{Comparing the Sundaram and Kwon branching rules}\label{sunkwo}

The work in Sections~\ref{sundaram-lr} and \ref{genexpc} raises the question whether the Sundaram and Kwon branching rules (mentioned in those sections) are, in fact, equivalent. Based on the results above, we discuss what this equivalence entails, and we present an example which provides evidence for an affirmative answer. 

We consider the branching coefficient $c_{\nu}^{\lambda}(\mathfrak{sp}_{2n})$, for fixed $\lambda\in{\mathcal P}_n$ and $\nu\in{\mathcal P}_{2n}$. The Sundaram rule says that $c_{\nu}^{\lambda}(\mathfrak{sp}_{2n})$ is the number of Sundaram-LR tableaux of shape $\nu/\lambda$ and content $\delta$, for some $\delta\in{\mathcal P}_{2n}^{(1,1)}$. By Corollary~\ref{kwonbrrule} and Lemma~\ref{lemdelta}, the same coefficient is expressed as the number of LR tableaux $T$ in $LR_{\lambda,\delta}^\nu$ for which $S(T)$ satisfies the flag condition (C3) in Section~\ref{ranknlusz}, where $\delta\in{\mathcal P}_{2n}^{(1,1)}$ (recall the notation in Section~\ref{lrcs}). 

To relate the two types of tableaux, we need to consider the composition of the following maps:
\begin{equation}\label{compmaps}\{\mbox{LR tableaux of shape $\nu/\lambda$, content $\delta$}\}\:\xRightarrow{\rm{companion}}\:LR_{\delta\lambda}^\nu \:\xRightarrow{R\mbox{-}\rm{matrix}}\: LR_{\lambda,\delta}^\nu \:\xRightarrow{S}\: S(LR_{\lambda,\delta}^\nu)\,.\end{equation}
For the combinatorial $R$-matrix, we use the Henriques-Kamnitzer commutor \cite{hakccc,katccd}, which has several other realizations, cf. \cite{aktlr} and the references therein. Note that the Henriques-Kamnitzer commutor was defined in terms of the Sch\"utzenberger involution, which connects it to the last map in \eqref{compmaps}, namely the Sch\"utzenberger involution in the crystal $B_{2n}(\lambda)$.

The main question is whether the composition \eqref{compmaps} bijects the tableaux mentioned above, coming from the Sundaram and Kwon branching rules. The example below suggests an affirmative answer.

\begin{example}{\rm Consider $n=3$, $\lambda=(2,1,1)$, and $\nu=(5,4,3,3,3,2)$, with $c_{\nu}^{\lambda}(\mathfrak{sp}_{6})=1$. There are three LR tableaux of shape $\nu/\lambda$ for which the corresponding $\delta$ is in ${\mathcal P}_{2n}^{(1,1)}$. We indicate them below, together with the result of applying the maps in \eqref{compmaps}.

(1) $\delta=(3,3,3,3,2,2)$.

\[\tableau{{}&{}&1&1&1\\{}&2&2&2\\{}&3&3\\3&4&4\\4&5&5\\6&6}\:\xRightarrow{\rm{companion}}\:\tableau{1&1&1\\2&2&2\\3&3&4\\4&4&5\\5&5\\6&6}\:\xRightarrow{R\mbox{-}\rm{matrix}}\:\tableau{1&1\\2\\5}\:\xRightarrow{S}\:\tableau{2&6\\5\\6}\,.\]

(2) $\delta=(4,4,2,2,2,2)$.

\[\tableau{{}&{}&1&1&1\\{}&2&2&2\\{}&3&3\\{\mathbf{1}}&4&4\\2&5&5\\6&6}\:\xRightarrow{\rm{companion}}\:\tableau{1&1&1&4\\2&2&2&5\\3&3\\4&4\\5&5\\6&6}\xRightarrow{R\mbox{-}\rm{matrix}}\:\tableau{1&3\\4\\5}\:\xRightarrow{S}\:\tableau{2&6\\3\\\mathbf{4}}\,.
\]

(3) $\delta=(4,4,3,3,1,1)$.

\[\tableau{{}&{}&1&1&1\\{}&1&2&2\\{}&2&3\\{2}&3&4\\{\mathbf{3}}&4&5\\4&6}\:\xRightarrow{\rm{companion}}\:\tableau{1&1&1&2\\2&2&3&4\\3&4&5\\4&5&6\\5\\6}\xRightarrow{R\mbox{-}\rm{matrix}}\:\tableau{1&5\\5\\6}\:\xRightarrow{S}\:\tableau{1&2\\\mathbf{2}\\6}\,.
\]

Note that in case (1) the first tableau is a Sundaram-LR tableau, while the last one satisfies condition (C3) mentioned above. However, both of these properties fail in cases (2) and (3); the entries causing these failures are shown in bold. 
}
\end{example} 

\begin{remark} {\rm Kwon's rule also works in orthogonal types, whereas there is no Sundaram-type rule in this case. For symplectic types, there is also the rule conjectured by Naito-Sagaki \cite{NS}, which was proved via its9 relation to the Sundaram rule in \cite{ST}.}\end{remark}


\begin{thebibliography}{99}                                                                                        
\bibitem{acmlte}
O. Azenhas, A. Conflitti, and R. Mamede.
\newblock Linear time equivalence of Littlewood-Richardson coefficient symmetry maps. 
\newblock In {\em 21st International Conference on Formal Power Series and Algebraic Combinatorics (FPSAC 2009)}, 127--144, Discrete Math. Theor. Comput. Sci. Proc., AK, Assoc. Discrete Math. Theor. Comput. Sci., Nancy, 2009.

\bibitem{aktlr}
O. Azenhas, R. King, and I. Terada.
\newblock The involutive nature of the Littlewood-Richardson commutativity bijection.
\newblock {\tt arXiv:1603.05037}. 
    
\bibitem{bssts}
G. Benkart, F. Sottile, and J. Stroomer.
\newblock Tableau switching: algorithms and applications. 
\newblock {\em J. Combin. Theory Ser. A} 76:11--43, 1996.

\bibitem {bry}{R-K. Brylinski.} {Limits of weight spaces,
Lusztig's }$q${-analogs and fiberings of adjoint orbits}.
{\em J.\ Amer.\ Math.\ Soc.} 2:517--533, 1989.

\bibitem {Cald}{P. Caldero}. {On harmonic elements for
semisimple Lie algebras}. {\em Adv. Math.} 1:73--99, 2001.

\bibitem{fulyt} 
W. Fulton.
\newblock {\em Young tableaux. With applications to representation theory and geometry.}
\newblock London Mathematical Society Student Texts, 35, Cambridge University Press, Cambridge, 1997.

\bibitem {FH}{W.\ Fulton and J. Harris.} \textit{Representation theory}.
Graduate Texts in Mathematics, Springer-Verlag, 1996.

\bibitem {GW}{G. Goodman and N. R Wallach}. {\em Representation theory
and invariants of the classical groups}. Cambridge University Press, 2003.

\bibitem {Hanlon}{P Hanlon. }{On the decomposition of the tensor
algebra of the classical Lie algebras}. {\em Adv. Math.} 56:238--282, 1986.

\bibitem{hasblr}
P. Hanlon and S. Sundaram.
\newblock On a bijection between Littlewood-Richardson fillings of conjugate shape. 
\newblock {\em J. Combin. Theory Ser. A} 60:1--18, 1992. 

\bibitem{hakccc} 
A. Henriques and J. Kamnitzer.
\newblock Crystals and coboundary categories. 
\newblock {\em Duke Math. J.} 132:191--216, 2006. 

\bibitem {hes}{W-H. Hesselink}. {Characters of the nullcone}. {\em Math.\ Ann.} 252:179--182, 1980.

\bibitem{HK}
J. Hong and S.-J. Kang. {\em Introduction to quantum groups and crystal bases}. Graduate Studies in Mathematics, {42}, Amer. Math. Soc., 2000.

\bibitem {HW}{R. Howe, E-C. Tan, and J. Willerbring. }{Stable
branching rules for symmetric pairs}. {\em Trans. Amer. Math. Soc.} {357}:1601--1626, 2005.

\bibitem {Ion}{B. Ion. }{Generalized exponents of small
representations I. } {\em Represent. Theory} {13}:401--426, 2009.

\bibitem {Ion2}{B. Ion. }{Generalized exponents of small
representations II. } {\em Represent. Theory} {15}:433--493, 2011.

\bibitem{katccd} 
J. Kamnitzer and P. Tingley.
\newblock The crystal commutor and Drinfeld's unitarized $R$-matrix. 
\newblock {\em J. Algebraic Combin.} 29:315--335, 2009. 

\bibitem{kash} 
M. Kashiwara.
\newblock On crystal bases. 
\newblock {\em CMS Conf. Proc.} {16}:155--197, 1995.

\bibitem {King}R. C. King. {Weight multiplicities for the classical
groups}. {\em Lectures Notes in Phys.} {50}:490--499, 1976.

\bibitem {K}{K. Koike and I. Terada}. {\ Young
diagrammatic methods for the representation theory of the classical groups of
type }$B_{n},C_{n}$ and $D_{n}.$ {\em J. Algebra} {107}:466--511, 1987.

\bibitem {KT}{K. Koike and I. Terada}. {Young
diagrammatic methods for the restriction of representations of complex
classical Lie groups to reductive subgroups of maximal rank}. {\em Adv. Math.} {79}:104--135, 1990.

\bibitem {kost}{B.\ Kostant}. {Lie groups representations on
polynomial rings}. {\em Amer. J.\ Math.} {85}:327--404, 1963.

\bibitem{kwoces}
J.-H. Kwon.
\newblock Combinatorial extension of stable branching rules for classical groups.
\newblock {\em Trans. Amer. Math. Soc.} {\tt DOI:10.1090/tran/7104}, 2017.

\bibitem{kwoldk}
J.-H. Kwon.
\newblock Lusztig data of Kashiwara-Nakashima tableaux in types $B$ and $C$.
\newblock {\tt arXiv:1610.02640}.

\bibitem {Li}{D.-E. Littlewood}. {\em The theory of group characters
and matrix representations of groups. }Oxford University Press, second edition, 1958.

\bibitem{lasdcg}
A.~Lascoux. Double crystal graphs. In {\em Studies in memory of Issai Schur (Chevaleret/Rehovot, 2000)},
  volume 210 of {\em Progr. Math.}, pages 95--114. Birkh\"auser Boston, Boston,
  MA, 2003.

\bibitem {LS}{A. Lascoux and M-P. Sch\"{u}tzenberger. }{Le
mono}$\mathit{\ddot{\imath}}${de plaxique}. In {\em Non-commutative
Structures in Algebra and Geometric Combinatorics}. A. de Luca Ed., {Quaderni
della Ricerca Scientifica del C.N.R.}, Roma, 1981.

\bibitem {LSc1}{A. Lascoux and M-P. Sch\"{u}tzenberger. }{Sur une
conjecture de H.O Foulkes}. {\em C.R. Acad. Sci. Paris} {288}:95--98, 1979.

\bibitem {LLT}{A. Lascoux, B. Leclerc, and J-Y.\ Thibon. }{Crystal
graphs and }$q${-analogue of weight multiplicities for the root system
}$A_{n}.$ {\em Lett. Math. Phys.} {35}:359--374, 1995.

\bibitem {lec}{C.\ Lecouvey}. {Kostka-Foulkes polynomials, 
cyclage graphs and charge statistic for the root system }$C_{n}.$ {\em J. Algebraic Combin.} {21}:203--240, 2005.

\bibitem {Lec2}{C.\ Lecouvey}. {Combinatorics of crystal graphs
and Kostka-Foulkes polynomials for the root systems $B_{n}$, $C_{n}$, and
$D_{n}$}. {\em European J. Combin.} {27}:526--557, 2006.

\bibitem {Lec4}{C.\ Lecouvey}.{\ Stabilization of the Brylinsky
filtration and limit of Lusztig }$q$-analogues. {\em J. Algebraic Combin.} {27}:451--477, 2008.

\bibitem {LecShi}{C. Lecouvey and M. Shimozono. }{Lusztig's
$q$-analogue of weight multiplicity and one-dimensional sums for affine root
systems. }{\em Adv. Math.} {208}:438--466, 2007.

\bibitem {Lut}{G. Lusztig}. {Singularities, character formulas,
and a }$q${-analog of weight multiplicities}. Analyse et topologie sur
les espaces singuliers (II-III), {\em Ast\'{e}risque} {101-102}:208--227, 1983.

\bibitem {mac}{I-G. Macdonald}. \textit{Symmetric functions and Hall
polynomials}. Oxford Mathematical Monograph, second edition, Oxford University
Press, New York, 1995.

\bibitem {Mor}{A-O. Morris}. {The characters of the group
}$GL(n,q).$ {\em Math. Z.} {81}:112--123, 1963.

\bibitem {NS}{S. Naito and D. Sagaki}. {An approach to the
branching rule from }$\mathfrak{sl}_{2n}(\mathbb{C})$ to $\mathfrak{sp}%
_{2n}(\mathbb{C})$ via Littelmann's path model. {\em J. Algebra}
{286}:187--212, 2005.

\bibitem {NY}{A. Nakayashiki and Y. Yamada}. {Kostka-Foulkes
polynomials and energy function in sovable lattice models}. {\em Selecta Math. (N.S.)} {3}:547--599, 1997.

\bibitem {NR}{K. Nelsen and A. Ram}. {Kostka-Foulkes polynomials
and Macdonald spherical functions}. In {\em Surveys in Combinatorics}, C. Wensley ed.,\ {London Math.\ Soc.\ Lect.\ Notes}, {307}, Cambridge University
Press, 325--370, 2003.

\bibitem{Sh} J. Sheats. A symplectic jeu de taquin bijection between the tableaux of King and of De Concini. {\em Trans. Amer. Math. Soc.} 351:3569–3607, 1999. 

\bibitem {Sun}{S. Sundaram}. {Tableaux in the representation
theory of classical groups}. In {\em Invariant Theory and Tableaux}, IMA Vol. in Math, 19:191--225, 1990.

\bibitem {ST}{B. Schuman and R. Torres}. {A non Levi branching rule
in terms of Littelmann's path model}. {\tt arXiv:1607.08225}.

\bibitem{whiht}
D. White. 
\newblock Hybrid tableaux and the Littlewood-Richardson rule. 
\newblock {\em Discrete Math.} 80:183--206, 1990.

\end{thebibliography}
\end{document}